
\documentclass[11pt,leqno]{amsart}
\usepackage{amsmath,amsfonts,amssymb,amscd,amsthm,amsbsy,upref}
\textheight=8.6truein
 \textwidth=6truein \hoffset=-.25truein


\numberwithin{equation}{section}
\def\hangbox to #1 #2{\vskip3pt\hangindent #1\noindent \hbox to #1{#2}$\!\!$}


\newtheorem{thm}{Theorem}[section]
\newtheorem*{thmA}{Theorem A}
\newtheorem*{thmB}{Theorem B}
\newtheorem*{thmC}{Theorem C}
\newtheorem{lem}[thm]{Lemma}
\newtheorem{cor}[thm]{Corollary}
\newtheorem{prop}[thm]{Proposition}
\theoremstyle{definition}

\newtheorem{defn}[thm]{Definition}
\newtheorem{defin}[thm]{Definition}

\newtheorem{tsirelson}[thm]{Tsirelson spaces}

\theoremstyle{remark}
\newtheorem{rem}[thm]{Remark}
\newtheorem{rems}[thm]{Remarks}


\def\N{{\mathbb N}}
\def\R{{\mathbb R}}


\def\bE{{\mathbf E}}
\def\bF{{\mathbf F}}


\def\a{{\rm a}}
\def\b{{\rm b}}

\def\cA{{\mathcal A}}
\def\cB{{\mathcal B}}
\def\cC{{\mathcal C}}
\def\cG{{\mathcal G}}
\def\cL{{\mathcal L}}


\newcommand{\ra}{\rangle}
\newcommand{\la}{\langle}


\def\sfrac#1#2{\kern.1em\raise.5ex\hbox{$#1$}
        \kern-.1em/\kern-.05em\lower.25ex\hbox{$#2$}}

\def\ov#1{\overline{#1}}
\def\To{\Rightarrow}
\def\ds{\displaystyle}
\def\vp{\varepsilon}
\def\dim{\operatorname{dim}}

\def\Tac{{T_{\cA,c}}}


\newcommand\kin{\!\in\!}


\newcommand{\sign}{\text{\rm sign}}

\newcommand{\w}{\text{w}}
\newcommand{\fw}{\text{\fw}}

\newcommand{\rk}{\hbox{\text{\rm rk}}}

\newcommand{\supp}{{\rm supp}}
\newcommand{\coo}{c_{00}}

\newcommand{\cuts}{{\rm cuts}}
\newcommand{\dist}{{\rm dist}}
\newcommand{\f}{{\rm f}}
\newcommand{\spa}{{\rm span}}

\def\ran{\operatorname{ran}}


\newcommand{\Deltab}{\overline{\Delta}}
\newcommand{\Gammab}{\overline{\Gamma}}
\newcommand{\gammab}{{\ov{\gamma}}}
\newcommand{\etab}{{\ov{\eta}}}
\newcommand{\xib}{{\ov{\xi}}}
\newcommand{\Rb}{\ov{R}}
\newcommand{\Jb}{\ov{J}}
\newcommand{\Mb}{\ov{M}}
\newcommand{\Fb}{\ov{F}}
\newcommand{\bFb}{\ov{\bF}}


\begin{document}
\allowdisplaybreaks
\title{The universality of $\ell_1$ as a dual space}
\author{D. Freeman, E. Odell\and  Th. Schlumprecht}
\address{Department of Mathematics, Texas A\&M University, College Station,
TX 77843-3368}
\email{freeman@math.tamu.edu}
\address{Department of Mathematics\\ The University of Texas at Austin,\\ Austin, TX 78712-0257}
\email{odell@math.utexas.edu}
\address{Department of Mathematics, Texas A\&M University, College Station,
TX 77843-3368}
\email{schlump@math.tamu.edu}
\thanks{Research of the last two authors was supported by the
National Science Foundation.}
\subjclass[2000]{46B20 }
\keywords{$\cL_\infty$-spaces, Bourgain-Delbaen spaces, embedding into FDDs}


\begin{abstract}
Let $X$ be a Banach space with a separable dual.
We prove that $X$ embeds isomorphically into a $\cL_\infty$ space $Z$
whose dual is isomorphic to $\ell_1$.
If, moreover, $U$ is a space so that $U$ and  $X$  are totally incomparable, then we construct such a $Z$, 
 so that $Z$ and $U$ are totally incomparable.
If $X$ is separable and reflexive, we show that $Z$ can be made to be
somewhat reflexive.
\end{abstract}
\maketitle

\section{Introduction}\label{S:1}

In 1980 J.~Bourgain and F.~Delbaen \cite{BD} showed the surprising
diversity of $\cL_\infty$ Banach spaces whose duals are isomorphic to $\ell_1$
by constructing such a space $Z$ not containing an isomorph of $c_0$.
Moreover, $Z$ is {\em somewhat reflexive\/}, i.e., every infinite
dimensional subspace of $Z$ contains an infinite dimensional reflexive
subspace.
In fact, R.~Haydon \cite{H} proved the reflexive subspaces could be
chosen to be isomorphic to $\ell_p$ spaces.

The structure of Banach spaces $X$ whose dual is isometric to
$\ell_1$ is more limited. Such a space $X$ must contain $c_0$
\cite{Z1} and in fact be an isometric quotient of $C(\Delta)$
\cite{JZ1}. Finally it was shown in  \cite{F}  that such spaces must
be $c_0$ saturated. Nevertheless, such a space need not be an
isometric quotient of some $C(\alpha)$, for $\alpha<\omega_1$
\cite{A1}.

The construction developed by Bourgain and Delbaen is quite general
and allows for additional modifications. Very recently S.~Argyros
and R.~Haydon \cite{AH} were able to adapt this construction to
solve the famous {\em Scalar plus Compact Problem} by building an
infinite dimensional Banach space, with dual isomorphic to $\ell_1$,
on which all operators
 are a compact perturbation of  a multiple of the identity.
In this paper we will prove three main theorems concerning
isomorphic preduals of $\ell_1$.

\begin{thmA}
Let $X$ be a Banach space with separable dual. Then $X$ embeds into
a $\cL_\infty$ space $Y$ with $Y^*$ isomorphic to $\ell_1$.
\end{thmA}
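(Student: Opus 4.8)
The plan is to combine two ingredients: a classical reduction to a space with a shrinking finite-dimensional decomposition, and a Bourgain--Delbaen construction \cite{BD} (of the type used by Argyros and Haydon \cite{AH}) built around that decomposition. Since $X^*$ is separable, the embedding theorems of Johnson--Rosenthal, respectively Zippin, allow us to assume, after enlarging $X$, that $X$ has a shrinking FDD $\mathbf{E}=(E_n)$ with FDD-projections of norm at most $K$; it then suffices to embed this larger space isomorphically into a $\cL_\infty$ space $Y$ with $Y^*$ isomorphic to $\ell_1$. After passing to a blocking of $\mathbf{E}$ if necessary, I fix a countable subset $D$ of $c_{00}(\oplus_n E_n^*)$, norm-dense in every finite sum $\sum_{k\le n}E_k^*$ and (up to small perturbations) closed under the dual FDD-projections. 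The role of the shrinking hypothesis is that $(E_n^*)$ is then an FDD for $X^*$, so $\bigcup_n\sum_{k\le n}E_k^*$, and hence $D$, norms $X$.

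Next I would build a Bourgain--Delbaen index set $\Gamma=\bigcup_n\Delta_n$ in stages, with $\Delta_n$ the elements of rank $n$. An element $\gamma\in\Delta_{n+1}$ is a tuple recording a rank $n+1$, a weight $\theta$ and an age, an earlier element $\xi$ of rank $k\le n$, and a functional $b^*\in D$ supported in the coordinate interval $(k,n]$; to $\gamma$ one attaches the functional $c_\gamma^*=e_\xi^*+\theta\,T^*_{k,n}b^*$, where $T^*_{k,n}$ transports $b^*$ to a functional on $\ell_\infty(\Delta_1\cup\dots\cup\Delta_n)$ compatibly with the extension maps. The extension operators $i_{m,n}$ are then defined by the usual recursion (the $\gamma$-coordinate of an extension is $c_\gamma^*$ applied to the already-constructed part), and $Y$ is the closed linear span in $\ell_\infty(\Gamma)$ of the Bourgain--Delbaen vectors $d_\gamma$. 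The weights $\theta$ and the lengths of the chains of $\xi$'s must be controlled so that the extra variation contributed by the $T^*_{k,n}b^*$ terms stays uniformly summable across ranks.

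Granting that the $i_{m,n}$ are uniformly bounded, the standard Bourgain--Delbaen lemmas \cite{BD} then yield at once that $Y$ is a $\cL_\infty$ space and that the difference functionals $d_\gamma^*=e_\gamma^*-c_\gamma^*$ form a Schauder basis of $Y^*$ equivalent to the unit vector basis of $\ell_1(\Gamma)$; thus both structural requirements on $Y$ hold. Establishing the uniform bound on the $i_{m,n}$ is the analytic core: for $x$ in the unit ball of $\ell_\infty(\Delta_1\cup\dots\cup\Delta_m)$ and $\gamma$ of rank larger than $m$, one must estimate the telescoping contribution of the $\theta\,T^*_{k,n}b^*$ terms along the chain of $\xi$'s defining $\gamma$, and this is exactly where the bounds on $\theta$, on chain length, and (if needed) an extra attenuation built into the $b^*$'s are used.

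Finally I would define the embedding $J\colon X\to Y$ so that for $\gamma\in\Delta_n$ the coordinate $(Jw)(\gamma)=e_\gamma^*(Jw)$ equals, up to the corrections forced by the recursion, the value at $w$ of the dual-FDD functional in $\sum_{k\le n}E_k^*$ coded by $\gamma$; the $e_\xi^*$ summand in $c_\gamma^*$ is precisely what makes these prescriptions compatible across ranks, and one checks that $Jw$ indeed lies in $Y$. Boundedness of $J$ is once more the $\cL_\infty$ estimate, each $(Jw)(\gamma)$ being a uniformly bounded functional of $w$. For the lower bound, given $w$ with $\|w\|=1$ one picks $w^*\in\bigcup_n\sum_{k\le n}E_k^*$ of norm one with $w^*(w)>1-\vp$, approximates $w^*$ within $D$, and uses the construction to produce a $\gamma$ whose coded functional is close to this $w^*$; then $\|Jw\|_\infty\ge|(Jw)(\gamma)|>1-2\vp$, so $J$ is an isomorphic embedding and the proof is complete. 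I expect the main obstacle to be the simultaneous balancing of these three stages: the lower estimate on $J$ forces faithful copies of a norming set of dual-FDD functionals into the $c_\gamma^*$, while the $\cL_\infty$ property and the $\ell_1$-duality demand that the cumulative effect of those functionals over all ranks stay uniformly small; it is the shrinking of $\mathbf{E}$ that renders the norming requirement compatible with the attenuation one is forced to impose.
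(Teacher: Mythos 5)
Your overall architecture mirrors the paper's: reduce via Zippin to a space with a shrinking FDD $\bE$, fix a countable norming set $D\subset c_{00}(\oplus E_n^*)$, build a Bourgain--Delbaen index set $\Gamma$ of ``aged'' tuples coding functionals $c_\gamma^*=e_\xi^*+\theta\,T^*_{k,n}b^*$ with $b^*\in D$, and define the embedding coordinatewise. The genuine gap is your claim that ``the standard Bourgain--Delbaen lemmas then yield at once that \ldots the difference functionals $d_\gamma^*$ form a Schauder basis of $Y^*$ equivalent to the unit vector basis of $\ell_1(\Gamma)$.'' What the boundedness estimate on the extension operators (controlling the weights $\theta$) actually gives is that $(d_\gamma^*)_{\gamma\in\Gamma}$ is a basis for $\ell_1(\Gamma)$ and that $Y\subset\ell_\infty(\Gamma)$ is a $\cL_\infty$ space with an FDD $\bF=(F_n)$; the closed span of the $d_\gamma^*$'s is the canonical subspace $Y^{(*)}\subseteq Y^*$, and one has $Y^{(*)}=Y^*$ (hence $Y^*\cong\ell_1$) \emph{only if $\bF$ is a shrinking FDD for $Y$}. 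That is not automatic: running the same construction with a non-shrinking input FDD $\bE$ produces a BD space whose dual is strictly larger than $\ell_1(\Gamma)$, and if $Y^*\cong\ell_1$ were formal there would be nothing left to prove.

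Establishing that $\bF$ is shrinking is the crux, and your proposal offers no argument for it. Your suggestion to ``control the lengths of the chains of $\xi$'s'' is aimed at the $\cL_\infty$ bound (the condition $C<\infty$ of Proposition \ref{P:1.4}) and does not yield shrinking; if you literally bounded chain lengths uniformly you would also destroy the norming property of the coded functionals. The paper's solution is to restrict the admissible chains so that the analyses of the $\gamma$'s code \emph{optimal $c$-decompositions} of elements of a carefully built norming set (Lemma \ref{L:4.1}); then the family of cuts $\{\cuts(\gamma):\gamma\in\Gamma\}$ is compact in $[\N]^{<\omega}$ even though individual chains have unbounded length, and via Theorem \ref{P:bimonotone} and Corollary \ref{C:3.7} this compactness is precisely equivalent to $\bF$ being shrinking, which in turn follows from $\bE$ being shrinking. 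That chain of implications --- shrinking $\bE$ $\Rightarrow$ compactness of the $c$-decomposition supports $\Rightarrow$ compactness of $\cuts(\Gamma)$ $\Rightarrow$ shrinking $\bF$ $\Rightarrow$ $Y^*\cong\ell_1$ --- is exactly what your write-up is missing.
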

Moreover, we have the following refinements of Theorem A.
\begin{thmB}
Let $X$ and $U$   be totally incomparable infinite dimensional
Banach spaces with separable duals. Then $X$ embeds into a
$\cL_\infty$ space $Z$ whose dual is isomorphic to $\ell_1$, so that
$Z$ and $U$ are totally incomparable.
\end{thmB}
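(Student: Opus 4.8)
The plan is to rerun the Bourgain--Delbaen-type construction underlying the proof of Theorem~A, but to feed it a carefully chosen input space with a shrinking FDD and to tune its combinatorial parameters in Tsirelson fashion, so that the only new infinite dimensional subspaces it produces have a prescribed, high complexity that $U$ cannot match.

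\emph{Step 1: reduction to a shrinking FDD.} I would first pass from $X$ to a Banach space $W$ with a shrinking FDD $(F_n)$ into which $X$ embeds. Using Zippin's embedding theorem together with the blocking techniques that enter the proof of Theorem~A, one can arrange in addition that every normalized weakly null sequence in $W$ has a subsequence equivalent to a weakly null sequence in $X$; in particular every infinite dimensional subspace of $W$ contains a further subspace isomorphic to a subspace of $X$. Since $X$ and $U$ are totally incomparable, so then are $W$ and $U$, and it suffices to embed $W$ into a $\cL_\infty$ space $Z$ with $Z^{*}$ isomorphic to $\ell_{1}$ and with $Z$ totally incomparable with $U$.

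\emph{Step 2: choosing the parameter and running the construction.} Since $U^{*}$ is separable, $\operatorname{Sz}(U)$ is a countable ordinal. Choose a Schreier-type family $\cA$ and a constant $0<c<1$ so that the associated Tsirelson space $\Tac$ --- which is reflexive and has a $1$-unconditional shrinking basis --- has the property that every infinite dimensional subspace of $\Tac$ contains a further block subspace of Szlenk index exceeding $\operatorname{Sz}(U)$; such $\cA$ exists because, by taking the order of the Schreier family large enough, the Szlenk indices of block subspaces of the higher order Tsirelson spaces can be pushed past any prescribed countable ordinal. It follows that $\Tac$ and $U$ are totally incomparable. Now run the construction of Theorem~A over the FDD $(F_n)$ of $W$, but admitting at each stage only those Bourgain--Delbaen functionals whose supports are $\cA$-admissible and whose coefficient is governed by $c$; this amounts to Tsirelson-izing the weights of the construction. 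The outcome is an $\cL_\infty$ space $Z$ with $Z^{*}$ isomorphic to $\ell_{1}$, containing $W$ and hence $X$, and carrying a Bourgain--Delbaen finite dimensional decomposition.

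\emph{Step 3: the structural dichotomy (the hard part).} The core of the argument is the assertion that every infinite dimensional subspace $V$ of $Z$ contains a further subspace isomorphic to a subspace of $W$ or to a subspace of $\Tac$. Granting this, the theorem follows: if some infinite dimensional $V\subseteq Z$ were isomorphic to a subspace of $U$, it would contain an infinite dimensional $V'$ embedding simultaneously into $U$ and into one of $W$, $\Tac$, contradicting that each of the latter is totally incomparable with $U$; this also rules out $U$ itself embedding into $Z$. To prove the assertion, a standard small perturbation lets one replace $V$ by a normalized block sequence $(z_k)$ of the Bourgain--Delbaen decomposition of $Z$. One then establishes a dichotomy for such block sequences: after passing to a subsequence, either the $z_k$ are essentially supported on the $W$-coordinates of the construction, in which case $[z_k]$ is isomorphic to a block subspace of $W$; or the norm on $[z_k]$ is equivalent, both from above and from below, to the $\cA$-Tsirelson norm, in which case $[z_k]$ is isomorphic to a subspace of $\Tac$. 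The lower $\cA$-Tsirelson estimate comes from the $\cA$-admissibility constraint imposed during the construction, while the upper estimate is forced by the $\cL_\infty$/$\ell_{1}$ structure of $Z$, in particular by the fact that $\ell_{1}$ does not embed into $Z$. The main obstacle is precisely this last dichotomy: obtaining the two-sided $\cA$-Tsirelson estimate and excluding genuinely mixed block sequences requires carrying the combinatorics of the Bourgain--Delbaen elements and the Schreier bookkeeping through along the lines of the proof of Theorem~A, but now under the additional $\cA$-constraint, with the reflexivity and unconditionality of $\Tac$ being what lets these estimates close up.
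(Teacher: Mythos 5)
Your proposal diverges from the paper's proof at two points where I think the argument genuinely breaks, and in a way that obscures the device that actually makes the proof work.

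\emph{Step 1 is not achievable.} You assert that one can embed $X$ into a space $W$ with a shrinking FDD so that, in addition, every normalized weakly null sequence in $W$ has a subsequence equivalent to one in $X$ --- and hence every infinite dimensional subspace of $W$ contains a subspace of $X$. Zippin's theorem and the blocking used in the proof of Theorem~A give no such control: the FDD one introduces to contain $X$ will in general admit block subspaces that are not close to $X$ and bear no relation to it (for the most naive example, replacing $W$ by $W\oplus c_0$ already destroys such a property, and nothing in the embedding construction prevents these ``external'' block subspaces from appearing). The paper does not attempt this; instead it takes the Bourgain--Delbaen space $Y\supseteq X$ from Theorem~A --- which does have extraneous subspaces --- and \emph{augments} its Bourgain--Delbaen data with new elements of $\Gamma$ designed to make any block sequence at positive distance from the copy of $X$ dominate a subsequence of a Tsirelson basis (Theorem~\ref{T:6.1}). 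The dichotomy ``subsequence equivalent to a sequence in $X$, or subsequence at positive distance from $\psi(X)$'' is then just the trivial one, and the content lies entirely in the second branch.

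\emph{Step 3 claims a two-sided Tsirelson estimate, and the proposed source of the upper bound is wrong.} You want block sequences that are ``far from $X$'' to be \emph{equivalent} to block sequences of $T_{\cA,c}$, with the upper $T_{\cA,c}$-estimate ``forced by the fact that $\ell_1$ does not embed into $Z$.'' Non-embedding of $\ell_1$ is far too weak to yield any Tsirelson upper estimate (e.g.\ $c_0$ has $\ell_1$ dual and no such estimate), and in fact nothing in the construction controls these block sequences from above by a Tsirelson norm. The paper never needs an upper estimate on the new block sequences. It produces only a \emph{lower} Tsirelson estimate (domination of a subsequence of the $T_{c,\beta}$-basis), and gets the contradicting \emph{upper} estimate from $U$ itself: by Theorem~\ref{thm:FOSZ}, $U$ embeds into a space with subsequential $T_{c,\alpha}$-upper estimates for some $\alpha<\beta$, so a sequence in $Z$ equivalent to one in $U$ would be dominated by a $T_{c,\alpha}$-basis subsequence while dominating a $T_{c,\beta}$-basis subsequence, which is impossible. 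This asymmetric use of upper/lower estimates --- lower for the new directions in $Z$, upper for $U$ --- is the key idea, and it makes the strong ``every subspace of $Z$ embeds into $W$ or into $T_{\cA,c}$'' dichotomy unnecessary (and likely unprovable in general). I'd encourage you to recast Step~3 around the one-sided estimate, which brings the argument in line with Theorem~\ref{T:6.1}(b).
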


\begin{thmC}
Let $X$ be a separable reflexive Banach space. Then $X$ embeds into
a somewhat reflexive $\cL_\infty$ space $Z$, whose dual is
isomorphic to $\ell_1$.  Furthermore, if $U$ is a Banach space with
separable dual such that $X$ and $U$ are totally incomparable, then
$Z$ can be chosen to be totally incomparable with $U$.
\end{thmC}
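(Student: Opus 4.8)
The plan is to build $Z$ by a Bourgain--Delbaen construction whose interior norming functionals are organised along a Tsirelson space $\Talpha$ rather than along $c_0$; it is this Tsirelson structure, together with the reflexivity of $X$, that will force $Z$ to be somewhat reflexive. First I would reduce to the case that $X$ itself has an FDD: by Zippin's theorem $X$ embeds isomorphically into a reflexive space $W$ with a bimonotone FDD $(F_n)$ (which, $W$ being reflexive, is shrinking and boundedly complete), and a solution of the theorem for $W$ yields one for $X$ since $X\hookrightarrow W\hookrightarrow Z$. For the ``furthermore'' clause I would use, in addition, the refinement --- established exactly as in the proof of Theorem~B --- that when $X$ and $U$ are totally incomparable, $W$ can be chosen totally incomparable with $U$ as well. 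So from now on assume $X=W$ has a bimonotone reflexive FDD $(F_n)$, and fix a countable ordinal $\alpha$, to be specified at the end, together with the Tsirelson space $\Talpha$ and its unit vector basis $(t_k)$.

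Next, following the Argyros--Haydon form of the Bourgain--Delbaen scheme, I would construct inductively a set of nodes $\Gamma=\bigcup_n\Delta_n$, with $\Gamma_n=\bigcup_{k\le n}\Delta_k$, together with extension operators $i_{m,n}\colon\ell_\infty(\Gamma_m)\to\ell_\infty(\Gamma_n)$, so that the associated Bourgain--Delbaen space $Z\subseteq\ell_\infty(\Gamma)$ is a $\cL_{\infty,1+\vp}$-space with a Schauder decomposition $(M_n)$ ($M_n$ spanned by the coordinates in $\Delta_n$) and $Z^*=[\,e_\gamma^*:\gamma\in\Gamma\,]\cong\ell_1(\Gamma)\cong\ell_1$. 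Two features would be built into the recursion. First, for $\gamma\in\Delta_{n+1}$ the functional $e_\gamma^*$ will split as a functional of ``Tsirelson type of order $\alpha$'' supported on $\Gamma_n$ plus the difference $d_\gamma^*$; the standard estimates then give, for every block sequence $(z_k)$ of $(M_n)$, an upper bound $\|\sum a_kz_k\|_Z\le C\|\sum a_kt_k\|_{\Talpha}$, while the freedom in choosing norming nodes gives a matching lower bound $\|\sum a_kz_k\|_Z\ge c\|\sum a_kt_k\|_{\Talpha}$ for block sequences that increase rapidly enough. Secondly, the FDD $(F_n)$ of $W$ will be threaded through the $\Delta_n$'s so that a suitable block subspace of $(M_n)$ is isomorphic to $W$ and contains a copy of $X$; in particular the coordinate projections of $Z$ onto the $F$-directions are uniformly bounded, and any block sequence of $(M_n)$ whose $F$-coordinates are bounded away from zero will admit a lower estimate by the $W$-norm.

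Granting this construction, the remaining arguments are routine. Note first that $(M_n)$ is shrinking, since $Z^*\cong\ell_1(\Gamma)=\overline{\bigoplus_n\ell_1(\Delta_n)}$. Let $Y\subseteq Z$ be infinite dimensional; after a Bessaga--Pe\l czy\'nski perturbation we may assume $Y$ contains a normalized block sequence $(z_k)$ of $(M_n)$. If, along a subsequence, the $F$-coordinates of the $z_k$ stay bounded away from zero, the lower $W$-estimate produces a further subsequence equivalent to a block sequence of $(F_n)$, whose closed span is reflexive, being a block subspace of the reflexive space $W$. Otherwise the $F$-coordinates tend to zero, so after a perturbation $(z_k)$ is a block sequence of the complementary (Tsirelson) directions; passing to a rapidly increasing subsequence and using the two-sided Tsirelson estimates, this subsequence is equivalent to a block basis of $\Talpha$, whose span is reflexive since $\Talpha$ is reflexive. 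Either way $Y$ contains an infinite dimensional reflexive subspace, so $Z$ is somewhat reflexive (in fact reflexively saturated). For the furthermore clause, choose $\alpha$ so large that no infinite dimensional subspace of $\Talpha$ embeds into $U$ --- possible because the relevant ordinal index of the fixed space $U$ is a countable ordinal, while that index of every infinite dimensional subspace of $\Talpha$ exceeds it once $\alpha$ is large --- and recall that $W$ was taken totally incomparable with $U$. Then the reflexive subspace produced above embeds into $W$ or into $\Talpha$; the latter is impossible, so it embeds into $W$, hence not into $U$. Thus no infinite dimensional subspace of $Z$ embeds into $U$, i.e.\ $Z$ and $U$ are totally incomparable.

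The main obstacle is the construction itself: one must run the Bourgain--Delbaen bookkeeping --- keeping the $\cL_{\infty,1+\vp}$ property and the $\ell_1$ identification of $Z^*$ --- while simultaneously embedding the reflexive FDD $(F_n)$ and keeping all interior functionals of Tsirelson type of order $\alpha$, and then extract from the construction the two-sided block estimates (by $W$ and by $\Talpha$) that drive the dichotomy above. The two subsidiary difficulties are the refinement of Zippin's embedding that preserves total incomparability with $U$, and pinning down the right ordinal invariant of $U$ against which to calibrate $\alpha$.
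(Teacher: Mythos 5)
Your high-level plan resembles the paper's — reduce to a reflexive space with an FDD, thread that FDD through a Bourgain--Delbaen construction along with Tsirelson-type auxiliary structure, and then run a dichotomy on block sequences to show every subspace has a reflexive subspace. But the key technical content is precisely the construction you defer, and in a few places where you \emph{do} commit to a claim, the claim does not match what the paper actually establishes.

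First, the dichotomy you propose — ``$F$-coordinates of $(z_k)$ bounded away from zero'' versus ``$F$-coordinates tend to zero'' — is not the right one and does not give what you assert. A block sequence whose $F$-coordinates are bounded below by $\delta>0$ need not be equivalent to a block sequence of $(F_n)$; the orthogonal (Tsirelson-direction) components may be large as well, and nothing forces equivalence with anything in $W$. The paper's dichotomy in Theorem~\ref{T:6.1} is instead stated in terms of $\inf_n\dist(z_n,\psi(X))$: either this infimum is $0$, in which case a perturbation argument yields a subsequence equivalent to a sequence \emph{in} $\psi(X)$, or it is $>0$, in which case the augmentation gives a lower Tsirelson estimate. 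The two conditions are genuinely different, and the paper's version is the one that drives the proof.

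Second, you claim two-sided block estimates by $\Talpha$ (an upper estimate for all blocks plus a matching lower estimate for rapidly increasing blocks), and then conclude that Tsirelson-direction block sequences are ``equivalent to a block basis of $\Talpha$.'' The paper establishes neither a $\Talpha$-upper estimate for all blocks of $\bFb$ nor equivalence with a block basis of $\Talpha$; what Theorem~\ref{T:6.1}(b)/(c) gives is a \emph{one-sided lower} estimate, and moreover against the \emph{dual} Tsirelson basis $(v_j)$ of $\Talphas$, not $\Talpha$. This choice is not cosmetic: the basis of $\Talphas$ is boundedly complete, and the reflexivity of the far-from-$X$ case comes from the combination ``$(z_n)$ is shrinking (block of a shrinking FDD) and \emph{every} normalized block basis of $(z_n)$ dominates a subsequence of $(v_j)$, hence $(z_n)$ is boundedly complete.'' That quantifier ``every normalized block basis'' (the ``Moreover'' clause, coming from Theorem~\ref{T:6.1}(c)) is essential and is absent from your argument; without it, dominating a single boundedly complete sequence does not give bounded completeness.

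Third, the reduction step: you invoke Zippin to embed $X$ in a reflexive space $W$ with an FDD, whereas the paper uses the quantified reflexive embedding of \cite[Theorem A]{OSZ2} (Theorem~\ref{thm:OSZ2}) to get an FDD with both $\Talpha$-upper and $\Talphas$-lower estimates. This fixes a single ordinal $\alpha$ to feed into the Tsirelson augmentation, and is part of how the two sides of the construction are calibrated; Zippin alone does not supply it. Finally, for the ``furthermore'' clause, your sketch assumes that any reflexive subspace coming out of the dichotomy ``embeds into $W$ or into $\Talpha$,'' but the lower-estimate dichotomy only yields domination, not an isomorphic embedding into a Tsirelson space; the correct incomparability argument (as in the proof of Theorem~B) is a domination-versus-Szlenk-index contradiction, not a subspace classification.

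In short: the architecture is right, but the dichotomy is misstated, the Tsirelson estimates are claimed in the wrong direction and with the wrong strength, the bounded-completeness step is missing the quantifier that makes it work, and the main construction — the augmentation of the Bourgain--Delbaen sets that makes all of this compatible — is left entirely unaddressed.
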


We recall that $X$ and $U$ are called totally incomparable if no
infinite dimensional Banach space embeds into both $X$ and $U$.

Since there are reflexive spaces of arbitrarily high countable
Szlenk index \cite{Sz} Theorem~B (with $U=c_0$) as well as Theorem C
solve a question of Alspach \cite[Question 5.1]{A2} who asked
whether or not there are $\cL_\infty$ spaces with arbitrarily high
Szlenk index not containing $c_0$.  Moreover Alspach, in conference
talks, asked whether Theorem~A could be true.  Furthermore, Theorem
$B$ with $U=c_0$ solves the longstanding open problem of showing
that if $X^*$ is separable and $X$ does not contain an isomorph of
$c_0$, then $X$ embeds into a Banach space with a shrinking basis
which does not contain an isomorph of $c_0$.

In Section \ref{S:2} we review the skeletal aspects of the
Bourgain-Delbaen construction of $\cL_\infty$ spaces, following more
or less, \cite{AH}. Theorem~A  will be proved in Section \ref{S:4},
while  the proofs of Theorems~B and C are presented in Section
\ref{S:5}. The construction used to prove Theorem~A will also be
useful in the case where $X^*$ is not separable. The construction
proving Theorems B and C will be an {\em augmentation} of that used
to prove Theorem~A.

Section \ref{S:3} contains background material necessary for our
proof. We review some embedding theorems from \cite{OSZ2} and
\cite{FOSZ} that play a role in the subsequent constructions.
Terminology and definitions are given along with some propositions
that facilitate their use.  In particular, we define the notion of a
$c$-decomposition and relate it to an FDD being shrinking
(Proposition \ref{P:bimonotone}).  This will be used to show that
our $\cL_\infty$ constructs have dual isomorphic to $\ell_1$.  We
also show how Theorem \ref{P:bimonotone} leads to an alternate
and self contained proof of a less precise version of embedding
Theorems \ref{thm:FOSZ} and \ref{thm:OSZ2}, which is sufficient for
their use in this paper.

We use standard Banach space terminology as may be found in
\cite{JL} or \cite{LT}. We recall that $X$ is $\cL_\infty$ if there
exist $\lambda \!<\!\infty$ and finite dimensional subspaces $E_1
\subseteq E_2 \subseteq \cdots$ of $X$ so that $X =
\overline{\bigcup_{n=1}^\infty} E_n$ and the Banach-Mazur distance
satisfies
$$d(E_n, \ell_\infty^{\dim (E_n)}) \!\le\! \lambda\ ,\ \ \text{ for all }\ \
n\!\in\! \N\ .$$ In this case we say $X$ is $\cL_{\infty,\lambda}$.
 $S_X$ and $B_X$ denote the unit sphere and unit ball of $X$,
respectively. A sequence of finite dimensional subspaces of $X$,
$(E_i)_{i=1}^\infty$ is an FDD (finite dimensional decomposition) if
every $x\in X$ can be uniquely expressed as $x=\sum_{i=1}^\infty
x_i$ where $x_i\in F_i$ for all $i\in\N$.  It is usually required
that $E_i\neq\{0\}$ for all $i\in\N$ for $(E_i)_{i=1}^\infty$ to be
a finite dimensional decomposition, but it will be convenient for us
to allow $E_i=\{0\}$ for some $i$'s in Section \ref{S:5}

We note that there are deep constructions of $\mathcal{L}_\infty$
spaces other then the ones in \cite{BD}. For example Bourgain and
Pisier \cite{BP} prove that every separable Banach space $X$ embeds
into a $\mathcal L_\infty$ space $Y$ so that $Y/X$ is a Schur space
with the Radon Nikodym Property. P.~Dodos \cite {D} used the
Bourgain-Pisier construction to prove that for every $\lambda>1$
there exists a class $(Y^\xi_\lambda)_{\xi<\omega_1}$ of separable
$\cL_{\infty,\lambda}$ spaces with the following properties. Each
$Y^\xi_\lambda$ is non-universal (i.e. $C[0,1]$ does not embed into
$Y^\xi_\lambda$) and
 if $X$ is separable with $\phi_{NU}(X)\le \xi$, then $X$ embeds into $Y^\lambda_\xi$. Here $\phi_{NU}$ is
 Bourgain's ordinal index based on the Schauder basis for $C[0,1]$.
 Now $C[0,1]$ is a $\cL_\infty$-space and is universal for the class of separable Banach spaces. Theorem~A yields that the class of $\cL_\infty$-spaces with separable dual is universal for the class of all
 Banach spaces with separable dual.

We thank the referee for providing very useful suggestions, which
simplified and expanded some results in our original version.

\section{Framework of the Bourgain-Delbaen construction}\label{S:2}

As promised, this section contains the general framework of the construction of {\em Bourgain-Delbaen spaces}.
This framework is general enough to include the original space of Bourgain and  Delbaen \cite{BD},
 the spaces constructed in \cite{AH},  as well as the spaces constructed in this paper.
We follow, with slight changes and some notational differences, the presentation in
 \cite{AH} and start by introducing  {\em Bourgain-Delbaen sets}.

 \begin{defin}\label{D:1.1} (Bourgain-Delbaen-sets)
  A sequence of finite sets $(\Delta_n:n\kin\N)$ is called a {\em Sequence of Bourgain-Delbaen Sets} if it satisfies the following
  recursive conditions:

 $\Delta_1$ is any finite set, and assuming that for some $n\kin\N$ the sets $\Delta_1$, $\Delta_2$,$\ldots$, $\Delta_n$ have been chosen,
 we let $\Gamma_n=\bigcup_{j=1}^n \Delta_j$. We denote the unit vector basis of $\ell_1(\Gamma_n)$  by $(e^*_\gamma:\gamma\kin\Gamma_n)$,
 and consider the spaces $\ell_1(\Gamma_j)$ and $\ell_1(\Gamma_n\setminus \Gamma_j)$, $j<n$,   to be, in the natural way, embedded into
 $\ell_1(\Gamma_n)$.

 For $n\ge 1$, $\Delta_{n+1}$ will be the union of two sets $\Delta_{n+1}^{(0)}$ and
 $\Delta_{n+1}^{(1)}$, where $\Delta_{n+1}^{(0)}$ and  $\Delta_{n+1}^{(1)}$ satisfy the following
 conditions.

 The set $\Delta_{n+1}^{(0)}$ is finite and
 \begin{align}\label{E:1.1.1}
 \Delta_{n+1}^{(0)}\subset \big\{(n+1,\beta,b^*,f): \beta\kin[0,1], b^*\kin B_{\ell_1(\Gamma_n)}, \text{ and }f\kin V_{(n+1,\beta,b^*)}\big\},
 \end{align}
 where $V_{(n+1,\beta,b^*)}$ is a finite set for $\beta\kin[0,1]$ and $b^*\kin B_{\ell_1(\Gamma_n)}$.

$\Delta^{(1)}_{n+1}$ is finite and
 \begin{equation}\label{E:1.1.2}
\Delta_{n+1}^{(1)} \subset \left\{ (n+1, \alpha, k,\xi, \beta, b^*,f):
 \begin{matrix} \alpha,\beta\kin[0,1], k\kin\{1,2,\ldots n-1\}, \xi\kin \Delta_k, \\
b^*\kin B_{\ell_1(\Gamma_n\setminus \Gamma_k)} \text{ and } f\kin
V_{(n+1,\alpha, k,\xi,\beta,b^*) }\end{matrix}
\right\},\end{equation} where $V_{(n+1,\alpha, k,\xi,\beta,b^*)}$ is
a finite set for $\alpha\kin[0,1]$, $k\kin\{1,2,\ldots, n-1\}$,
$\xi\kin\Delta_k$,  $\beta \kin[0,1]$, and  $b^*\kin
B_{\ell_1(\Gamma_n\setminus\Gamma_k)}$.

Moreover, we assume that  $\Delta_{n+1}^{(0)}$ and
$\Delta_{n+1}^{(1)}$ cannot both be empty.

If $(\Delta_n)$ is a sequence of Bourgain-Delbaen sets  we put $\Gamma=\bigcup_{j=1}^\infty \Gamma_n$.
For $n\kin\N$,  and $\gamma\kin\Delta_n$ we call $n$ the {\em rank of $\gamma$} and denote it by $\rk(\gamma)$. If $n\ge 2$
and $\gamma=(n,\beta,b^*,f)\kin \Delta^{(0)}_n$,
 we say that $\gamma$ is {\em of type $0$,} and if   $\gamma=(n, \alpha, k,\xi, \beta, b^*,f)\kin  \Delta^{(1)}_n$, we
  say that  $\gamma$ is {\em of type $1$}. In both cases we  call
 $\beta$ {\em the weight of $\gamma$} and denote it by $\w(\gamma)$ and
   call $f$  the {\em free variable} and denote it by $\f(\gamma)$.

   In case that  $V_{(n+1,\beta,b^*)}$ or $V_{(n+1,\alpha, k,\xi,\beta,b^*)}$ is a singleton
   (which will be often he case)
   we sometimes suppress  the dependency in the free variable and write
   $(n+1,\beta,b^*)$ instead of $(n+1,\beta,b^*,f)$ and $(n+1,\alpha, k,\xi,\beta,b^*)$
   instead of $(n+1,\alpha, k,\xi,\beta,b^*,f)$.
 \end{defin}
Referring to a sequence of sets $(\Delta_n:n\kin\N)$ as
Bourgain-Delbaen sets we will always  mean that  the  sets
$\Delta_n^{(0)}$, $\Delta_n^{(1)}$, $\Gamma_n$ and $\Gamma$ have
been  defined satisfying the conditions above. We consider the
spaces $\ell_\infty(\bigcup_{j\kin A} \Delta_j)$ and
$\ell_1\big(\bigcup_{j\kin A} \Delta_j\big)$,  for $A\subset \N$, to
be naturally embedded into $\ell_\infty(\Gamma)$
 and $\ell_1(\Gamma)$, respectively.

We denote by $c_{00}(\Gamma)$ the real vector space of families $x=(x(\gamma):\gamma\kin\Gamma)\subset\R$ for which
 the {\em support},
 $\supp(x)=\{\gamma\kin\Gamma: x(\gamma)\not=0\}$, is finite.
  The  unit vector  basis of  $c_{00}(\Gamma)$ is denoted by $(e_\gamma:\gamma\kin\Gamma)$, or, if we regard $c_{00}(\Gamma)$ to be a subspace
   of a dual space, such as $\ell_1(\Gamma)$, by $(e^*_\gamma:\gamma\kin\Gamma)$.
  If $\Gamma=\N$ we write $c_{00}$ instead of $c_{00}(\N)$.

\begin{defin}\label{D:1.3} (Bourgain-Delbaen families of functionals)

Assume that $(\Delta_n:n\kin\N)$ is a sequence  of Bourgain-Delbaen sets.
By induction on $n$ we will define for all $\gamma\kin\Delta_n$, elements
 $c^*_\gamma\kin \ell_1(\Gamma_{n-1})$ and
 $d^*_\gamma\kin\ell_1(\Gamma_n)$, with $d^*_\gamma=e^*_\gamma-c^*_\gamma$.

 For $\gamma\kin\Delta_1$ we define $c^*_\gamma=0$, and thus $d^*_\gamma=e^*_\gamma$.

 Assume that for some $n\kin\N$ we have defined $(c^*_\gamma:\gamma\kin\Gamma_n)$,
 with $c^*_\gamma\kin\ell_1(\Gamma_{j-1})$, if $j\le n$ and $\rk(\gamma)=j$. It follows therefore
 that $(d_\gamma^*:\gamma\kin\Gamma_n)=(e^*_\gamma-c^*_\gamma:\gamma\kin\Gamma_n)$ is a basis
 for $\ell_1(\Gamma_n)$ and thus for $k\le n$ we have projections:
 \begin{equation}\label{E:1.3.1}
 P^*_{(k,n]}:\ell_1(\Gamma_n)\to \ell_1(\Gamma_n) ,\quad \sum_{\gamma\in\Gamma_n} a_\gamma d^*_\gamma  \to \sum_{\gamma\in \Gamma_n\setminus \Gamma_k}a_\gamma d^*_\gamma.
 \end{equation}
For $\gamma\kin\Delta_{n+1}$ we define
\begin{equation}\label{E:1.3.2}
c_\gamma^*=\begin{cases} \beta b^*
       &\text{if $\gamma=(n+1,\beta,b^*,f)\kin \Delta_{n+1}^{(0)}$,}\\
                                                  \alpha e^*_\xi +\beta P^*_{(k,n]}(b^*) &\text{if $\gamma=(n+1,\alpha,k,\xi,\beta,b^*,f)\kin \Delta_{n+1}^{(1)}$.}
\end{cases}
\end{equation}
We call $(c^*_\gamma:\gamma\kin\Gamma)$,  the {\em Bourgain-Delbaen family of functionals associated to $(\Delta_n: n\kin\N)$}. We will, in this case,
 consider the projections $P^*_{(k,n]}$ to be defined on all of $c_{00}(\Gamma)$, which is possible since $(d^*_\gamma:\gamma\kin\Gamma)$
  forms a  vector basis of $c_{00}(\Gamma)$  and, (as we will observe  later)  under further assumptions, a Schauder basis of $\ell_1(\Gamma)$.
 \end{defin}
 \begin{rems}   The reason for using $*$ in the notation for $P^*_{(k,m]}$ is that later
  we will show (with additional assumptions) that the $P^*_{(k,m]}$'s are the adjoints  of coordinate projections $P_{(k,m]}$ on a space
  $Y$ with an FDD $\bF=(F_j)$ onto $\oplus_{j\in(k,m]} F_j$.

 Of course we could,  in the definition of $\Delta^{(0)}_{n+1}$ and $\Delta^{(1)}_{n+1}$, assume $\beta=1$, rescale $b^*$ accordingly, possibly increasing   the number
 of free variables,  then simply define $c^*_\gamma=b^*$, if $\gamma$ is of type $0$,
 or $c^*_\gamma=\alpha e^*_\xi+P^*_{(k,n]}(b^*)$, if $\gamma$ is of type $1$.
 Nevertheless, it  will prove later more convenient to have this  redundant representation which
  will allow us to change the weights of the elements of $\Gamma$ and rescale the $b^*$'s, without changing the $c^*_\gamma$'s. Moreover, it will be useful   for recognizing that our framework is a generalization of the  constructions in \cite{AH} and \cite{BD}.
 \end{rems}
 The next observation is a slight generalization of a result in \cite{AH}, the main idea tracing back to  \cite{BD}.

\begin{prop}\label{P:1.4}
Let $(\Delta_n:n\kin\N)$ be a sequence of  Bourgain-Delbaen sets and
let $(c^*_\gamma:\gamma\kin \Gamma)$
 be the  corresponding   family of associated  functionals. Let  $(P^*_{(k,m]}: k<m)$   and $(d^*_\gamma:\gamma\kin\Gamma)$ be  defined as  in Definition \ref{D:1.3}.
Thus
 $$   P^*_{(k,n]}: c_{00}(\Gamma)\to c_{00}(\Gamma) ,\quad \sum_{\gamma\in\Gamma} a_\gamma d^*_\gamma  \to \sum_{\gamma\in \Gamma_n\setminus \Gamma_k}a_\gamma d^*_\gamma.$$
 For $n\kin\N$, let
 $F^*_n\!=\!\spa(d^*_\gamma:\gamma\kin\Delta_n)$
  and  for $\theta\kin[0,1/2)$ let $C_1(\theta)=C_1=0$ and if $n\ge 2$,
 $$C_n(\theta) =\sup \big\{ \beta\|P^*_{(k,m]}(b^*)\|: \gamma=(\tilde n,\alpha,k,\xi,\beta, b^*,f)\kin \Delta^{(1)}_{\tilde n}, k<m<\tilde n\le n, \beta>\theta\big\},$$
 with $\sup(\emptyset)=0$, and
 $$C_n=C_n(0)=\sup \big\{ \beta\|P^*_{(k,m]}(b^*)\|: \gamma= (\tilde n,\alpha,k,\xi,\beta, b^*,f)\kin \Delta^{(1)}_{\tilde n}, k<m<\tilde n\le n \big\}.$$
 Then
   \begin{equation}\label{E:1.4.1}
   \oplus_{j=1}^n F^*_j=\spa(e^*_\gamma:\gamma\kin\Gamma_n)=\ell_1(\Gamma_n),
 \end{equation}
 and if $C=\sup_n C_n< \infty$,
 then  $\bF^*=(F^*_n)$ is an FDD for $\ell_1(\Gamma)$  whose decomposition constant  $M$  is not larger than $1+C$.  Moreover, for $n\kin\N$ and $\theta\!<\!1/2$,   \begin{equation}\label{E:1.4.1a}
  C_n\le \max\big(2\theta /(1-2\theta), C_n(\theta)\big).
  \end{equation}
 \end{prop}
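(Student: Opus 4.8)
The plan is to treat the two assertions of the proposition separately, both flowing from one inductive norm estimate for the projections $P^*_{(0,m]}$.

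\emph{The identity \eqref{E:1.4.1}, and how the $P^*$'s behave.} First I would order $\Gamma_n$ by non-decreasing rank. Since $d^*_\gamma = e^*_\gamma - c^*_\gamma$ with $c^*_\gamma\kin\ell_1(\Gamma_{\rk(\gamma)-1})=\spa\{e^*_\delta:\rk(\delta)<\rk(\gamma)\}$, the change-of-basis matrix from $(e^*_\gamma)_{\gamma\kin\Gamma_n}$ to $(d^*_\gamma)_{\gamma\kin\Gamma_n}$ is lower unitriangular, hence invertible; so $(d^*_\gamma)_{\gamma\kin\Gamma_n}$ is a linear basis of $\spa\{e^*_\gamma:\gamma\kin\Gamma_n\}=\ell_1(\Gamma_n)$, the sum $\oplus_{j=1}^n F^*_j$ is genuinely direct, and it equals $\ell_1(\Gamma_n)$ — all of this requiring no hypothesis on $C$. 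A consequence I would record for later use is that, being coordinate projections with respect to the $d^*$-basis, the idempotents $P^*_{(k,m]}$ on $c_{00}(\Gamma)$ compose like intersections of rank-intervals: $P^*_{(0,m]}P^*_{(k,\ell]}=P^*_{(k,m]}$ when $k<m\le\ell$ and $=0$ when $m\le k$; in particular $P^*_{(0,m]}$ is the identity on $\ell_1(\Gamma_m)$ and annihilates $F^*_j$ for $j>m$. I would also note that each $C_n$ is finite (a supremum over the finite set of $\gamma$ of rank $\le n$ and the finitely many admissible pairs $k<m$) and that $(C_j)$ is non-decreasing.

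\emph{The core estimate.} The heart of the matter is the claim
\[
\|P^*_{(0,m]}e^*_\gamma\|_1 \le 1 + C_{\max(m,\,\rk(\gamma))}\qquad\text{for all }\gamma\kin\Gamma,\ m\kin\N.
\]
Granting it, the FDD assertion follows: for $x=\sum_\gamma x(\gamma)e^*_\gamma\kin c_{00}(\Gamma)$ with largest rank $N$ in its support, the triangle inequality gives $\|P^*_{(0,m]}x\|_1\le(1+C_{\max(m,N)})\|x\|_1\le(1+C)\|x\|_1$ once $C<\infty$, so each $P^*_{(0,m]}$ extends to a bounded idempotent on $\ell_1(\Gamma)$ of norm $\le 1+C$. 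These extensions are nested, compose correctly, and have ranges $\oplus_{j\le m}F^*_j$ increasing to the dense subspace $c_{00}(\Gamma)$, so the standard characterization of finite-dimensional decompositions yields that $\bF^*=(F^*_n)$ is an FDD of $\ell_1(\Gamma)$ with decomposition constant $\sup_m\|P^*_{(0,m]}\|\le 1+C$.

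\emph{Proving the core estimate.} I would induct on $\rk(\gamma)$. If $m\ge\rk(\gamma)$ then $e^*_\gamma\kin\ell_1(\Gamma_m)$, so $P^*_{(0,m]}e^*_\gamma=e^*_\gamma$ and the norm is $1$. If $m<\rk(\gamma)$ then $P^*_{(0,m]}d^*_\gamma=0$, so $P^*_{(0,m]}e^*_\gamma=P^*_{(0,m]}c^*_\gamma$, and I expand $c^*_\gamma$ by \eqref{E:1.3.2}. For $\gamma$ of type $0$, $c^*_\gamma=\beta b^*$ with $\|b^*\|_1\le1$ and $b^*$ supported on $\Gamma_{\rk(\gamma)-1}$, so the inductive hypothesis applied to each $e^*_\delta$ occurring in $b^*$ bounds $\|P^*_{(0,m]}b^*\|_1$ by $1+C_{\rk(\gamma)-1}$, whence $\|P^*_{(0,m]}e^*_\gamma\|_1\le\beta(1+C_{\rk(\gamma)-1})\le1+C_{\rk(\gamma)}$. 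For $\gamma=(\rk(\gamma),\alpha,k,\xi,\beta,b^*,f)$ of type $1$, $c^*_\gamma=\alpha e^*_\xi+\beta P^*_{(k,\rk(\gamma)-1]}(b^*)$, and by the composition rule $P^*_{(0,m]}c^*_\gamma$ equals $\alpha P^*_{(0,m]}e^*_\xi$ when $m\le k$ and $\alpha P^*_{(0,m]}e^*_\xi+\beta P^*_{(k,m]}(b^*)$ when $k<m$. In the first case the inductive hypothesis applied to $e^*_\xi$ (rank $k<\rk(\gamma)$) gives $\alpha(1+C_k)\le1+C_{\rk(\gamma)}$. In the second case $\rk(\xi)=k<m$ forces $P^*_{(0,m]}e^*_\xi=e^*_\xi$, so $P^*_{(0,m]}e^*_\gamma=\alpha e^*_\xi+\beta P^*_{(k,m]}(b^*)$ and $\|P^*_{(0,m]}e^*_\gamma\|_1\le\alpha+\beta\|P^*_{(k,m]}(b^*)\|_1\le1+C_{\rk(\gamma)}$, the last inequality being precisely the definition of $C_{\rk(\gamma)}$ (valid since $k<m<\rk(\gamma)$). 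This closes the induction. The one genuinely non-mechanical point — the step I expect to be the crux — is this case analysis: one must recognize that $P^*_{(0,m]}$ absorbs the inner projection $P^*_{(k,\rk(\gamma)-1]}$ into exactly the functional $P^*_{(k,m]}(b^*)$ whose weighted norm the numbers $C_n$ were defined to control, while it fixes the jump term $e^*_\xi$.

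\emph{The inequality \eqref{E:1.4.1a}.} Fix $n$ and $\theta<1/2$, and take $\gamma=(\tilde n,\alpha,k,\xi,\beta,b^*,f)\kin\Delta^{(1)}_{\tilde n}$ with $k<m<\tilde n\le n$. If $\beta>\theta$ then $\beta\|P^*_{(k,m]}(b^*)\|_1\le C_n(\theta)$ by definition. If $\beta\le\theta$, I write $P^*_{(k,m]}(b^*)=P^*_{(0,m]}(b^*)-P^*_{(0,k]}(b^*)$; since $b^*$ is supported on $\Gamma_{\tilde n-1}\subseteq\Gamma_{n-1}$, the core estimate bounds both $\|P^*_{(0,m]}(b^*)\|_1$ and $\|P^*_{(0,k]}(b^*)\|_1$ by $1+C_{n-1}$, so $\beta\|P^*_{(k,m]}(b^*)\|_1\le2\theta(1+C_{n-1})\le2\theta(1+C_n)$. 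Taking the supremum over all such $\gamma$ and $m$ gives $C_n\le\max\big(C_n(\theta),\,2\theta(1+C_n)\big)$; since $C_n<\infty$ and $1-2\theta>0$, the second alternative rearranges to $C_n\le 2\theta/(1-2\theta)$, which is \eqref{E:1.4.1a}.
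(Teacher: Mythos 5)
Your proof is correct and takes essentially the same route as the paper: the same induction (on rank of $\gamma$, equivalently on $n$ after reducing to $e^*_\gamma$ by convexity) giving $\|P^*_{[1,m]}(e^*_\gamma)\|\le 1+C_{\rk(\gamma)}$, with the identical two-case split $m\le k$ vs.\ $k<m$ (the latter absorbing the inner projection into $P^*_{(k,m]}(b^*)$ and invoking the definition of $C_n$), and the same telescoping $P^*_{(k,m]}=P^*_{[1,m]}-P^*_{[1,k]}$ for \eqref{E:1.4.1a}. The only cosmetic difference is that you deduce \eqref{E:1.4.1a} after the fact from the completed norm estimate via the rearrangement $C_n(1-2\theta)\le2\theta$ (using $C_n<\infty$), whereas the paper folds it into the same induction with a small case analysis; both are fine.
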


 \begin{proof} As already noted,  since $d^*_\gamma=e^*_\gamma-c^*_\gamma$, and $c^*_\gamma\kin\ell_1(\Gamma_{n-1})$, for $n\kin\N$ and  $\gamma\kin\Delta_n$, \eqref{E:1.4.1}
 holds.
 By induction on $n\kin\N$ we will show that  for all $0\le  m< n$, $\|P_{[1,m]} ^*|_{\ell_1(\Gamma_n)} \|  \le  1+ C_{n}$, and that
  \eqref{E:1.4.1a} holds, whenever $\theta<1/2$.
   For $n=1$, and thus $m=0$ and $C_1=0$, the claim follows trivially ($\|P^*_\emptyset\|\equiv 0$).
 Assume the claim is true for some $n\kin\N$.  Using the induction hypothesis and the
   fact that every element of $B_{\ell_1(\Gamma_{n+1})}$ is a   convex combination  of
    $\{\pm e^*_\gamma: \gamma\kin\Gamma_{n+1}\}$ and $C_n(\theta)\le C_{n+1}(\theta)$,
  it is enough to show that  for all $\gamma\kin\Delta_{n+1}$ and all $m\le n$
  \begin{align}\label{E:1.4.2}
  &\|P^*_{[1,m]}(e^*_\gamma)\|\le 1+C_{n+1}\text{ and}\\
  \label{E:1.4.3}&\|\beta P^*_{(k,m]}(b^*)\|\le \frac{2\theta}{1\!-\!2\theta}\vee C_{n}(\theta),\text{if $\beta\!\le\! \theta\!<\!1/2$ and $\gamma=(n\!+\!1,\alpha,k,\xi,\beta,b^*,f)\kin \Delta^{(1)}_{n+1}$}.
  \end{align}

  According
   to  \eqref{E:1.3.2}  we can write
   $$e^*_\gamma=d^*_\gamma+c^*_\gamma=d^*_\gamma+ \alpha e^*_\xi +\beta P^*_{(k,n]}(b^*),$$
   with $\alpha,\beta\kin[0,1]$, $0\!\le\!k\!<\!n$, $\xi\kin \Delta_k$ (put $k=0$ and $\alpha=0$ if $\gamma$ is of type $0$),      and $b^*\kin B_{\ell_1(\Gamma_n\setminus \Gamma_k)}$.

  Thus
  $$  P^*_{[1,m]}(e^*_\gamma)=\alpha  P^*_{[1,m]}(e^*_\xi) +\beta P^*_{(\min(m,k),m]}(b^*).$$

Now, if $k\ge m$, then   $P^*_{[1,m]}(e^*_\gamma)=\alpha P^*_{[1,m]}(e^*_\xi)$ and thus
  our claim \eqref{E:1.4.2} follows from the induction hypothesis:
  $$\|\alpha P^*_{[1,m]}(e_\xi^*)\|\le 1+C_k\le 1+C_{n+1}.$$

  If $k<m$ it follows, again using the induction hypothesis in the type 0 case, that
$$  \|P^*_{[1,m]}(e^*_\gamma)\|\le \alpha \|e^*_\xi\| +\beta \|P^*_{(k,m]}(b^*)\| \le 1 + C_{n+1},
\text{ which yields \eqref{E:1.4.2}}.$$

In order to show \eqref{E:1.4.3}, let  $\gamma=(n+1,\alpha,k,\xi,\beta,b^*,f)\kin \Delta^{(1)}_{n+1}$, with
$\beta\le \theta<1/2$. We deduce from the induction hypothesis  that
\begin{align*}
\|\beta P^*_{(k,m]}(b^*)\|&\le \beta( \|P^*_{[1,m]}|_{\ell_1(\Gamma_n)} \|+ \|P^*_{[1,k]}|_{\ell_1(\Gamma_n)} \|)\\
                                            &\le 2 \theta( C_n+1)\\
                                            &\le \begin{cases}    2\theta\big( C_n(\theta)+1))
                                            \le  2\theta C_n(\theta)+ C_n(\theta)(1-2\theta) =C_n(\theta)
                                             &\text{if $C_n(\theta)>     \frac{2\theta}{1-2\theta} $,}\\
                                         2\theta\Big( \frac{2\theta}{1-2\theta} +1\Big)   =  \frac{2\theta}{1-2\theta}   &\text{otherwise,} \end{cases}\\
                                                       & \le \max\Big(\frac{2\theta}{1-2\theta}, C_n(\theta)\Big).
\end{align*}
This  finishes the induction step, and  hence the proof. \end{proof}

 \begin{rems}\label{R:1.4a} Let  $\Gamma$ be linearly  ordered as $(\gamma_j:j\kin\N) $ in such a way that $\rk(\gamma_i)\le \rk(\gamma_j)$, if $i\le j$. Then the same arguments
  show that, under the assumption $C\!<\!\infty$ stated in Proposition \ref{P:1.4}, $(d^*_{\gamma_j})$ is  actually a Schauder basis of $\ell_1$ \cite{AH}.  But, for our purpose,   the  FDD is the more useful coordinate system.

The spaces constructed in  \cite{AH}  satisfy the condition that for some $\theta<1/2$  we have  $\beta \le \theta$, for all  $\gamma=(n,\alpha,k,a^*,\beta,b^*,f)\kin \Gamma$ of type 1.
 Thus in that case  $C_n(\theta)=0$, $n\kin\N$, and  the conclusion of Proposition \ref{P:1.4} is  true for $C\le 2\theta/(1-2\theta)$ and, thus   $M\le 1/(1-2\theta)$.

 The Bourgain-Delbaen sets  we will consider  in later sections will satisfy the following condition for some $0<\theta<1/2$:
 \begin{align}\label{E:1.1a}
  &\text{For  each $n\in\N$ and $\gamma=(n,\alpha,k,\xi,\beta, b^*,f)\in \Delta_n^{(1)}$,}\\
 &\text{either $\beta\le \theta$, or   $b^*=e^*_\eta$ for some $\eta\in\Delta_m$, $k<m<n$, such
  that $c^*_\eta=0$.}\notag\end{align}
  Note that in the second case it follows that
   $e^*_\eta=d^*_\eta$ and so $P^*_{(k,m]}(e^*_\eta)=e^*_\eta$.
 Thus,
   $\beta\|P^*_{(k,m]}(b^*)\|=\beta\|e^*_\eta\|\le 1$, and thus, we deduce that the assumptions of Proposition \ref{P:1.4}  are satisfied, namely  that
  $\bF^*$ is an FDD of $\ell_1$  whose decomposition constant $M$  is not larger than $\max (1/(1-2\theta),2)$.

\end{rems}

 Assume we are given  a sequence of Bourgain-Delbaen sets  $(\Delta_n:n\kin\N)$, which satisfy the assumptions
  of Proposition  \ref{P:1.4} with $C<\infty$ and let $M$ be the decomposition constant of  the FDD $(F_n^*)$
 in $\ell_1(\Gamma)$.
 We now  define the {\em Bourgain-Delbaen space associated to } $(\Delta_n:n\kin\N)$.
   For a  finite or cofinite  set $A\!\subset\!\N$, we let $P^*_A$  be the projection of $\ell_1(\Gamma)$ onto  the subspace $\oplus_{j\in A} F_j^*$  given by
  $$P^*_A:\ell_1(\Gamma)\to \ell_1(\Gamma),\qquad \sum_{\gamma\in\Gamma} a_\gamma d^*_\gamma\mapsto
   \sum_{\gamma\in A} a_\gamma d^*_\gamma.$$
   If $A=\{m\}$, for some  $m\kin\N$, we write $P_m^*$ instead of $P_{\{m\}}^*$.  For $m\kin\N$, we denote
   by $R_m$ the restriction operator  from $\ell_1(\Gamma)$ onto  $\ell_1(\Gamma_m)$ (in terms of the basis $(e^*_\gamma)$)   as well
     the usual restriction operator from $\ell_\infty(\Gamma)$ onto  $\ell_\infty(\Gamma_m)$. Since $R_m\circ P^*_{[1,m]}$ is a  projection
      from $\ell_1(\Gamma)$ onto $\ell_1(\Gamma_m)$, for $m\kin\N$, it follows that
      the map
      $$J_m:\ell_\infty(\Gamma_m)\to \ell_\infty(\Gamma), \quad x\mapsto P^{**}_{[1,m]}\circ R_m^*(x),$$
      is an isomorphic embedding ($ P^{**}_{[1,m]}$ is the adjoint of $P^*_{[1,m]}$ and, thus, defined on $\ell_\infty(\Gamma)$).
       Since $R^*_m$ is the natural embedding of $\ell_\infty(\Gamma_m)$ into   $\ell_\infty(\Gamma)$
      it follows, for all $m\kin\N$, that
      \begin{align}
      \label{E:1.2a}  &R_m\circ J_m(x)=x, \text{ for $x\in\ell_\infty(\Gamma_m)$, thus $J_m$ is an extension operator,}\\
          \label{E:1.2c} &J_n\circ R_n \circ J_m(x)= J_m(x), \text{ whenever $m\le n$ and $x\in\ell_\infty(\Gamma_m$)},
          \intertext{and by Proposition \ref{P:1.4},}
           \label{E:1.2b}  &\|J_m\| \le  M.
      \end{align}
    Hence the spaces $Y_m=J_m(\ell_\infty(\Gamma_m))$, $m\kin\N$, are  finite-dimensional nested subspaces of $\ell_\infty(\Gamma)$ which (via $J_m)$ are $M$-isomorphic images
    of   $\ell_\infty(\Gamma_m)$. Therefore
  $Y=\overline{\bigcup_{m\in\N} Y_n}^{\ell_\infty}$
    is a ${\cL}_{\infty,M}$ space. We call $Y$ the {\em Bourgain-Delbaen space associated to $(\Delta_n)$}. It follows from the definition
    of $Y$, and from \ref{E:1.2a}, that for any $x\in\ell_\infty(\Gamma)$ we have
\begin{equation}\label{E:1.3}
 x\in Y \iff  x= \lim_{m\to\infty}\|x- J_m\circ R_m (x)\|=0.
\end{equation}
    Define for $m\kin\N$
    $$P_{[1,m]}:Y\to Y,\quad x\mapsto J_m\circ R_m(x).$$
    We claim that $P_{[1,m]}$ coincides  with the   restriction of the  adjoint $P^{**}_{[1,m]}$ of $P^{*}_{[1,m]}$
   to the space $Y.$
    Indeed, if $n\kin\N$, with $n\ge m$, and $x=J_n(\tilde x)\kin Y_n$, and $b^*\in\ell_1(\Gamma)$ we have that
      \begin{align*}
      \la P^{**}_{[1,m]}(x), b^*\ra&=  \la x, P^*_{[1,m]}(b^*)\ra\\
      &=\la R_m  (x) ,R_m\circ P^*_{[1,m]} (b^*)\ra \text{ (since  $P^*_{[1,m]} (b^*)\in\spa(e^*_\gamma:\gamma\in \Gamma_m)$)}\\
       &=\la   P^{**}_{[1,m]}\circ R_m^* \circ R_m(x),b^*\ra= \la P_{[1,m]}(x), b^*\ra.
      \end{align*}
    Thus our claim follows since $\bigcup_n Y_n$ is dense in $Y.$

   We therefore deduce that $Y$ has an FDD $(F_m)$, with $F_m=(P_{[1,m]}-P_{[1,m-1]})(Y)$, and
    as we observed in \eqref{E:1.2b},  $Y_m=\oplus_{j=1}^n F_j$ is, via $J_m$, $M$-isomorphic to $\ell_\infty(\Gamma_m)$ for $m\kin\N$.   Moreover, denoting
    by $P_A$ the coordinate projections from $Y$ onto $\oplus_{j\in A} F_j$, for all finite or cofinite sets $A\subset \N$,
 it follows that
   $P_A$  is  the adjoint of $P^*_A$ restricted to $Y$, and  $P_A^*$ is the  adjoint of $P_A$ restricted to the  subspace of $Y^*$ generated by the $F_n^*$'s.

   As the next  observation shows,  $J_m|_{\ell_\infty(\Delta_m)}$ is actually an isometry for $m\in\N$.
    \begin{prop}\label{P:1.4a}
  For every $m\in\N$ the map  $J_m|_{\ell_\infty(\Delta_m)}$ is an isometry between $\ell_\infty(\Delta_m)$ (which we consider naturally embedded into $\ell_\infty(\Gamma_m)$)
   and $F_m$.
  \end{prop}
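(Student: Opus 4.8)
The plan is to prove two things about $J_m|_{\ell_\infty(\Delta_m)}$: that it preserves the sup-norm and that its range is exactly $F_m$. Fix $m$, regard $x\in\ell_\infty(\Delta_m)$ as the element of $\ell_\infty(\Gamma_m)$ supported on $\Delta_m$, and put $y=J_m(x)=P^{**}_{[1,m]}R_m^*(x)$, so that $y(\gamma)=\langle R_m^*(x),P^*_{[1,m]}(e^*_\gamma)\rangle$ for every $\gamma\in\Gamma$. First I would record that for $\rk(\gamma)\le m$ one has $P^*_{[1,m]}(e^*_\gamma)=e^*_\gamma$, whence $y(\gamma)=x(\gamma)$; in particular $y$ vanishes on $\Gamma_{m-1}$, so $\|y\|_\infty\ge\max_{\gamma\in\Delta_m}|x(\gamma)|=\|x\|_\infty$. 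For an arbitrary $\gamma$, since $R_m^*(x)$ is supported on $\Delta_m$ and $F^*_j\subseteq\ell_1(\Gamma_j)$ for $j<m$, only the $F^*_m$-component of $P^*_{[1,m]}(e^*_\gamma)$ survives the pairing; writing $P^*_m(e^*_\gamma)=\sum_{\zeta\in\Delta_m}a_\zeta(\gamma)\,d^*_\zeta$ and using $d^*_\zeta-e^*_\zeta=-c^*_\zeta\in\ell_1(\Gamma_{m-1})$, one gets $y(\gamma)=\sum_{\zeta\in\Delta_m}a_\zeta(\gamma)\,x(\zeta)$, hence $|y(\gamma)|\le\|x\|_\infty\,N(\gamma)$, where $N(\gamma):=\sum_{\zeta\in\Delta_m}|a_\zeta(\gamma)|$ is the $\ell_1$-norm of $P^*_m(e^*_\gamma)$ in the basis $(d^*_\zeta)_{\zeta\in\Delta_m}$ of $F^*_m$. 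Thus the whole statement reduces to the estimate $N(\gamma)\le1$ for all $\gamma\in\Gamma$.

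This estimate I would prove by induction on $\rk(\gamma)$. If $\rk(\gamma)<m$ then $e^*_\gamma\in\ell_1(\Gamma_{m-1})=\bigoplus_{j<m}F^*_j$, so $P^*_m(e^*_\gamma)=0$; if $\rk(\gamma)=m$ then $P^*_m(e^*_\gamma)=d^*_\gamma$ because $e^*_\gamma-d^*_\gamma=c^*_\gamma\in\ell_1(\Gamma_{m-1})$, so $N(\gamma)=1$. Now suppose $\rk(\gamma)=n>m$; since $d^*_\gamma\in F^*_n$ we have $P^*_m(e^*_\gamma)=P^*_m(c^*_\gamma)$. If $\gamma$ is of type $0$, $c^*_\gamma=\beta b^*$ with $b^*=\sum_{\eta\in\Gamma_{n-1}}b_\eta e^*_\eta$, $\sum|b_\eta|\le1$, and applying the inductive hypothesis termwise gives $N(\gamma)\le\beta\sum_\eta|b_\eta|\,N(\eta)\le\beta\le1$. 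If $\gamma$ is of type $1$, $c^*_\gamma=\alpha e^*_\xi+\beta P^*_{(k,n-1]}(b^*)$ with $\xi\in\Delta_k$, $b^*\in B_{\ell_1(\Gamma_{n-1}\setminus\Gamma_k)}$, and here I would use the separation of levels between the two summands: $e^*_\xi\in\ell_1(\Gamma_k)=\bigoplus_{j\le k}F^*_j$, whereas $P^*_{(k,n-1]}(b^*)\in\bigoplus_{k<j\le n-1}F^*_j$. Consequently, if $m\le k$ then $P^*_m$ kills the second summand, so $P^*_m(e^*_\gamma)=\alpha P^*_m(e^*_\xi)$ and $N(\gamma)=\alpha N(\xi)\le1$; if $m>k$ then $P^*_m$ kills the first summand, so $P^*_m(e^*_\gamma)=\beta P^*_m(b^*)$, and expanding $b^*=\sum_{\eta\in\Gamma_{n-1}\setminus\Gamma_k}b_\eta e^*_\eta$ with $\sum|b_\eta|\le1$ gives, via the inductive hypothesis, $N(\gamma)\le\beta\le1$. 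This closes the induction, and with the first paragraph it yields $\|J_m(x)\|_\infty=\|x\|_\infty$.

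To identify the range, I would note that $y=J_m(x)$ lies in $Y_m=\bigoplus_{j=1}^m F_j$ and that $R_{m-1}(y)=0$ gives $P_{[1,m-1]}(y)=J_{m-1}R_{m-1}(y)=0$; hence $P_m(y)=(P_{[1,m]}-P_{[1,m-1]})(y)=y$, i.e.\ $y\in F_m$. So $J_m|_{\ell_\infty(\Delta_m)}$ is an isometry of $\ell_\infty(\Delta_m)$ into $F_m$, in particular injective, and since $\dim\ell_\infty(\Delta_m)=|\Delta_m|=\dim F^*_m=\dim F_m$ it is onto $F_m$.

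The only step that is not routine bookkeeping within the Bourgain-Delbaen framework is the type $1$ case of the induction. Since a priori $\|c^*_\gamma\|_{\ell_1}$ can be as large as $1+C$, a naive triangle inequality applied to $c^*_\gamma=\alpha e^*_\xi+\beta P^*_{(k,n-1]}(b^*)$ only gives $N(\gamma)\le\alpha+\beta\le2$. What rescues the bound is exactly that $\xi$ has rank at most $k$ while $P^*_{(k,n-1]}(b^*)$ is supported, in the FDD $\bF^*$, on coordinates of rank strictly greater than $k$, so that the single-block projection $P^*_m$ annihilates precisely one of the two summands of $c^*_\gamma$, leaving no room for a factor-two loss.
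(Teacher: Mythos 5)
Your proof is correct and follows essentially the same route as the paper's: both establish the non-expansive direction by induction on $\rk(\gamma)$, with the crux in the type-$1$ step being the case split $k<m$ versus $k\ge m$, under which the level-separation between $\alpha e^*_\xi\in\bigoplus_{j\le k}F^*_j$ and $\beta P^*_{(k,\cdot]}(b^*)\in\bigoplus_{j>k}F^*_j$ lets the projection onto the $m$-th block annihilate exactly one of the two summands of $c^*_\gamma$. Your repackaging of the bound as $N(\gamma)\le1$, the $\ell_1$-norm of the $F^*_m$-component of $e^*_\gamma$ in the $(d^*_\zeta)_{\zeta\in\Delta_m}$ basis, is simply the sup over $\|x\|\le1$ of the quantity $|\langle P^*_{[1,m]}(e^*_\gamma),R^*_m(x)\rangle|$ that the paper bounds directly, so the two arguments coincide in substance.
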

\begin{proof}
Since $J_m(\ell_\infty(\Delta_m))= (J_m-J_{m-1})(\Delta_m)=F_m$, for
$m\in\N$,    $J_m$ is an isomorphism between $\ell_\infty(\Delta_m)$
and $F_m$. By \ref{E:1.2a}, for $x\in\ell_\infty(\Delta_m)$,
$\|J_m(x)\|\ge \|x\|$. In order to finish the proof we will show by
induction on $n\in\N$ that $|e^*_\gamma(J_m(x))|\le 1$ for all
$\gamma\in\Delta_n$ and $x\in\ell_\infty(\Delta_m)$, $\| x\| \le 1$.

If $n\le m$ this is clear since $R_m\circ J_m(x)=x$.  Let $n>m$ and
assume our claim is true for all $\gamma\in\Gamma_n$. Let
$\gamma\in\Delta_{n+1}$ and write $e^*_\gamma$ as $e^*_\gamma=\alpha
e^*_\xi +\beta P^*_{(k,n]}(b^*) + d_\gamma^*$, with
$\alpha\in[0,1]$, $k<n$, $e^*_\xi\in\Delta_k$, and $b^*\in
B_{\ell_1(\Gamma_n\setminus \Gamma_k)}$  ($\alpha=0$, $k=0$, and
replace $e^*_\xi$ by $0$ if $\gamma$ is of type 0).
  We have  for $x\in \ell_\infty(\Delta_m)$, with $\|x\|\le 1$,
\begin{align*}
\la e^*_\gamma, J_m(x)\ra&= \la P^*_{[1,m]}(e^*_\gamma), R^*_m(x)\ra\\
&= \begin{cases} \beta\la  P^*_{(k,m]}(b^*), R^*_m(x)\ra = \beta \la P_{[1,m]}^{*}(b^*) ,R^*_m(x)\ra  =\beta\la  b^*,J_m (x)\ra  &\text{if $k<m$ } \\
                               \alpha \la e^*_\xi,R_m^*(x)\ra =\alpha \la  P^*_{[1,m]}(e^*_\xi) ,      R_m^*(x)\ra= \alpha\la e^*_\xi ,J_m(x)\ra                                  &\text{if $k\ge m$.} \end{cases}
\end{align*}
Where the first equality in the first case holds since
  $\la P_{[1,k]}^{*}(b^*) ,R^*_m(x)\ra=0$. Using our induction hypothesis, this implies our claim.

\end{proof}

   Denote by $\|\cdot\|_*$ the  dual  norm  of $Y^*$.
      \begin{prop}\label{P:1.4b} For all $y^*\in\ell_1(\Gamma)$
   \begin{equation}\label{E:1.4b.1}
   \|y^*\|_*\le \|y^*\|_{\ell_1}\le M\|y^*\|_*.
   \end{equation}
   and if $y^*\in\oplus_{j=m+1}^n F_j^*$, with $0<m<n$, then there is
   a family $(a_\gamma)_{\gamma\in\Gamma_n\setminus \Gamma_m} $
   so that
   \begin{equation}\label{E:1.4b.2}
   y^*=P^*_{(m,n]}\Big(\sum_{\gamma\in\Gamma_n\setminus\Gamma_m} a_\gamma e^*_\gamma\Big)
   \text {and }
   \Big\|\sum_{\gamma\in\Gamma_n\setminus\Gamma_m} a_\gamma e^*_\gamma\Big\|_{\ell_1}\le M\|y^*\|_*.
   \end{equation}
   \end{prop}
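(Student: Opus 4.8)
\textbf{Proof proposal for Proposition \ref{P:1.4b}.}

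The plan is to reduce everything to the single elementary identity
\[
\la w^*,J_m(x)\ra=\la w^*,x\ra\qquad\text{for } w^*\in\ell_1(\Gamma_m)\text{ and } x\in\ell_\infty(\Gamma_m),
\]
which follows at once from $J_m=P^{**}_{[1,m]}\circ R^*_m$, from the fact that $P^*_{[1,m]}$ fixes every element of $\ell_1(\Gamma_m)=\oplus_{j\le m}F^*_j$ (by \eqref{E:1.4.1}), and from $R^*_m$ being the natural inclusion of $\ell_\infty(\Gamma_m)$ into $\ell_\infty(\Gamma)$. This is essentially the computation already carried out above to identify $P_{[1,m]}$ with the restriction of $P^{**}_{[1,m]}$ to $Y$.

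For \eqref{E:1.4b.1}, the inequality $\|y^*\|_*\le\|y^*\|_{\ell_1}$ is immediate because $Y$ is isometrically contained in $\ell_\infty(\Gamma)$, so $|\la y^*,y\ra|\le\|y^*\|_{\ell_1}\|y\|_\infty\le\|y^*\|_{\ell_1}$ for $y\in B_Y$. For the reverse inequality, pick $m$ with $\supp(y^*)\subset\Gamma_m$ and put $x=(\sign y^*(\gamma))_{\gamma\in\Gamma_m}\in B_{\ell_\infty(\Gamma_m)}$. Then $J_m(x)\in Y$ with $\|J_m(x)\|\le M$ by \eqref{E:1.2b}, and the identity above gives $\la y^*,J_m(x)\ra=\la y^*,x\ra=\|y^*\|_{\ell_1}$; hence $\|y^*\|_{\ell_1}\le\|y^*\|_*\,\|J_m(x)\|\le M\|y^*\|_*$.

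For \eqref{E:1.4b.2} I would take $a_\gamma=y^*(\gamma)$, the coordinate of $y^*$ in the standard basis $(e^*_\gamma)$ of $\ell_1(\Gamma)$, and set $z^*=\sum_{\gamma\in\Gamma_n\setminus\Gamma_m}a_\gamma e^*_\gamma$. Since $y^*\in\oplus_{j=m+1}^nF^*_j\subset\ell_1(\Gamma_n)$ we have $y^*=\sum_{\gamma\in\Gamma_n}y^*(\gamma)e^*_\gamma$, so $y^*-z^*=\sum_{\gamma\in\Gamma_m}y^*(\gamma)e^*_\gamma\in\ell_1(\Gamma_m)=\oplus_{j\le m}F^*_j$, and therefore $P^*_{(m,n]}(z^*)=P^*_{(m,n]}(y^*)=y^*$, which is the first assertion of \eqref{E:1.4b.2}. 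For the norm bound, note that $z^*\in\ell_1(\Gamma_n\setminus\Gamma_m)$, so $\|z^*\|_{\ell_1}=\sup\{\la z^*,x\ra:x\in\ell_\infty(\Gamma_n\setminus\Gamma_m),\ \|x\|_\infty\le1\}$. Extending such an $x$ by zero to $\Gamma_n$, the fact that $y^*-z^*$ is supported on $\Gamma_m$ while $x$ vanishes there gives $\la z^*,x\ra=\la y^*,x\ra$, and the basic identity turns this into $\la y^*,J_n(x)\ra\le\|y^*\|_*\,\|J_n(x)\|\le M\|y^*\|_*$, again by \eqref{E:1.2b}. Taking the supremum over $x$ yields $\|z^*\|_{\ell_1}\le M\|y^*\|_*$.

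No serious obstacle is expected. The one point to be careful about is to distinguish the coordinate projection onto $\spa(e^*_\gamma:\gamma\in\Gamma_n\setminus\Gamma_m)$ taken in the $(e^*_\gamma)$-basis — which is what produces the correct $z^*$ — from the FDD-projection $P^*_{(m,n]}$ taken in the $(d^*_\gamma)$-basis. In particular, the naive attempt of simply replacing each $d^*_\gamma$ by $e^*_\gamma$ in a $d^*$-expansion of $y^*$ does not work, since the $c^*_\gamma$-terms contribute mass to the bands $F^*_j$ with $j\le m$ and thereby inflate the $\ell_1$-norm; truncating the standard coordinates of $y^*$ to $\Gamma_n\setminus\Gamma_m$ and then applying $P^*_{(m,n]}$ is exactly what discards that contribution.
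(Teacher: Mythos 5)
Your proposal is correct and follows essentially the same path as the paper: the inequality $\|y^*\|_{\ell_1}\le M\|y^*\|_*$ is obtained by testing $y^*$ against $J_n(x)$ for a suitably chosen $x\in B_{\ell_\infty(\Gamma_n)}$ using \eqref{E:1.2a}--\eqref{E:1.2b}, and for \eqref{E:1.4b.2} one takes $a_\gamma=y^*(\gamma)$ and observes $P^*_{(m,n]}$ kills $\ell_1(\Gamma_m)=\oplus_{j\le m}F^*_j$. The only (cosmetic) difference is that the paper bounds $\|z^*\|_{\ell_1}\le\|y^*\|_{\ell_1}$ by monotonicity of the $\ell_1$-norm under coordinate deletion and then applies \eqref{E:1.4b.1} to $y^*$, whereas you rederive the same bound by testing $z^*$ directly against $J_n(x)$.
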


\begin{proof}
   The first inequality  in \eqref{E:1.4b.1} is trivial.
   To show the second inequality we let $y^*\in \ell_1(\Gamma_n)$ for some $n\kin\N$ and choose $x\in S_{\ell_\infty(\Gamma_n)}$ so that
   $\la y^*,x\ra=\|y^*\|_{\ell_1}$. Then, from \eqref{E:1.2b} and
   \eqref{E:1.2a},
   $$\|y^*\|_*\ge \Big\la y^*,\frac1M  J _n(x)\Big\ra= \frac1M  \|y^*\|_{\ell_1}.$$

If $y^*\in\oplus_{j=m+1}^n F_j^*$ , we can write $y^*$ as
$$y^*=\sum_{\gamma\in\Gamma_n} \alpha_\gamma e^*_\gamma.$$
Since $P^*_{(m,n]}(e^*_\gamma)=0$, for $\gamma\in\Gamma_m$,   we obtain
$$y^*=P^*_{(m,n]}(y^*)=P^*_{(m,n]}\Big(\sum_{\gamma\in\Gamma_n\setminus\Gamma_m} a_\gamma e^*_\gamma\Big).$$
Moreover we obtain, from \eqref{E:1.4b.1}, that
$$\Big\|     \sum_{\gamma\in\Gamma_n\setminus\Gamma_m} a_\gamma e^*_\gamma    \Big\|_{\ell_1}\le
\Big\|     \sum_{\gamma\in\Gamma_n} a_\gamma e^*_\gamma    \Big\|_{\ell_1}=\|y^*\|_{\ell_1}\le M\|y^*\|_*,$$
which yields \eqref{E:1.4b.2}.
\end{proof}

 We now recall some more notation  introduced in \cite{AH}.  Assume that we are given a Bourgain-Delbaen sequence $(\Delta_n)$ and associated Bourgain-Delbaen
 family of functionals
$(c_\gamma^*\!:\!\gamma\kin\Gamma)$, corresponding to the
Bourgain-Delbaen space $Y$, which admits a  decomposition constant
$M<\infty$. As above we denote its FDD by $(F_n)$. For $n\kin\N$ and
$\gamma\in\Delta_n$, we have
\begin{align*}
e^*_\gamma=d^*_\gamma+c^*_\gamma= d^*_\gamma+\begin{cases}   \beta  b^*
&\text{if   $\gamma=(n, \beta,b^*,f)\in \Delta^{(0)}_n$,}\\
\alpha e^*_\xi\!+\!\beta P^*_{(k,n]}(b^*) &\text{if  $\gamma=(n,
\alpha, k, \xi, \beta, b^*,f)\in \Delta^{(1)}_n$}.
\end{cases}
\end{align*}
By iterating   we eventually arrive (after finitely many steps)  to  a functional of type 0.
By an easy induction  argument we therefore obtain

\begin{prop}\label{P:1.5} For all $n\kin\N$ and $\gamma\in\Delta_n$, there are
$a \in \N$,  $\beta_1,\beta_2,\ldots \beta_a\kin[0,1]$,
$\alpha_1,\alpha_2,\ldots \alpha_a\kin[0,1]$ and  numbers
$0\!=\!p_0< p_1<p_2-1< p_2<p_3<p_3-1,\ldots <p_{a-1}<p_a-1<p_a=n$ in
$\N_0$, vectors $b^*_j$, $j=1,2\ldots a$, with $b^*_j\in
B_{\ell_1(\Gamma_{p_j-1}\setminus \Gamma_{p_{j-1}})}$,
 and $(\xi_j)_{j=1}^a\subset \Gamma_n$, with $\xi_j\in \Delta_{p_j}$, for $j=1,2\ldots a$, and $\xi_a=\gamma$, so that
\begin{equation}\label{E:1.5.1} e^*_\gamma=
 \sum_{j=1}^{a}  \alpha_j d^*_{\xi_j} +  \beta_j P^*_{(p_{j-1},p_j)}(b^*_j) .
 \end{equation}
 Moreover for $1\le j_0< a$
 \begin{equation}\label{E:1.5.1a} e^*_\gamma= \alpha_{j_0} e^*_{\gammab_{j_0}}+  \sum_{j=j_0+1}^{a}  \alpha_j d^*_{\xi_j} +  \beta_j P^*_{(p_{j-1},p_j)}(b^*_j) .
 \end{equation}
 \end{prop}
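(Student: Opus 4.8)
The plan is to unravel the recursive identity $e^*_\gamma = d^*_\gamma + c^*_\gamma$ of Definition \ref{D:1.3} by repeated substitution, organised as an induction on $n = \rk(\gamma)$. For $n = 1$ there is nothing to prove: $c^*_\gamma = 0$, so $e^*_\gamma = d^*_\gamma$ and one takes $a = 1$, $\xi_1 = \gamma$, $\alpha_1 = 1$, $\beta_1 = 0$, $b^*_1 = 0$, $p_0 = 0$, $p_1 = 1$. If $n \ge 2$ and $\gamma = (n,\beta,b^*,f)$ is of type $0$, then $e^*_\gamma = d^*_\gamma + \beta b^*$ with $b^* \in B_{\ell_1(\Gamma_{n-1})}$, and $a = 1$ again works, now with $\alpha_1 = 1$, $\beta_1 = \beta$, $b^*_1 = b^*$, $p_1 = n$, $p_0 = 0$; here one reads $P^*_{(p_0,p_1)}$ as $P^*_{(0,n-1]} = P^*_{[1,n-1]}$, which by \eqref{E:1.4.1} acts as the identity on $\ell_1(\Gamma_{n-1})$. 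The trivial case $n = 1$ is thus a degenerate instance of type $0$.

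Now let $\gamma = (n,\alpha,k,\xi,\beta,b^*,f)$ be of type $1$, so that $\xi \in \Delta_k$ with $1 \le k \le n-2$, $b^* \in B_{\ell_1(\Gamma_{n-1}\setminus\Gamma_k)}$, and, by \eqref{E:1.3.2},
\[
e^*_\gamma = d^*_\gamma + \alpha\, e^*_\xi + \beta\, P^*_{(k,n)}(b^*),
\]
where $P^*_{(k,n)}$ denotes the truncation $P^*_{(k,n-1]}$ of \eqref{E:1.3.1}. Apply the inductive hypothesis to $\xi$, whose rank $k$ satisfies $k<n$: it produces an expansion $e^*_\xi = \sum_{j=1}^{\tilde a}\big(\tilde\alpha_j\, d^*_{\tilde\xi_j} + \tilde\beta_j\, P^*_{(\tilde p_{j-1},\tilde p_j)}(\tilde b^*_j)\big)$ with $\tilde\xi_{\tilde a} = \xi$, $\tilde p_{\tilde a} = k$, and all the asserted features. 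Substituting, and putting $a = \tilde a + 1$, $\xi_a = \gamma$, $p_a = n$, $\alpha_a = 1$, $\beta_a = \beta$, $b^*_a = b^*$, $p_{a-1} = k$, together with $\xi_j = \tilde\xi_j$, $p_j = \tilde p_j$, $b^*_j = \tilde b^*_j$, $\alpha_j = \alpha\,\tilde\alpha_j$, $\beta_j = \alpha\,\tilde\beta_j$ for $j \le a-1$, yields \eqref{E:1.5.1}. The asserted constraints are then verified term by term: all coefficients remain in $[0,1]$, being products of numbers in $[0,1]$; $b^*_a \in B_{\ell_1(\Gamma_{p_a-1}\setminus\Gamma_{p_{a-1}})}$ is literally the statement $b^* \in B_{\ell_1(\Gamma_{n-1}\setminus\Gamma_k)}$, while for $j \le a-1$ it is inherited from the hypothesis; and the chain of rank inequalities follows from the hypothesis, the identity $p_{a-1} = k = \tilde p_{\tilde a}$, and $p_{a-1} = k \le n-2 = p_a-2$. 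The recursion halts after finitely many steps, since along the chain the ranks $\rk(\xi_a) > \rk(\xi_{a-1}) > \cdots$ strictly decrease and must terminate at a type-$0$ functional or in $\Delta_1$.

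The second identity \eqref{E:1.5.1a} is merely an intermediate stage of this same unravelling: if one stops the substitution the instant $e^*_{\xi_{j_0}}$ appears, rather than expanding it once more by its own type-$0$/type-$1$ rule, what is left is $e^*_\gamma = \alpha_{j_0}\, e^*_{\gammab_{j_0}} + \sum_{j=j_0+1}^{a}\big(\alpha_j\, d^*_{\xi_j} + \beta_j\, P^*_{(p_{j-1},p_j)}(b^*_j)\big)$ with $\gammab_{j_0} = \xi_{j_0}$. The coefficient multiplying $e^*_{\xi_{j_0}}$ is exactly $\alpha_{j_0}$: expanding that factor one further step produces a leading term $d^*_{\xi_{j_0}}$ which inherits precisely the coefficient $\alpha_{j_0}$ recorded in \eqref{E:1.5.1}. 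Equivalently, one may re-run the previous paragraph with $\xi_{j_0}$ in the role of $\gamma$.

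I expect no genuine obstacle; this is a bookkeeping induction. The one point needing care --- where a slip would hide --- is keeping the off-by-one indexing straight: a type-$1$ element of rank $n$ carries the truncation $P^*_{(k,n-1]}$ (displayed as $P^*_{(k,n)}$ in \eqref{E:1.5.1}), this forces $k \le n-2$ so that the gaps $p_{j-1} < p_j-1$ are real, and the chain may degenerate in $\Delta_1$, where the corresponding $b^*_j$ and $\beta_j$ both vanish.
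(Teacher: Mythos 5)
Your proposal is correct and spells out, in full detail, exactly the "easy induction" the paper gestures at when it says "By iterating we eventually arrive (after finitely many steps) to a functional of type $0$. By an easy induction argument we therefore obtain." The bookkeeping — absorbing the factor $\alpha$ multiplicatively into the earlier $\alpha_j,\beta_j$, appending the tail term with $\alpha_a=1$, $p_{a-1}=k$, $p_a=n$, noting $k\le n-2$ so the gap $p_{a-1}<p_a-1$ is genuine, and reading \eqref{E:1.5.1a} as the partial unravelling — is all accurate; $\gammab_{j_0}$ in the statement is indeed just $\xi_{j_0}$.
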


 We call the representations in \eqref{E:1.5.1} and \eqref{E:1.5.1a}  {\em the analysis of $\gamma$} and {\em partial analysis of $\gamma$}, respectively   and let
 $\cuts(\gamma)=\{p_1,p_2,\ldots p_a\}$,
   which we call the  {\em set  of cuts of $\gamma$}.


\section{Embedding background and other preliminaries}\label{S:3}

Our constructions will depend heavily on some known embedding theorems.
We review these in this section and add a bit more to facilitate their use.
M.~Zippin \cite{Z2} proved that if $X^*$ is separable, then $X$ embeds into a
space with a shrinking basis.
So, in proving Theorem~A, we could begin with such a space.
However, to  make our construction work, we need a quantified version
of this theorem which appears in \cite{FOSZ}.
For Theorem~C, we need a quantified reflexive version \cite{OSZ2}.
We begin with some notation and terminology.

Let $\bE = (E_i)_{i=1}^\infty$ be an FDD for a Banach space $Z$.
$c_{00} (\oplus_{i=1}^\infty E_i)$ denotes the linear span of the $E_i$'s
and if $B\subseteq \N$, $c_{00}(\oplus_{i\in B} E_i)$ is the linear
span of the $E_i$'s for $i\!\in\! B$.
{\em $P_n = P_n^{\bE} :Z\to E_n$ is the $n^{th}$ coordinate projection
for the FDD\/}, i.e., $P_n(z) = z_n$ if $z = \sum_{i=1}^\infty z_i\!\in\! Z$
with $z_i\!\in\! E_i$ for all $i$.
For a finite set or interval $A\subseteq \N$,
$P_A = P_A^{\bE} \equiv \sum_{n\in A} P_n^{\bE}$.
The {\em projection constant\/} of $(E_n)$ in $Z$ is
\begin{equation*}
K = K(\bE,Z) = \sup \left\{ \|P_{[m,n]}^{\bE}\| : m\!\le\! n\right\}\ .
\end{equation*}
$\bE$ is {\em bimonotone\/} if $K(\bE,Z) =1$.

The vector space $c_{00} (\oplus_{i=1}^\infty E_i^*)$, where $E_i^*$ is
the dual space of $E_i$, is naturally identified as a $\omega^*$-dense
subspace of $Z^*$.
Note that the embedding of $E_i^*$ into $Z^*$ is not, in general,
an isometry unless $K(\bE,Z)=1$.
Now we will often be dealing with a bimonotone FDD (via renorming) but
when not we will consider $E_i^*$ to have the norm it inherits as a
subspace of $Z^*$.
We write $Z^{(*)} = [c_{00} (\oplus_{i=1}^\infty E_i^*)]$.
So $Z^{(*)} = Z^*$ if $(E_i)_{i=1}^\infty$ is shrinking, and then
$\bE^*= (E_i^*)_{i=1}^\infty$ is a boundedly complete FDD for $Z^*$.

For $z\!\in\! c_{00} (\oplus_{i=1}^\infty E_i)$ the {\em support of $z$,
$\supp_{\bE}(z)$}, is given by $\supp_{\bE}(z) = \{n:P_n^{\bE} (z) \ne0\}$, and
the {\em range of $z$}, $\ran_{\bE}(z)$ is the smallest interval $[m,n]$
in $\N$ containing $\supp_{\bE} (z)$.

A sequence $(z_i)_{i=1}^\ell$, where $\ell \!\in\! \N$ or $\ell=\infty$,
in $c_{00} (\oplus_{i=1}^\infty E_i)$ is called a {\em block sequence\/}
of $(E_i)$ if $\max \supp_{\bE} (z_n) \!<\! \min \supp_{\bE} (z_{n+1})$ for
all $n\!<\!\ell$.
We write $z_n \!<\! m$ to denote $\max \supp_{\bE}(z_n) \!<\! m$ and
$z_n > m$ is defined by $\min\supp_{\bE}(z_n) > m$.

\begin{defn}\label{D:OSZ1}\cite{OSZ1}
Let $Z$ be a Banach space with an FDD $\bE = (E_i)_{i=1}^\infty$.
Let $V$ be a Banach space with a normalized 1-unconditional basis
$(v_i)_{i=1}^\infty$, and let $1\!\le\! C \!<\!\infty$.
We say that {\em $(E_n)_{n=1}^\infty$ satisfies subsequential $C$-$V$-upper
estimates\/} if whenever $(z_i)_{i=1}^\infty$ is a normalized block
sequence of $\bE$ with $m_i = \min \supp_{\bE} (z_i)$, $i\!\in\!\N$,
then {\em $(z_i)_{i=1}^\infty$ is $C$-dominated by $(v_{m_i})_{i=1}^\infty$}.
Precisely, for all $(a_i)_{i=1}^\infty \subseteq \R$,
$$\Big\| \sum_{i=1}^\infty a_i z_i\Big\| \!\le\!
C \Big\| \sum_{i=1}^\infty a_i v_{m_i}\Big\|\ .$$
Similarly, {\em $(E_n)_{n=1}^\infty$ satisfies subsequential
$C$-$V$-lower estimates\/} if every such $(z_i)_{i=1}^\infty$
$C$-dominates $(v_{m_i})_{i=1}^\infty$.

We say that  {\em $(E_n)_{n=1}^\infty$ satisfies subsequential $V$-upper
estimates\/}  or  {\em subsequential $V$-lower
estimates\/}  if there exists a $C\ge 1$ so  that   $(E_n)_{n=1}^\infty$ satisfies subsequential $C$-$V$-upper
estimates\   or  subsequential $C$-$V$-lower
estimates, respectively.
\end{defn}

These are dual properties.
If $(v_i^*)_{i=1}^\infty$ are the biorthogonal functionals of
$(v_i)_{i=1}^\infty$ we define subsequential $V^*$-upper/lower estimates
to mean as above with respect to $(v_i^*)_{i=1}^\infty$.

\begin{prop}\label{P:OSZ1}\cite[Proposition 2.14]{OSZ1}
Let $Z$ have a bimonotone {\rm FDD} $(E_i)_{i=1}^\infty$ and let $V$ be a
Banach space with a normalized 1-unconditional basis $(v_i)_{i=1}^\infty$
with biorthogonal functionals $(v_n^*)_{n=1}^\infty$.
Let $1\!\le\! C\!<\!\infty$.
The following are equivalent.
\begin{itemize}
\item[a)] $(E_i)_{i=1}^\infty$ satisfies subsequential $C$-$V$-upper estimates
in $Z$.
\item[b)] $(E_i^*)_{i=1}^\infty$ satisfies subsequential $C$-$V^*$-lower
estimates in $Z^{(*)}$.
\end{itemize}
Moreover, the equivalence holds if we interchange ``upper'' with ``lower''
in {\rm a)} and {\rm b)}.
If the {\rm FDD} $(E_i)_{i=1}^\infty$ is not bimonotone the proposition
still holds but not with the same constants~$C$.
These changes depend upon $K(\bE,Z)$.
\end{prop}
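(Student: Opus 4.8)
The plan is to deduce the equivalence from a duality argument, using two standing facts about a bimonotone FDD $\bE=(E_i)$ in $Z$: first, that the subspace $c_{00}(\oplus_i E_i^*)$, carrying its $Z^*$-norm, $1$-norms $Z$; second, that for every interval $I\subseteq\N$ the projection $P^{\bE}_I$ on $Z$ and its adjoint $P^{\bE^*}_I$ on $Z^{(*)}$ both have norm one. Since every inequality in sight involves only finitely many scalars, I would first reduce to \emph{finite} block sequences, the general statement following by a routine limiting argument. Whenever I produce a norming vector or functional I will allow an error $\varepsilon>0$ and let $\varepsilon\to0$ at the end.

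\emph{a)\,$\Rightarrow$\,b).} Let $(z_i^*)_{i=1}^k$ be a normalized block sequence of $\bE^*$ in $Z^{(*)}$, $n_i=\min\supp_{\bE^*}(z_i^*)$, and fix scalars $(a_i)$. By $1$-unconditionality of $(v_i)$ I can write $\|\sum_i a_iv^*_{n_i}\|_{V^*}=\sup\{\sum_i a_ib_i:\|\sum_i b_iv_{n_i}\|_V\le1,\ a_ib_i\ge0\}$. For a fixed such $(b_i)$ I would, for each $i$, take an almost-norming vector for $z_i^*$, push it through the norm-one projection $P^{\bE}_{\ran_{\bE^*}(z_i^*)}$, and perturb it by a tiny vector of $E_{n_i}$, obtaining $x_i\in Z$ with $\supp_{\bE}(x_i)\subseteq\ran_{\bE^*}(z_i^*)$, $\|x_i\|\le1$, $\langle z_i^*,x_i\rangle>1-\varepsilon$, and $\min\supp_{\bE}(x_i)=n_i$ (the last adjustment is legitimate since $z_i^*$ does not vanish on $E_{n_i}$, so $E_{n_i}\neq\{0\}$). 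Then $(x_i)$ is a block sequence of $\bE$ with pairwise disjoint ranges, so $\langle\sum_i a_iz_i^*,\sum_i b_ix_i\rangle=\sum_i a_ib_i\langle z_i^*,x_i\rangle\ge(1-\varepsilon)\sum_i a_ib_i$, while hypothesis a) applied to $(x_i/\|x_i\|)$, together with $1$-unconditionality, gives $\|\sum_i b_ix_i\|\le C\|\sum_i b_iv_{n_i}\|_V\le C$. Dividing and letting $\varepsilon\to0$, then taking the supremum over $(b_i)$, gives $\|\sum_i a_iv^*_{n_i}\|_{V^*}\le C\|\sum_i a_iz_i^*\|$, which is the asserted $C$-$V^*$-lower estimate.

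\emph{b)\,$\Rightarrow$\,a).} Let $(y_i)_{i=1}^k$ be a normalized block sequence of $\bE$, $m_i=\min\supp_{\bE}(y_i)$, and fix $(a_i)$. I would take $z^*\in c_{00}(\oplus_i E_i^*)$, $\|z^*\|_{Z^*}\le1$, $(1-\varepsilon)$-norming $\sum_i a_iy_i$, use bimonotonicity to assume $\supp_{\bE^*}(z^*)\subseteq[m_1,N]$ with $N=\max\supp_{\bE}(y_k)$, and perturb by tiny functionals in the $E_{m_i}^*$ so that $P^{\bE^*}_{m_i}(z^*)\neq0$ for every $i$. The crucial move is then to split $z^*$ along the \emph{gap-filled} intervals $I_i:=[m_i,m_{i+1}-1]$ ($m_{k+1}-1:=N$), which partition $[m_1,N]$, rather than along the ranges $\ran_{\bE}(y_i)$; putting $z_i^*:=P^{\bE^*}_{I_i}(z^*)$ one has $\sum_i z_i^*=z^*$, each $\|z_i^*\|_{Z^{(*)}}\le1$, and — since $\ran_{\bE}(y_i)\subseteq I_i$ — $\langle z^*,y_i\rangle=\langle z_i^*,y_i\rangle$ with all cross terms vanishing, and $\min\supp_{\bE^*}(z_i^*)=m_i$. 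With $c_i:=\|z_i^*\|_{Z^{(*)}}$ (all positive, after the perturbation) this gives $(1-\varepsilon)\|\sum_i a_iy_i\|\le\sum_i a_i\langle z_i^*,y_i\rangle\le\sum_i|a_i|c_i\le\|\sum_i c_iv^*_{m_i}\|_{V^*}\,\|\sum_i a_iv_{m_i}\|_V$, where the last step dualizes in $V$ via $1$-unconditionality; and hypothesis b) applied to $(z_i^*/c_i)$ bounds $\|\sum_i c_iv^*_{m_i}\|_{V^*}$ by $C\|\sum_i z_i^*\|_{Z^{(*)}}=C\|z^*\|\le C$. Letting $\varepsilon\to0$ yields $\|\sum_i a_iy_i\|\le C\|\sum_i a_iv_{m_i}\|_V$.

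I expect the splitting in this second direction to be the main obstacle: the obvious choice $z_i^*=P^{\bE^*}_{\ran_{\bE}(y_i)}(z^*)$ would force a projection $P^{\bE^*}_{\bigcup_i\ran_{\bE}(y_i)}$ onto a union of many intervals, and the norm of such a projection is not uniformly bounded even for a bimonotone FDD; filling the gaps removes this difficulty. A secondary but unavoidable point is the small perturbation forcing $\min\supp(x_i)=n_i$ (resp. $\min\supp(z_i^*)=m_i$): without it one only gets indices $\ell_i\ge n_i$, and $\|\sum b_iv_{\ell_i}\|_V$ is in general not comparable to $\|\sum b_iv_{n_i}\|_V$ for a $1$-unconditional basis. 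For the ``moreover'' part, interchanging ``upper'' and ``lower'' amounts to running the same two arguments with $\bE$ replaced by $\bE^*$, which is again bimonotone in $Z^{(*)}$ and satisfies $(Z^{(*)})^{(*)}=Z$ isometrically. Finally, when $\bE$ is not bimonotone, the only changes are that restricting a norming vector or functional to an interval costs a factor $K(\bE,Z)$ and that $c_{00}(\oplus_i E_i^*)$ only $K(\bE,Z)$-norms $Z$; propagating these factors through the two arguments produces the constant changes mentioned in the statement.
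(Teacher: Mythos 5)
Your proof is correct. Since the paper merely cites this proposition from \cite{OSZ1} without reproducing a proof, there is no in-text argument to compare against; but your two-sided duality argument — almost-norming vectors pushed through norm-one interval projections for a)\,$\Rightarrow$\,b), and the crucial switch to the gap-filled intervals $I_i=[m_i,m_{i+1}-1]$ (so that the pieces $z_i^*=P^{\bE^*}_{I_i}(z^*)$ genuinely sum to $z^*$ and have norm at most one, rather than projecting onto a union of disjoint blocks whose projection norm is uncontrolled) for b)\,$\Rightarrow$\,a) — is exactly the standard proof of this duality, and you have correctly identified and handled the two technical points where a careless version would break (the small perturbations forcing $\min\supp$ to land at $n_i$ resp.\ $m_i$, and the gap-filling). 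The closing remarks about interchanging upper/lower by working in $Z^{(*)}$, and about propagating factors of $K(\bE,Z)$ in the non-bimonotone case, are also accurate.
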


Recall that $A\subseteq B_{Z^*}$ is {\em $d$-norming for $Z$}
($0\!<\! d \!\le\! 1$) if for all $z\!\in\! Z$,
$$d\|z\| \!\le\! \sup \{ |z^* (z)| : z^* \!\in\! A\}\ .$$

We  will need a characterization of subsequential $V$-upper
estimates obtained from  norming sets.

\begin{prop}\label{P:3.3}
Let $Z$ have an {\rm FDD} $\bE = (E_i)_{i=1}^\infty$ and let $V$ be a
Banach space with a normalized 1-unconditional basis $(v_i)_{i=1}^\infty$.
Let $0\!<\!d\!\le\! 1$ and let $A\subseteq B_{Z^*}$ be $d$-norming for $Z$.
The following are equivalent.
\begin{itemize}
\item[a)] $(E_i)_{i=1}^\infty$ satisfies subsequential $V$-upper estimates.
\item[b)] There exists $C\!<\!\infty$ so that for all $z^*\!\in\! A$ and any choice
of $k$ and $1\!\le\! n_1 \!<\! \cdots \!<\! n_{k+1}$ in $\N$,
$$\Big\| \sum_{i=1}^k \| z^* \circ P_{[n_i,n_{i+1})}^{\bE}\|
v_{n_i}^*\Big\| \!\le\! C\ .$$
Moreover, if $(E_i)_{i=1}^\infty$ is bimonotone, then $\a') \To \b') \To \b'')
\To \a'')$  where
\item[$\a')$]
$(E_i)_{i=1}^\infty$ satisfies subsequential $C$-$V$-upper estimates.
\item[$\b')$]
For every $x^*\!\in\! S_{Z^*}$ and any choice of $k$ and $1\!\le\! n_1 \!<\!n_2 \!<\! \cdots
\!<\! n_{k+1}$ in $\N$,
$$\Big\| \sum_{i=1}^k \| z^* \circ P_{[n_i,n_{i+1})}^{\bE} \|
v_{n_i}^*\Big\| \!\le\! C\ .$$
\item[$\b'')$]
For every $z^*\!\in\! A$ and any choice of $k$ and $1\!\le\! n_1 \!<\! \cdots \!<\! n_{k+1}$
in $\N$,
$$\Big\| \sum_{i=1}^k \| z^* \circ P_{[n_i,n_{i+1})}^{\bE} \|
v_{n_i}^*\Big\| \!\le\! C\ .$$
\item[$\a'')$]
$(E_i)_{i=1}^\infty$ satisfies subsequential $Cd^{-1}$-$V$-upper estimates.
\end{itemize}
\end{prop}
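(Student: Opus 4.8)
The plan is to establish the main equivalence a) $\Leftrightarrow$ b) and then the cycle $\a') \To \b') \To \b'') \To \a'')$, treating the latter as a quantified refinement of the former. The key structural observation throughout is a duality between block sequences of $\bE$ in $Z$ and ``flattened'' functionals on $Z^{(*)}$: given a normalized block sequence $(z_i)$ with $m_i = \min\supp_\bE(z_i)$, and scalars $(a_i)$, one estimates $\|\sum a_i z_i\|$ by testing against $z^* \in A$, and since each $z^*$ restricted to the interval $[m_i,m_{i+1})$ picks out essentially $a_i z^*(z_i)$, the quantity $\|z^* \circ P_{[n_i,n_{i+1})}^\bE\|$ is exactly the right object to bound the contribution of the $i$-th block. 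So the translation mechanism is: choosing $n_i$ between consecutive supports and noting $\supp_\bE(z_i) \subseteq [n_i, n_{i+1})$ where we may take $n_i = m_i$.

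First I would prove b) $\To$ a). Assume the bound in b) holds with constant $C$ for all $z^* \in A$. Let $(z_i)$ be a normalized block sequence with $m_i = \min\supp_\bE(z_i)$, fix scalars $(a_i)$ with finite support, and pick $z^* \in A$. Set $n_i = m_i$ for the indices appearing and $n_{k+1} = \max\supp_\bE(z_k)+1$; then $z^*(\sum a_i z_i) = \sum_i z^*(P_{[n_i,n_{i+1})}^\bE(a_i z_i)) \le \sum_i |a_i| \|z^* \circ P_{[n_i,n_{i+1})}^\bE\|$. The latter sum is $\big\langle (\|z^*\circ P_{[n_i,n_{i+1})}^\bE\|)_i, (|a_i|)_i\big\rangle$, which by Hölder for the pair $V^*, V$ (using $1$-unconditionality of $(v_i)$ and the fact that $(v_i^*)$ are the biorthogonal functionals, so $\|\sum |a_i| v_{m_i}\| = \|\sum a_i v_{m_i}\|$ by unconditionality) is at most $\|\sum_i \|z^*\circ P_{[n_i,n_{i+1})}^\bE\| v_{m_i}^*\| \cdot \|\sum_i |a_i| v_{m_i}\| \le C\|\sum_i a_i v_{m_i}\|$. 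Taking the supremum over $z^* \in A$ and using that $A$ is $d$-norming yields $d\|\sum a_i z_i\| \le C\|\sum a_i v_{m_i}\|$, i.e. subsequential $Cd^{-1}$-$V$-upper estimates; in particular a) holds. This simultaneously proves $\b'') \To \a'')$, since there the constant bookkeeping gives exactly $Cd^{-1}$.

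Next, for a) $\To$ b): assume $(E_i)$ satisfies subsequential $C_0$-$V$-upper estimates for some $C_0$. Fix $z^* \in A$ and $1 \le n_1 < \cdots < n_{k+1}$. I need to bound $\|\sum_{i=1}^k \|z^* \circ P_{[n_i,n_{i+1})}^\bE\| v_{n_i}^*\|$. By duality in $V$, this norm equals $\sup\{\sum_i b_i \|z^*\circ P_{[n_i,n_{i+1})}^\bE\| : \|\sum b_i v_{n_i}\| \le 1, b_i \ge 0\}$ (again $1$-unconditionality lets us take $b_i \ge 0$). For each $i$ choose $x_i \in c_{00}(\oplus_{j\in[n_i,n_{i+1})} E_j)$ with $\|x_i\| \le 1$ and $z^*(x_i) \ge \|z^*\circ P_{[n_i,n_{i+1})}^\bE\| - \vp 2^{-i}$; WLOG $z^*(x_i) \ge 0$. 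Then $(x_i)$ is a block sequence with $\min\supp_\bE(x_i) \ge n_i$; after normalizing and applying the $V$-upper estimate together with $1$-unconditionality and the lattice monotonicity of $\|\cdot\|_V$ (to pass from $v_{\min\supp(x_i)}$ back to $v_{n_i}$, since $n_i \le \min\supp_\bE(x_i)$ — here $1$-unconditionality gives that a ``later'' basis vector has norm at least that of an ``earlier'' one only if $V$ has that property; more carefully, one uses that $\|\sum b_i v_{\min\supp(x_i)}\|$ relates to $\|\sum b_i v_{n_i}\|$ and this is the point where bimonotonicity/normalization is invoked). Summing, $\sum_i b_i z^*(x_i) = z^*(\sum b_i x_i) \le \|z^*\| \cdot \|\sum b_i x_i\| \le C_0 \|\sum b_i v_{n_i}\| \le C_0$, and letting $\vp \to 0$ gives b) with $C = C_0$ (more honestly $C \approx C_0$ after handling the index shift). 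The analogous argument starting from $\a')$ and testing only against $S_{Z^*}$ — which is $1$-norming — yields $\b')$ with the same $C$, and $\b') \To \b''$ is immediate since $A \subseteq B_{Z^*}$.

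The main obstacle I expect is the index-shift bookkeeping in a) $\To$ b) (and $\a')\To\b'$): the $V$-upper estimate is phrased in terms of $v_{\min\supp_\bE(z_i)}$, but b) demands an estimate with $v_{n_i}$ where only $n_i \le \min\supp_\bE(x_i)$ is guaranteed. Resolving this cleanly requires knowing that subsequential $V$-upper estimates are stable under such shifts — intuitively clear because a later, ``smaller'' basis vector can only help an upper estimate, but it needs to be stated precisely using the $1$-unconditionality of $(v_i)$ together with the fact (implicit in the setup, and exactly why bimonotonicity matters) that one can realize the $x_i$ with $\min\supp$ as close to $n_i$ as needed or absorb the gap into the constant. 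The second, milder obstacle is the careful tracking of the $d$-norming constant so that b) $\To \a'')$ lands exactly on $Cd^{-1}$ rather than something weaker; this is just arithmetic but must be done without slack.
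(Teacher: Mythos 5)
Your treatment of $b)\To a)$ and $b'')\To a'')$ is correct and is essentially the paper's argument: choose $z^*\in A$ norming $\sum a_i z_i$ to within a factor $d$, split along $[m_i,m_{i+1})$, apply H\"older in the $V$--$V^*$ pairing, and use $1$-unconditionality to drop the absolute values. The constant bookkeeping lands on $Cd^{-1}$ as required.

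The direction $a)\To b)$ (and $a')\To b')$) has a real gap, and it is exactly the index-shift issue you flagged, but the fix you lean on does not work. You write that one can ``pass from $v_{\min\supp(x_i)}$ back to $v_{n_i}$'' using $1$-unconditionality together with ``lattice monotonicity'' or the intuition that a later basis vector is ``smaller.'' That is false in general: a normalized $1$-unconditional basis need not satisfy $\|\sum b_i v_{m_i}\| \le \|\sum b_i v_{n_i}\|$ when $m_i \ge n_i$; that would be a spreading-type property which is not assumed of $V$. Bimonotonicity of $\bE$ is a hypothesis on $Z$, not on $V$, and does not help here either. So as written, the inequality $\|\sum b_i x_i\| \le C_0\|\sum b_i v_{n_i}\|$ does not follow from the subsequential upper estimate, because that estimate only gives $v_{\min\supp_{\bE}(x_i)}$, and $\min\supp_{\bE}(x_i)$ may strictly exceed $n_i$.

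The correct resolution is a genuine perturbation, not a monotonicity argument. The paper proves $a')\To b')$ by citing Proposition~\ref{P:OSZ1}: $a')$ gives subsequential $C$-$V^*$-lower estimates for $(E_i^*)$ in $Z^{(*)}$, one views $(z^*\circ P^{\bE}_{[n_i,n_{i+1})})_i$ as a block sequence in $Z^{(*)}$ with sum of norm $\le 1$ (bimonotonicity), and then one perturbs $z^*$ by adding tiny vectors in $E_{n_i}^*$ so that each block's $\min\supp_{\bE^*}$ is exactly $n_i$ before applying the lower estimate. A primal analogue of the same idea would repair your argument: perturb each $x_i$ by $\vp_i e_i$ with $e_i\in S_{E_{n_i}}$ to force $\min\supp_{\bE}(x_i)=n_i$ while changing $\|x_i\|$ and $z^*(x_i)$ by arbitrarily little, then let $\vp\to 0$. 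You do mention this option in passing, but you never carry it out and instead present the (incorrect) shift-stability route as the mechanism, so the proof as proposed does not close. One further difference worth noting: the paper first renorms so that $\bE$ is bimonotone and then only proves the quantified cycle $a')\To b')\To b'')\To a'')$, observing that the unquantified equivalence $a)\iff b)$ follows; you prove the two directions separately, which costs nothing but duplicates work.
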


\begin{proof}
By renorming, we can assume that $(E_i)_{i=1}^\infty$ is bimonotone
and thus we need only prove the ``moreover'' statement.

\noindent $\a') \To \b')$ follows from Proposition~\ref{P:OSZ1}.
Indeed, $(z^* \circ P_{[n_i,n_{i+1})}^{\bE})_{i=1}^k$ is a block
sequence of $(E_i^*)$, whose sum has norm at most 1, and $\min
\supp_{\bE^*} (z^* \circ P_{[n_i,n_{i+1})}^{\bE})$ can be assumed
equal to $n_i$ by standard perturbation arguments.

\noindent $\b') \To \b'')$
is trivial.

\noindent $\b'') \To \a'')$.
Let $(z_i)_{i=1}^n$ be a normalized block sequence of $(E_i)$ with
$m_i = \min \supp_{\bE} (z_i)$ for $i\!\le\! n$.
Let $m_{n+1} = \max \supp_{\bE} (z_n) +1$.
Let $(a_i)_1^n \subseteq \R$ and choose $z^* \!\in\! A$ with
$$\Big| z^* \Big( \sum_{i=1}^n a_i z_i\Big) \Big|
\ge d\Big\| \sum_{i=1}^n a_i z_i \Big\|\ .$$
Thus,
\begin{align*}
\Big\| \sum_{i=1}^n a_i z_i\Big\|
 &\le d^{-1} \Big| \sum_{i=1}^n a_i z^* (z_i)\Big|\\
 &=
d^{-1} \Big| \sum_{i=1}^n a_i z^* \circ P_{[m_i,m_{i+1})}^{\bE}(z_i)\Big|\\
 &\le d^{-1} \sum_{i=1}^n |a_i|\, \|z^* \circ P_{[m_i,m_{i+1})}^{\bE}\|\\
 &=
d^{-1} \Big( \sum_{i=1}^n\| z^* \circ P_{[m_i,m_{i+1})}^{\bE}\| v_{m_i}^*\Big)
\Big( \sum_{i=1}^n |a_i| v_{m_i}\Big)
\le
C\, d^{-1} \Big\| \sum_{i=1}^n a_i v_{m_i}\Big\|,\  \text{by b'')}\ .
\end{align*}
\end{proof}

We recall some terminology concerning finite subsets of $\N$ which
can be found for example in \cite{AT} or \cite{OTW}.

\begin{defn}\label{D:pointwisetopology}
$[\N]^{<\omega}$ denotes the set of all finite subsets of $\N$ under
the {\em pointwise topology\/}, i.e., the topology it inherits as a subset
of $\{0,1\}^{\N}$ with the product topology.
Let $\cA \subseteq [\N]^{<\omega}$.
We say $\cA$ is
\begin{itemize}
\item[i)] {\em compact\/} if it is compact in the pointwise topology,
\item[ii)] {\em hereditary\/} if for all $A\!\in\! \cA$, if $B\subseteq A$
then $B\!\in\! \cA$,
\item[iii)] {\em spreading\/} if for all $A = (a_1,\ldots,a_n)\!\in\! \cA$
with $a_1 \!<\! a_2 \!<\!\cdots \!<\! a_n$ and all $B=(b_1,\ldots,b_n)\!\in\! [\N]^{<\omega}$
with $b_1 \!<\! b_2 \!<\!\cdots \!<\! b_n$ and $a_i \!\le\! b_i$ for $i\!\le\! n$, $B\!\in\!\cA$,
such a $B$ is called a {\em spread\/} of $A$,
\item[iv)] {\em regular\/} if $\{n\}\!\in\! \cA$ for all $n\!\in\!\N$ and $\cA$
is compact, hereditary and spreading.
\end{itemize}
\end{defn}
We note that if $\cA\subset [\N]^{<\omega}$ is relatively compact,
or equivalently if $\cA$ does not contain an infinite strictly
increasing chain,  then there is a regular family,
$\cB\subset[\N]^{<\omega}$, containing $\cA$.

\begin{defn}\label{D:A-admissible}
Let $\cA\subseteq [\N]^{<\omega}$ be a regular family.
A sequence of sets in $[\N]^{<\omega}$, $A_1 \!<\! A_2 \!<\! \cdots \!<\! A_n$
(i.e., $\max A_i \!<\! \min A_{i+1}$ for $i\!<\!n$) is called
{\em $\cA$-admissible\/} if $(\min A_i)_{i=1}^n \!\in\! \cA$.
\end{defn}

\begin{tsirelson}\label{D:tsirelson}
Let $\cA \subseteq [\N]^{<\omega}$ be a regular family of sets and let
$0\!<\!c\!<\!1$.
The Tsirelson space $\Tac$ is the completion of $c_{00}$ under the
norm $\|\cdot\|_{\cA,c}$ which is given, implicitly, by the equation
$$\|x\|_{\cA,c} = \|x\|_\infty\vee \sup \Big\{ \sum_{i=1}^n c
\|A_i x\|_{\cA,c} : n\!\in\! \N\ ,\
\text{ and }\ A_1 \!<\! \cdots \!<\! A_n\text{ is $\cA$-admissible}\Big\}\ .$$

Here $A_i x = x|_{A_i}$. The unit vector basis  $(t_i)$  of $c_{00}$
is always a shrinking  and 1-unconditional basis for $\Tac$.  If the
Cantor - Bendixson index of $\cA$ (c.f.  \cite{OTW} or \cite{AT}) is
at least $\omega$ then $\Tac$ does not contain any isomorphic copy
of $\ell_p$ or $c_0$, and hence $\Tac$ must also be reflexive as
every Banach space with an unconditional basis which does not
contain an isomorphic copy of $c_0$ or $\ell_1$ is reflexive.

If $\cA = S_\alpha$ is the $\alpha^{th}$-Schreier family of sets,
where $\alpha \!<\!\omega_1$, we denote $\Tac$ by $T_{c,\alpha}$.
For more on these spaces (see e.g., \cite{AT},
\cite{LTang},\cite{OSZ2} and the references therein). Let us recall
that,  for $n\in\N$, the spaces $T_{\alpha,c}$ and
$T_{\alpha^n,c^n}$ are naturally isomorphic
 (via the identity).
\end{tsirelson}

\begin{rem}
We will later use  the fact that if $X$ has an FDD $(E_i)_{i=1}^\infty$
satisfying subsequential $\Tac$-upper estimates for some regular
family $\cA$, then $(E_i)_{i=1}^\infty$ is shrinking.
Indeed every normalized block sequence of $(E_i)_{i=1}^\infty$ must then
be weakly null, since it is dominated by a weakly null sequence.
This is equivalent to $(E_i)_{i=1}^\infty$ being shrinking.
\end{rem}

Our embedding theorems, \ref{thm:FOSZ} and \ref{thm:OSZ2} below, refer
to the Szlenk index, $S_z(X)$, \cite{Sz}.
If $X$ is separable then $S_z(X)$ is an ordinal with $S_z(X)\!<\!\omega_1$
if and only if $X^*$ is separable.
Also $S_z(T_{c,\alpha}) = \omega^{\alpha\cdot\omega}$
 \cite[Proposition 7]{OSZ2}.
If $S_z(X) \!<\!\omega_1$ then $S_z(X) = \omega^\beta$ for some $\beta \!<\!\omega_1$.
Much has been written on the Szlenk index
(e.g.,  see \cite{AJO}, \cite{B2}, \cite{FOSZ}, \cite{G}, \cite{GKL},
\cite{JO}, \cite{L}, \cite{OSZ2}).

\begin{thm}\label{thm:FOSZ}\cite[Theorem 1.3]{FOSZ}
Let $\alpha \!<\!\omega_1$ and let $X$ be a Banach space with separable dual.
The following are equivalent.
\begin{itemize}
\item[a)] $S_z(X) \!\le\! \omega^{\alpha\cdot\omega}$.
\item[b)] $X$ embeds into a Banach space $Z$ having an {\rm FDD} which
satisfies subsequential $T_{c,\alpha}$-upper estimates, for some $0\!<\!c\!<\!1$.
\end{itemize}
\end{thm}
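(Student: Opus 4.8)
The plan is to prove the two implications separately; (b)$\Rightarrow$(a) is short, while (a)$\Rightarrow$(b) carries the real content, most of which is a single ordinal--combinatorial step.

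For (b)$\Rightarrow$(a): suppose $X$ embeds into $Z$, where $Z$ has an FDD $\bE=(E_i)$ satisfying subsequential $C$-$T_{c,\alpha}$-upper estimates. By the Remark following Definition~\ref{D:tsirelson}, $\bE$ is shrinking, so $Z^*=Z^{(*)}$ carries the boundedly complete FDD $\bE^*=(E_i^*)$, which by Proposition~\ref{P:OSZ1} satisfies subsequential $C$-$T_{c,\alpha}^*$-lower estimates. I would then run the usual weak$^*$-Szlenk-derivation estimate on $B_{Z^*}$: any $\varepsilon$-separated weak$^*$-null sequence can be perturbed to a block sequence of $\bE^*$, whose lower estimate forces the $\varepsilon$-derivation process to terminate within the ordinal $S_z(T_{c,\alpha})=\omega^{\alpha\cdot\omega}$, the constant $C$ and the possible failure of bimonotonicity costing only a passage to a fixed finite power, which is absorbed through the natural isomorphism between $T_{c,\alpha}$ and a Tsirelson space over an iterated Schreier family (Definition~\ref{D:tsirelson}). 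Hence $S_z(Z)\le\omega^{\alpha\cdot\omega}$, and since the Szlenk index does not increase under isomorphic embedding, $S_z(X)\le\omega^{\alpha\cdot\omega}$.

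For (a)$\Rightarrow$(b) I would insert an intermediate, FDD-free notion: $X$ satisfies \emph{subsequential $T_{c,\alpha}$-upper tree estimates} if there is $C$ so that every normalized weakly null tree in $X$, indexed by finite strictly increasing sequences, has a branch $(x_{(n_1)},x_{(n_1,n_2)},\dots)$ that is $C$-dominated by $(t_{n_1},t_{n_2},\dots)$, where $(t_i)$ is the $T_{c,\alpha}$-basis. Then (a)$\Rightarrow$(b) splits into (i) the equivalence of $S_z(X)\le\omega^{\alpha\cdot\omega}$ with $X$ satisfying subsequential $T_{c,\alpha}$-upper tree estimates for some $0<c<1$, and (ii) the embedding theorem of \cite{OSZ2}, which takes a space with separable dual having such a tree estimate and produces an embedding into a space $Z$ with an FDD satisfying subsequential $T_{c,\alpha}$-upper estimates. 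Step (ii) I would treat as a citable black box; internally it embeds $X$ into a space with a shrinking FDD (Zippin, \cite{Z2}), passes to a blocking via a ``killing the overlap'' argument, and renorms, the verification that the blocking satisfies subsequential $T_{c,\alpha}$-upper estimates being exactly of the norming-set form in Proposition~\ref{P:3.3}; the resulting FDD is automatically shrinking by the Remark after Definition~\ref{D:tsirelson}. The easy half of (i) just repeats the (b)$\Rightarrow$(a) argument, since a tree estimate gives weakly null trees branches dominated by subsequences of the shrinking basis of $T_{c,\alpha}$ and the weak$^*$-derivation then bounds $S_z(X)$ by $\omega^{\alpha\cdot\omega}$.

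The hard half of (i) is the substance of the theorem, and the main obstacle. From $S_z(X)\le\omega^{\alpha\cdot\omega}=\sup_n\omega^{\alpha\cdot n}$ one extracts, for each $k\in\N$, an $n_k$ such that the $1/k$-Szlenk index of $X$ is at most $\omega^{\alpha\cdot n_k}$. Using the duality between weakly null trees in $X$ and $\varepsilon$-separated weak$^*$-null trees in $B_{X^*}$, together with the description of the $T_{c,\alpha}$-norm through $S_\alpha$-admissible collections, I would convert this into the statement that every normalized weakly null tree in $X$ has a branch dominated, with a constant depending only on $k$, by a subsequence of the basis of the Tsirelson space built over the $n_k$-fold iterate of $S_\alpha$. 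A Ramsey/stabilization argument over branches then fixes the constant, and the natural isomorphism of $T_{c,\alpha}$ with these iterated-Schreier Tsirelson spaces (via the identity, at the cost of replacing $c$ by a power) collapses everything into one uniform subsequential $T_{c,\alpha}$-upper tree estimate. The two genuinely delicate points are (1) the stabilization needed to pass from ``every weakly null tree has some controlled branch'' to a single constant valid for all of them, and (2) the ordinal bookkeeping aligning iterates of $S_\alpha$ --- which account for the exponents $\omega^{\alpha\cdot n}$ --- with the matching Tsirelson norms, so that the dependence on $n_k$ gets absorbed by the identity isomorphism rather than accumulating. An alternative to step (ii), in the spirit of the present paper, is to construct $Z$ directly as a Bourgain--Delbaen-type space using Proposition~\ref{P:bimonotone} and the framework of Section~\ref{S:2}, giving a self-contained but less precise form of the implication.
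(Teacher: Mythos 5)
The paper does not reprove this theorem; it cites \cite[Theorem 1.3]{FOSZ} verbatim and uses it as a black box, so there is no paper-internal proof to compare against. What the paper does supply is the Corollary following Theorem \ref{P:bimonotone}: from Zippin's embedding \cite{Z2} and the implication $\mathrm{a)}\Rightarrow\mathrm{d)}$ of that theorem, one embeds $X$ into a space whose FDD satisfies subsequential $T_{c,\alpha}$-upper estimates for \emph{some} $\alpha<\omega_1$ and some $c$, without tracking the precise Szlenk ordinal; this is exactly the ``less exact version'' the authors announce. Your outline reconstructs, accurately, the strategy of the cited source: the intermediate FDD-free notion of subsequential $T_{c,\alpha}$-upper \emph{tree} estimates, the split of (a)$\Rightarrow$(b) into an ordinal/Ramsey step (Szlenk bound $\Rightarrow$ tree estimate) and the embedding machinery of \cite{OSZ2} (tree estimate $\Rightarrow$ FDD estimate), and the duality argument via Proposition \ref{P:OSZ1} plus the standard weak$^*$-derivation estimate for (b)$\Rightarrow$(a). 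You also correctly identify the paper's own self-contained weaker alternative. The two points you flag as ``genuinely delicate'' --- stabilizing the constant across weakly null trees and aligning the exponents $\omega^{\alpha\cdot n}$ with iterated-Schreier Tsirelson norms through the identity isomorphism --- are indeed where the substance of \cite{FOSZ} lies, and your sketch stops short of carrying them out; but since the statement is being cited rather than proved in this paper, that level of detail is the right calibration for this context.
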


\begin{thm}\label{thm:OSZ2}\cite[Theorem A]{OSZ2}
Let $\alpha \!<\!\omega_1$ and let $X$ be a separable reflexive Banach space.
The following are equivalent.
\begin{itemize}
\item[a)] $S_z(X) \!\le\! \omega^{\alpha\cdot\omega}$ and
$S_z(X^*) \!\le\! \omega^{\alpha\cdot\omega}$.
\item[b)] $X$ embeds into a Banach space $Z$ having an {\rm FDD} which
satisfies both subsequential $T_{c,\alpha}$-upper estimates and
subsequential $T_{c,\alpha}^*$-lower estimates, for some
$0\!<\!c\!<\!1$.
\end{itemize}
\end{thm}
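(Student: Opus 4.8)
The plan is to prove the two implications separately; $(b)\Rightarrow(a)$ is short, and all the difficulty is in $(a)\Rightarrow(b)$.

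For $(b)\Rightarrow(a)$: suppose $X$ embeds into $Z$, where $Z$ has an FDD $\bE=(E_i)$ satisfying subsequential $T_{c,\alpha}$-upper and $T_{c,\alpha}^*$-lower estimates. The upper estimates force $\bE$ to be shrinking (as in the Remark preceding Theorem \ref{thm:FOSZ}) and the lower estimates force it to be boundedly complete, so $Z$ and therefore $Z^*$ are reflexive with separable dual. Applying the implication $(b)\Rightarrow(a)$ of Theorem \ref{thm:FOSZ} to $Z$ itself gives $S_z(Z)\le\omega^{\alpha\cdot\omega}$, whence $S_z(X)\le\omega^{\alpha\cdot\omega}$ since the Szlenk index does not increase on subspaces. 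Dually, Proposition \ref{P:OSZ1}, applied with $V=T_{c,\alpha}^*$ (whose biorthogonal basis is $(t_i)$ by reflexivity of $T_{c,\alpha}$), converts the subsequential $T_{c,\alpha}^*$-lower estimates of $\bE$ into subsequential $T_{c,\alpha}$-upper estimates of $\bE^*$ in $Z^*$; hence $S_z(Z^*)\le\omega^{\alpha\cdot\omega}$ by Theorem \ref{thm:FOSZ} again, and since $X^*$ is a quotient of $Z^*$ we get $S_z(X^*)\le\omega^{\alpha\cdot\omega}$, the Szlenk index being monotone under quotients as well.

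For $(a)\Rightarrow(b)$ I would first reduce to a two-sided family of branch estimates on $X$. Since $S_z(X)\le\omega^{\alpha\cdot\omega}$, Theorem \ref{thm:FOSZ} embeds $X$ into $Z_1$ with a shrinking FDD satisfying subsequential $T_{c_1,\alpha}$-upper estimates; since $S_z(X^*)\le\omega^{\alpha\cdot\omega}$, it embeds $X^*$ into $Z_2$ with a shrinking FDD $\bE^{(2)}$ satisfying subsequential $T_{c_2,\alpha}$-upper estimates. Passing to $c=\max(c_1,c_2)<1$ is harmless, since $\|\cdot\|_{T_{c',\alpha}}\le\|\cdot\|_{T_{c,\alpha}}$ for $c'\le c$ and so an upper estimate only weakens; thus we may take $c_1=c_2=c$. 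Dualizing the second embedding and using that $X$ is reflexive, $X=X^{**}$ is a quotient of $Z_2^*$, and by Proposition \ref{P:OSZ1} the boundedly complete FDD $(\bE^{(2)})^*$ of $Z_2^*$ satisfies subsequential $T_{c,\alpha}^*$-lower estimates. Unwinding these two facts in terms of weakly null trees, and transporting trees between $X$ and $X^*$ (this is where reflexivity is used), one obtains a constant $C$ so that every normalized weakly null tree in $X$ has a branch satisfying a $C$-$T_{c,\alpha}$-upper estimate and also one satisfying a $C$-$T_{c,\alpha}^*$-lower estimate.

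The decisive step is the construction of $W$. Embed $X$ into a space $Z$ with a bimonotone shrinking FDD $\bE$ (Zippin's theorem \cite{Z2}). Using the two families of branch estimates one builds, simultaneously, a blocking $\bF=(F_j)$ of $\bE$ and a norming set $A$ for the superspace $W$ of $X$ that is being constructed, together with a ``killing the overlap'' perturbation, in such a way that condition b) of Proposition \ref{P:3.3} holds for $A$ with respect to $\bF$ and $V=T_{c,\alpha}$ --- yielding subsequential $T_{c,\alpha}$-upper estimates for $\bF$ in $W$ --- while the analogous bound holds for a norming set of $W^*$ with respect to $\bF^*$, which via Proposition \ref{P:3.3} and Proposition \ref{P:OSZ1} yields subsequential $T_{c,\alpha}^*$-lower estimates for $\bF$ in $W$. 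The upper estimate makes $\bF$ shrinking and the lower estimate makes it boundedly complete, so $W$ is automatically reflexive and (b) holds. The main obstacle is exactly this last step: one must choose a single blocking of $\bE$ coarse enough that the upper-estimate branches register on consecutive FDD blocks, yet compatible with the lower estimate, and one must kill at once the overlap between the two coordinate realizations of each $x\in X$ --- the subspace realization inherited from $Z_1$ and the quotient realization coming from $Z_2^*$. This is not a direct computation but a repeated-blocking / infinite-game argument in the spirit of \cite{OSZ1} and \cite{OSZ2}.
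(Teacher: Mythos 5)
There is no proof of Theorem \ref{thm:OSZ2} in this paper to compare against: the theorem is quoted verbatim as \cite[Theorem A]{OSZ2} and used as an external input (in Section~\ref{S:5}), so the paper's ``proof'' is simply the citation. Your attempt should therefore be read as a reconstruction of the argument in \cite{OSZ2}, not as a candidate for matching anything here.

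On its own terms, your $(b)\Rightarrow(a)$ is correct and complete: the $T_{c,\alpha}$-upper estimate makes the FDD shrinking, the $T_{c,\alpha}^*$-lower estimate makes it boundedly complete (using that $(t_i^*)$ is a boundedly complete basis for $T_{c,\alpha}^*$), Theorem~\ref{thm:FOSZ}$(b)\Rightarrow(a)$ bounds $S_z(Z)$, and Proposition~\ref{P:OSZ1} with $V=T_{c,\alpha}^*$ (so $V^*=T_{c,\alpha}$ by reflexivity) transfers the lower estimate of $\bE$ to an upper estimate of $\bE^*$ and thus bounds $S_z(Z^*)$; monotonicity of the Szlenk index under subspaces and quotients finishes it. Your $(a)\Rightarrow(b)$, however, stops well short of a proof. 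First, the reduction from two separate one-sided embeddings of $X$ and $X^*$ to a pair of \emph{tree} estimates on $X$ is nontrivial, and you do not explain the passage. More seriously, the crucial point is to obtain, for every normalized weakly null tree in $X$, a \emph{single} branch satisfying both the $T_{c,\alpha}$-upper and the $T_{c,\alpha}^*$-lower estimate (or, equivalently, a \emph{single} blocking of a containing FDD compatible with both); your phrase ``has a branch satisfying an upper estimate and also one satisfying a lower estimate'' allows two different branches, which is not enough and hides exactly where the game/Ramsey machinery of \cite{OSZ1,OSZ2} is needed. Finally, the ``killing the overlap'' construction of the superspace $W$ and the simultaneous blocking that makes the coordinate projections see the two branch estimates as genuine subsequential FDD estimates is stated as a goal rather than carried out. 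As written, the $(a)\Rightarrow(b)$ direction is an accurate road map of the \cite{OSZ2} strategy but not a proof; to close the gap you would need to supply the infinite-game/stabilization argument producing a single compatible blocking and then verify Proposition~\ref{P:3.3}~b) (and its lower-estimate analogue) for a norming set of the constructed space.
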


We note that the upper and lower estimates in both theorems are with
respect to the unit vector basis $(t_i)$ of $T_{c,\alpha}$ and its
biorthogonal sequence $(t_i^*)$, a basis for $T_{c,\alpha}^*$.

In order to use Theorem~\ref{thm:FOSZ} in our proof of Theorem~A, we need
to reformulate what it means for an FDD for $X$ to satisfy subsequential
$T_{c,\alpha}$-upper estimates in terms of the functionals in $X^*$.
We first need some more terminology.

\begin{defn}\label{D:c-decomposition}
Let $\bE = (E_i)_{i=1}^\infty$ be an FDD for a space $X$ and let $0\!<\!c\!<\!1$.
Let $x\!\in\! c_{00}(\oplus_{i=1}^\infty E_i)$.
A block sequence of $\bE$, $(x_1,\ldots,x_\ell)$, is called a
{\em $c$-decomposition of $x$} if
\begin{equation}\label{eq:c-decomposition}
x= \sum_{i=1}^\ell x_i\ \text{ and, for every $i\!\le\! \ell$, either }\
|\supp_{\bE} (x_i)| =1\ \text{ or }\ \|x_i\| \!\le\! c\ .
\end{equation}
\end{defn}

Clearly every such $x$ has a $c$-decomposition.
The {\em optimal $c$-decomposition of $x$} is defined as follows.
Set $n_1 = \min \supp_{\bE}(x)$ and assume $n_1 \!<\! n_2\!<\!\cdots \!<\! n_j$
have been defined.
Let
\begin{equation*}
n_{j+1} = \begin{cases}
n_j +1\ ,\ \text{ if }\ \|P_{n_j}^{\bE} (x)\| >c\ ,\\
\noalign{\vskip4pt}
\min \{ n : \|P_{[n_j,n]}^{\bE} (x)\| >c\}\ ,\ \text{ if }\
\|P_{n_j}^{\bE} (x)\| \!\le\! c\
\text{ and the ``min'' exists,}\\
\noalign{\vskip4pt}
1+\max \supp_{\bE} (x)\ ,\ \text{ otherwise.}
\end{cases}
\end{equation*}
There will be a smallest $\ell$ so that $n_{\ell+1} = 1+\max \supp_{\bE}(x)$.
We then set for $i\!\le\! \ell$,
$x_i = P_{[n_i,n_{i+1})}^{\bE} (x)$.
Clearly $(x_i)_{i=1}^\ell$ is a $c$-decomposition of $x$.
Moreover, and this will be important later, if
$(E_i)$ is bimonotone and
$j\!\le\! \lfloor \sfrac{\ell}2\rfloor$,
then $\|x_{2j-1} + x_{2j}\| > c$.

Let $\cA \subseteq [\N]^{<\omega}$ be regular.
We say that the {\em FDD $(E_i)_{i=1}^\infty$ for $X$ is $(c,\cA)$-admissible
in $X$} if every $x\!\in\! S_X\cap c_{00} (\oplus_{i=1}^\infty E_i)$ has an
$\cA$-admissible $c$-decomposition, $(x_i)_{i=1}^k$, where
$(\supp_{\bE} (x_i))_1^\ell$ is {\em $\cA$-admissible}, i.e.,
$(\min \supp_{\bE}(x_i))_{i=1}^\ell \!\in\! \cA$.

\begin{thm}\label{P:bimonotone}
Let $\bE = (E_i)_{i=1}^\infty$ be a bimonotone {\rm FDD} for a
Banach space $X$. The following statements are equivalent.
\begin{itemize}
\item[a)] $(E_i)$ is shrinking.
\item[b)] For all $0<c<1$ there exists a regular family
$\cA\subset[\N]^{<\omega}$ so that every $x^*\in B_{X^*}\cap
\coo(\oplus_{i=1}^\infty E_i^*)$ has an optimal $\cA$-admissible
$c$-decomposition.
\item[c)] There exists $D\subset B_{X^*}\cap\coo(\oplus_{i=1}^\infty
E_i^*)$, $0<c<d\leq1$ and a regular family
$\cA\subset[\N]^{<\omega}$, so that $D$ is $d$-norming for $X$, and
every $x^*\in D$ admits an $\cA$-admissible $c$-decomposition.
\item[d)] There exists $\alpha\!<\!\omega_1$, $0\!<\!c\!<\!1$, $1\!\leq\!C$,
and a subsequence $(t_{m_i})_{i=1}^\infty$ of the unit vector basis
for $T_{c,\alpha}$, so that $(E_i)_{i=1}^\infty$ satisfies
subsequential $C-(t_{m_i})_{i=1}^\infty$ upper estimates.
\end{itemize}
\end{thm}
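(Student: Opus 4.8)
I would prove the cyclic chain of implications a)$\To$b)$\To$c)$\To$d)$\To$a). The implication a)$\To$b) carries the one genuinely new idea, so I start there. Fix $0<c<1$. Since $\bE$ is bimonotone in $X$, the dual FDD $\bE^*=(E_i^*)$ is bimonotone in $X^*$, and, $\bE$ being shrinking, $\bE^*$ is moreover a boundedly complete FDD for $X^*$. Let $\cB$ be the family of all finite sets $(\min\supp_{\bE^*}(x_i^*))_{i=1}^\ell\ksubset\N$ arising as $x^*$ ranges over $B_{X^*}\cap\coo(\oplus_i E_i^*)$ and $(x_i^*)_{i=1}^\ell$ is the \emph{optimal} $c$-decomposition of $x^*$. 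If $\cB$ is relatively compact there is a regular family $\cA\supseteq\cB$, and by heredity every such $x^*$ has its optimal $c$-decomposition $\cA$-admissible, which is b). So suppose $\cB$ is not relatively compact; then its closure in $\{0,1\}^{\N}$ contains an infinite set $M=\{m_1<m_2<\cdots\}$, and after passing to a subsequence and relabelling we may pick $x^{*(k)}\in B_{X^*}\cap\coo(\oplus_i E_i^*)$ whose optimal $c$-decomposition $(x_i^{*(k)})_i$ satisfies $\min\supp_{\bE^*}(x_i^{*(k)})=m_i$ for $i\le k$. By bimonotonicity of $\bE^*$ and the remark preceding the theorem, $\|x_{2j-1}^{*(k)}+x_{2j}^{*(k)}\|>c$ for $j\le\lfloor k/2\rfloor$; put $y_j^{(k)}=x_{2j-1}^{*(k)}+x_{2j}^{*(k)}$, which (once $2j+1\le k$) lies in the fixed finite dimensional space $\oplus_{i\in[m_{2j-1},m_{2j+1})}E_i^*$, has $\|y_j^{(k)}\|\le1$, and satisfies $\|\sum_{j\le J}y_j^{(k)}\|\le1$ for all $J$ (all three by bimonotonicity, since these are interval restrictions of $x^{*(k)}$). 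A diagonal argument over $k$ produces $y_j=\lim_k y_j^{(k)}$ with $c\le\|y_j\|\le1$ and $\|\sum_{j\le J}y_j\|\le1$ for every $J$. Thus $(y_j)_{j=1}^\infty$ is a block sequence of $\bE^*$ with $\inf_j\|y_j\|\ge c$ and uniformly bounded partial sums; since $\bE^*$ is boundedly complete, $\sum_j y_j$ converges, forcing $\|y_j\|\to0$, a contradiction.

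The two remaining easy implications: For b)$\To$c), put $D=B_{X^*}\cap\coo(\oplus_i E_i^*)$; since $\bE$ is bimonotone, $P^{\bE^*}_{[1,n]}x^*\in D$ and $\langle P^{\bE^*}_{[1,n]}x^*,x\rangle\to\langle x^*,x\rangle$ for $x\in X$ and $x^*\in B_{X^*}$, so $D$ is $1$-norming for $X$; take any $c\in(0,1)$, $d=1$ and $\cA$ as in b), and note every element of $D$ has an (optimal, hence) $\cA$-admissible $c$-decomposition. For d)$\To$a), the basis $(t_i)$ of $T_{c,\alpha}$ is shrinking, hence weakly null, hence so is any subsequence; if $(z_i)$ is a normalized block sequence of $\bE$ then by d) it is $C$-dominated by a subsequence of $(t_{m_i})$, so $(z_i)$ is the image of a weakly null sequence under a bounded operator, hence weakly null; an FDD all of whose normalized block sequences are weakly null is shrinking (the remark following the definition of the Tsirelson spaces).

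The substantive implication is c)$\To$d); here I would reproduce, in the present setting, the translation of the $c$-decomposition condition into Tsirelson-type upper estimates from \cite{FOSZ} and \cite{OSZ1}. Since $\cA$ is spreading, the optimal $c$-decomposition of a functional admitting an $\cA$-admissible $c$-decomposition is again $\cA$-admissible (the optimal one has the fewest, longest pieces, so its min-support set is a spread of a subset of that of any other $c$-decomposition), and one may also enlarge $D$ to be closed under interval restrictions $x^*\mapsto P^{\bE^*}_I x^*$ without losing that it is $d$-norming or that its elements admit $\cA$-admissible $c$-decompositions. One then invokes Proposition~\ref{P:3.3} (the implication $\b'')\To\a'')$, legitimate because $\bE$ is bimonotone): it suffices to bound, uniformly over $z^*\in D$ and $n_1<\cdots<n_{k+1}$, the quantity $\big\|\sum_i\|z^*\circ P^{\bE}_{[n_i,n_{i+1})}\|\,t^*_{n_i}\big\|_{V^*}$, first with $V=\Tac$. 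This is done by an induction on the Cantor--Bendixson index of $\cA$ that mirrors the implicit equation defining $\|\cdot\|_{\cA,c}$: the single-coordinate pieces of the (optimal, $\cA$-admissible) $c$-decomposition of $z^*$ contribute to the $\|\cdot\|_\infty$-part, while the norm-$\le c$ pieces, regrouped according to the recursive structure of $\cA$, contribute to the $c\sum\|A_s(\cdot)\|$-part and feed the inductive hypothesis on the interval restrictions $P^{\bE^*}_{R}z^*$. Finally every regular family $\cA$ is contained in some $S_\alpha$ on a tail (equivalently, up to the spread of the basis encoded by the subsequence $(t_{m_i})$ in d)), and a larger regular family induces a larger Tsirelson norm; hence subsequential $\Tac$-upper estimates give subsequential $T_{c,\alpha}$-upper estimates, after rescaling $c$ if necessary by means of the identification of $T_{c,\alpha}$ with $T_{c^n,\alpha\cdot n}$.

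The main obstacle is precisely this inductive step in c)$\To$d): keeping track, through the induction on the index of $\cA$, of how interval restrictions of a norming functional inherit $\cA$-admissible (optimal) $c$-decompositions, how the relevant minima of supports shift, and how the single-coordinate contributions are absorbed into the $\ell_\infty$-term of the Tsirelson norm. The other three implications are short, as indicated above.
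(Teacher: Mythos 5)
Your three "easy" implications are correct and essentially as in the paper. For a)$\To$b) you reach the same contradiction with bounded completeness of $\bE^*$ that the paper does, just pairing the pieces differently (even–odd pairs $y_j=x_{2j-1}^*+x_{2j}^*$ instead of successive pairs); this is a cosmetic variant and works. b)$\To$c) is trivial either way, and d)$\To$a) is the paper's argument verbatim.

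The substantive implication c)$\To$d) is where the gap is. You propose to route through Proposition~\ref{P:3.3}~$(\b'')\To\a'')$ and verify $\b'')$ directly, i.e.\ bound $\big\|\sum_i\|z^*\circ P^{\bE}_{[n_i,n_{i+1})}\|\,t^*_{n_i}\big\|_{T^*_{\cA,c}}$ uniformly over $z^*\in D$, by an ``induction on the Cantor--Bendixson index of $\cA$.'' This is not what the paper does, and you have not shown how it could close. The paper never passes to $T^*$; it proves the upper estimate
$\|\sum a_i x_i\|\le c^{-1}\|\sum a_i t_{\min\supp(x_i)}\|$
on the \emph{primal} side by induction on $|\supp_{\bE}(\sum x_i)|$, and the engine that makes the induction close is the auxiliary regular family
$\cB_{\cA}=\{\{n\}\cup B_1\cup B_2:n\in\N,\,B_1,B_2\in\cA\}\cup\{\emptyset\}$
together with the Tsirelson space $T_{c/d,\,\cB_{\cA}}$: picking a norming $x^*\in D$, restricting it to the range of $\sum a_i x_i$, and using its $\cA$-admissible $c$-decomposition to produce a set $B=B_1\cup B_2\in\cB_{\cA}$ along which to regroup the $x_i$. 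Nothing in your sketch plays the role of $\cB_{\cA}$, and without it the recursion in the Tsirelson norm does not match up with the structure of the $c$-decomposition. Worse, on the dual side the step you describe -- ``the norm-$\le c$ pieces feed the inductive hypothesis on the interval restrictions $P^{\bE^*}_R z^*$'' -- does not straightforwardly iterate: an interval restriction $P_I^{\bE^*}z^*$ has norm $\le c$, and to apply the hypothesis you would have to renormalize it, but after rescaling by $1/\|P_I^{\bE^*}z^*\|$ the pieces of its $c$-decomposition may no longer have norm $\le c$, so the $\cA$-admissible $c$-decomposition property is not inherited by normalized restrictions. (Your closure-under-restriction claim for $D$ is fine as stated; it is the renormalization that breaks.) Finally, the transition ``every regular family is contained in some $S_\alpha$ on a tail'' is not quite correct as stated; the paper instead invokes Proposition~3.10 of \cite{OTW} to find a subsequence $(m_i)$ with $\{(m_i)_{i\in F}:F\in\cB_\cA\}\subset S_\alpha$, which produces exactly the subsequence $(t_{m_i})$ appearing in statement~d). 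In short: the outline for c)$\To$d) names the target but omits the key device and leaves the genuinely hard step, which you yourself flag as ``the main obstacle,'' unresolved.
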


\begin{proof}
$a)\Rightarrow b)$.  Assume $b)$ fails for some $0<c<1$.  Then the
set $$\{(\min\supp_{E^*}(x_i^*))_{i=1}^n:\,(x_i^*)_{i=1}^n\textrm{
is the optimal $c$-decomposition of some }x^*\in
B_{X^*}\cap\coo(\oplus_{i=1}^\infty E_i^*) \}$$ is not
relatively compact in $[\N]^{<\omega}$.  This yields a sequence
$(n_i)_{i=1}^\infty\in[\N]^\omega$ so that for all $N\in\N$, there
exists $x^*(N)\in B_{X^*} \cap c_{00}(\oplus_{i=1}^\infty
E_i^*)$, with an optimal $c$-decomposition $(x^*_i(N))_{i=1}^
{\ell(N)}$ so that $\min\supp_{E^*}(x_i^*(N))=n_i$ for all $i\leq
N$. After passing to a subsequence, we may assume that
$\lim_{N\rightarrow\infty} x_i^*(N)=x_i^*$ for some $x^*_i\in
B_{X^*}\cap\coo(\oplus_{i=1}^\infty E_i^*) $ with
$\supp(x_i^*)\subset [n_i ,n_{i+1})$ for all $i\in\N$.  We have that
$\|x^*_{i}(N)+x^*_{i+1}(N)\|\geq c$ for all $N\in\N$ and $1\leq i<
\ell(N)$, and hence $\|x^*_{i}+x^*_{i+1}\|\geq c$ for all $i\in\N$.
Furthermore, $\|\sum_{i=1}^N x_i^*(N)\|\leq\|\sum_{i=1}^{\ell(N)}
x_i^*(N)\|\leq1$ for all $N\in\N$, and hence
$\sup_{N\in\N}\|\sum_{i=1}^N x_i^*(N)\|\leq1$.  We conclude that
$(x_i^*)$ is not boundedly complete, and hence $(E_i)_{i=1}^\infty$
is not shrinking.

$b)\Rightarrow c)$ is trivial.

$c)\Rightarrow d)$.  Let $D$, $0<c<d\leq 1$, and $\cA$ be as in
$c)$. We define $$\cB=\{{n}\cup B_1\cup B_2:\,n\in\N,\, B_1,B_2\in
A\}\cup\{\emptyset\}.$$ It is easily checked that $\cB = \cB_{\cA}$
is regular. Let $(t_i)_{i=1}^\infty$ be the unit vector basis of
$T_{c/d,\,\cB}$. We will prove, by induction on $s\!\in\! \N$, that
if $(x_i)_{i=1}^k$ is a normalized block sequence of $\bE$ with
finite length and $|\supp_{\bE} (\sum_{i=1}^k x_i)| \!\le\! s$, then
for all $(a_i)_1^k\subseteq\R$,
\begin{equation}\label{eq:3.2}
\Big\| \sum_{i=1}^k a_i x_i\Big\| \!\le\! c^{-1} \Big\| \sum_{i=1}^k
a_i t_{\min\supp_{\bE}(x_i)}\Big\|_{T_{c/d,\, \mathcal B}}\ .
\end{equation}

This is trivial for $s=1$ and also clear for $k=1$, so we may assume
$k>1$. Assume it holds for all $s'\!\le\! s$. Let $(x_i)_{i=1}^k$ be
a normalized block sequence of $\bE$ with $|\supp_{\bE}
(\sum_{i=1}^k x_i)| = s+1$. Let $m_i = \min \supp_{\bE} (x_i)$ for
$i\!\le\! k$ and set $m_{k+1} = 1+\max \supp_{\bE} (x_k)$. Let
$(a_i)_{i=1}^k\subseteq \R$ and $c/d \!<\! \rho \!<\!1$ be
arbitrary. Since $D$ is $d$-norming for $X$, there exists
$x^*\!\in\! D$ with
$$\Big| x^* \Big( \sum_{i=1}^k a_i x_i\Big) \Big|
\ge \rho d\Big\|  \sum_{i=1}^k a_i x_i\Big\|\ .$$ Let $\tilde x^* =
P_{[m_1,m_{k+1})}^{\bE^*} (x^*)$ where $\bE^* =
(E_j^*)_{j=1}^\infty$  is the FDD for $X^{(*)}$. By the
bimonotonicity of $\bE$, $\|\tilde x^*\| \!\le\!1$ and also
$\|\tilde x^* (\sum_{i=1}^k a_i x_i)\| \ge \rho d\|\sum_{i=1}^k a_k
x_i\|$. Furthermore, since $x^*$ admits an $\cA$-admissible
$c$-decomposition, so does $\tilde x^*$. Let $(x_i^*)_{i=1}^\ell$ be
an $\cA$-admissible $c$-decomposition of $\tilde x^*$ and let $n_i =
\min\supp_{\bE^*} (x_i^*)$ for $i\!\le\! \ell$. Thus
$(n_i)_{i=1}^\ell \!\in\!\cA$.

If $\ell=1$, then $\tilde x^* \!\in\! E_j^*$    for some $j$ and so
\begin{align*}
\Big\|\sum_{i=1}^k a_i x_i\| &\le (\rho\, d)^{-1} \Big| \tilde x^*
\Big( \sum_{i=1}^k a_i x_i\Big)\Big|
 \le
(\rho\, d)^{-1}  |a_j| \\
&\le (\rho\, d)^{-1} \Big\| \sum_{i=1}^k a_i t_{m_i}\Big\| \le
c^{-1} \Big\| \sum_{i=1}^k a_i t_{m_i}\Big\|\ , \ \text{ so
\eqref{eq:3.2} holds.}
\end{align*}
\ If $\ell>1$, we proceed as follows. Define
\begin{align*}
B_1 & = \{ m_i : i\!\le\! k \text{ and  there exists $j\!\le\! \ell$
with }
m_i\!\le\! n_j \!<\! m_{i+1}\}\ ,\\
B_2 & = \{ m_{i+1} : i\!\le\! k\ \text{ and }\ m_i\!\in\! B_1\}\ ,
\end{align*}
and let $n = \min (B_1)$. Then $B\equiv B_1 \cup B_2 = \{n\} \cup
(B_1\setminus\{n\}) \cup B_2\!\in\! \cB_{\cA}$. Indeed $B_2
\!\in\!\cA$ since it is a spread of a subset of
$(n_j)_{j=1}^\ell\!\in\!\cA$, by the definition of $B_1$. Similarly
$B_1\setminus \{n\}\!\in\! \cA$.

Write $B = \{m_{b_j} :j\!\le\! \ell'\}$ where $b_1 \!<\! b_2
\!<\!\cdots \!<\! b_{\ell'}$. Set $m_{b_{\ell'+1}} = m_{k+1}$. Since
$k>1$, $|\supp_{\bE} (\sum_{i=b_j}^{b_{j+1}-1} x_i)|\!\le\! s$, for
$j\!\le\! \ell'$, and our induction hypothesis applies to such
blocks. Moreover, if $b_{j+1} \ne b_j +1$ for some $j\!\le\! \ell'$,
then there is at most one $x_t^*$ whose support is not disjoint from
$\oplus_{i=m_{b_j}}^{m_{b_{j+1}-1}} E_i^*$, since no $n_i$ can
satisfy $m_{b_j} \!<\! n_i \!<\! m_{b_{j+1}}$. In addition,
$|\supp_{\bE^*} (x_t^*)| >1$ in this case, and so $\|x_t^*\| \!\le\!
c$ which yields
$$\Big| \tilde x^* \Big( \sum_{i=b_j}^{b_{j+1}-1} a_i x_i\Big)\Big|
\!\le\! c\,\Big\| \sum_{i=b_j}^{b_{j+1}-1} a_i x_i\Big\|\ .$$

We obtain for $I = \{j\!\le\!\ell': b_{j+1} \ne b_j+1\}$ and $J =
\{1,\ldots,\ell'\}\setminus I$,
\begin{align*}
\rho\,d\Big\| \sum_{i=1}^k a_i x_i\Big\|
&\le\Big| \tilde x^*\Big( \sum_{i=1}^k a_i x_i\Big) \Big|\\
&\le \Big| \sum_{j \in I} \tilde x^* \Big( \sum_{i=b_j}^{b_{j+1}-1}
a_ix_i\Big)\Big|
+ \Big| \sum_{j \in  J} \tilde x^* (a_{b_j} x_{b_j})\Big|\\
&\le \sum_{j \in I} c\, \Big\|\sum_{i=b_j}^{b_{j+1}-1} a_i x_i\Big\|
+ \sum_{j \in J} |a_{b_j}|\\
&\le \sum_{j \in  I} \Big\|\sum_{i=b_j}^{b_{j+1}-1} a_i
t_{m_i}\Big\| + \sum_{j \in  J} \|a_{b_j} t_{m_{b_j}}\|\ ,
\ \text{ by the induction hypothesis,}\\
& = \frac{d}c \sum_{j=1}^{\ell'} \frac{c}d
\Big\|\sum_{i=b_j}^{b_{j+1}-1} a_i t_{m_i}\Big\| \le \frac{d}c
\Big\| \sum_{i=1}^k a_i t_{m_i}\Big\|\ ,
\end{align*}
by definition of the norm for $T_{c/d,\, \cB_{\cA}}$. So
$$\rho\,c\Big\|\sum_{i=1}^k a_i x_i\Big\|
\le \Big\|\sum_{i=1}^k a_i t_{m_i}\Big\|\ .$$ Since $\rho \!<\!1$
was arbitrary  this proves \eqref{eq:3.2}.  Now the set $\cB$ is
regular, so its Cantor-Bendixson index $CB(\cB)$ is less than
$\omega_1$. By Proposition 3.10 in \cite{OTW}, if $\alpha<\omega_1$
is such that $CB(\cB)\leq\omega^\alpha$ then there exists
$(m_i)_{i=1}^\infty\in[\N]^\omega$ such that $\{(m_i)_{i\in
F}:F\in\cB\}\subset S_\alpha$.  It follows, from (\ref{eq:3.2}) that
$(E_i)$ satisfies subsequential $c^{-1}-(t_{m_i})_{i=1}^\infty$
upper estimates, where $(t_i)_{i=1}^\infty$ is the unit vector basis
of $T_{c/d,\alpha}$.

$d)\Rightarrow a)$ is immediate since $(t_{m_i})$ is weakly null.

\end{proof}

\begin{rems}\label{R:bimonotone}
In Theorem~\ref{P:bimonotone}, if the FDD $(E_i)$ for $X$ is not
bimonotone, then the Proposition holds with slight modification. Let
$K$ be the projection constant of $(E_i)$.  The hypothesis
``$0<c<d$'' in $c)$ should be changed to ``$0<c<d/K$''. This is seen
by renorming $X$, in the standard way, so that $(E_i)$ is
bimonotone:
$$|||x|||=\sup_{m\leq n}\|P^\bE_{[m,n]}\|.
$$
Then $D$ becomes $d/K$-norming for $(X,|||\cdot|||)$.  Furthermore,
(\ref{eq:3.2}) becomes valid for $(X,||\cdot||)$ with $c^{-1}$
replaced by $Kc^{-1}$.
\end{rems}

  It is worth noting that Proposition \ref{P:bimonotone} yields, as
  a corollary, the following less exact version of Theorem
  \ref{thm:FOSZ}.  A similar version of Theorem \ref{thm:OSZ2} would
  also follow.

\begin{cor}
Let $X$ be a Banach space with $X^*$ separable. Then there exists
$\alpha<\omega_1$ and $0<c<1$ so that $X$ embeds into a space $Y$,
with an FDD $(F_i)$ satisfying subsequential $T_{c,\alpha}$-upper
estimates.
\end{cor}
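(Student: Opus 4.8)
The plan is to derive the corollary from Theorem~\ref{P:bimonotone} together with Zippin's embedding theorem, so that essentially all of the substantive work is already in hand.

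First I would invoke \cite{Z2}: since $X^*$ is separable, $X$ embeds isomorphically into a Banach space $Y$ with a shrinking basis, which I regard as a shrinking FDD $\bF=(F_i)_{i=1}^\infty$ for $Y$. Renorming $Y$ by $\trivert y\trivert=\sup_{m\le n}\|P_{[m,n]}^{\bF}(y)\|$ I may assume $\bF$ is bimonotone; this does not disturb the embedding of $X$ and leaves $\bF$ shrinking. Hence it suffices to exhibit, for $Y$ itself, an FDD with subsequential $T_{c,\alpha}$-upper estimates for suitable $\alpha<\omega_1$ and $0<c<1$. Applying the implication $a)\Rightarrow d)$ of Theorem~\ref{P:bimonotone} to the bimonotone shrinking FDD $\bF$, I obtain $\alpha<\omega_1$, $0<c<1$, $1\le C<\infty$ and a subsequence $(t_{m_i})_{i=1}^\infty$ of the unit vector basis $(t_i)_{i=1}^\infty$ of $T_{c,\alpha}$ such that $\bF$ satisfies subsequential $C$-$(t_{m_i})_{i=1}^\infty$ upper estimates.

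The last step is to pass from this ``subsequence'' formulation to an honest subsequential $T_{c,\alpha}$-upper estimate, which I would do by relabeling the coordinates of $Y$. Set $\bF'=(F'_j)_{j=1}^\infty$ with $F'_{m_i}=F_i$ for $i\in\N$ and $F'_j=\{0\}$ otherwise (trivial summands being permitted, as in Section~\ref{S:5}); this is again an FDD for $Y$. A normalized block sequence $(z_\ell)$ of $\bF'$ is the same as a normalized block sequence of $\bF$, and if $n_\ell=\min\supp_{\bF}(z_\ell)$ then $\min\supp_{\bF'}(z_\ell)=m_{n_\ell}$. Therefore the inequality $\|\sum_\ell a_\ell z_\ell\|\le C\,\|\sum_\ell a_\ell t_{m_{n_\ell}}\|_{T_{c,\alpha}}$ supplied by the subsequential $C$-$(t_{m_i})$ upper estimate of $\bF$ is exactly the statement that $\bF'$ satisfies subsequential $C$-$T_{c,\alpha}$-upper estimates. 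Since $X$ embeds into $Y$, the corollary follows.

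I do not expect a genuine obstacle, since Theorem~\ref{P:bimonotone} carries the content; the only point requiring care is this final relabeling. It is worth noting that a naive blocking of $\bF$ would not work, because then $\min\supp$ would no longer coincide with $m_{n_\ell}$, and spreading the support of a vector in $T_{c,\alpha}$ only increases its norm; inserting trivial summands is precisely what is needed to line up the indices.
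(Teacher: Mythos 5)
Your proof is correct and uses essentially the same two main ingredients the paper uses (Zippin's theorem plus the implication $a)\Rightarrow d)$ of Theorem~\ref{P:bimonotone}), but your final re-indexing step is genuinely different from the paper's. The paper expands the FDD by inserting the actual Tsirelson basis vectors $(t_j)_{j\in(m_{i-1},m_i)}$ between consecutive summands, thereby producing a non-degenerate FDD (no trivial summands) that lives in a subspace of $Z\oplus T_{c,\alpha}$; the target space gets enlarged. You instead keep the space $Y$ fixed and pad the FDD with trivial summands $F'_j=\{0\}$ so that the indices line up with $(m_i)$, invoking the paper's explicit allowance of zero summands. Your route is slightly more economical because it does not change the ambient space, and it avoids the small (implicit) verification in the paper's sketch that a normalized block sequence mixing $E_i$-pieces and $t_j$-pieces still satisfies the upper estimate; the cost is that you are leaning on the nonstandard convention of permitting $\{0\}$ summands. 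Both are sound, and your closing remark correctly identifies why a naive blocking of $\bF$ (which would shift $\min\supp$ away from $m_{n_\ell}$) is the wrong move while padding is the right one.
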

\begin{proof}
By Zippin's theorem \cite{Z2}, we may embed $X$ into a space $Z$
with a shrinking FDD $(E_i)$.  By Theorem \ref{P:bimonotone}
$d)$, we obtain the result, except that the estimates are with
respect to $(t_{m_i})$.  We expand the FDD by inserting the basis
vectors $(t_j)_{j\in(m_{i-1},m_i)}$ between $E_{i-1}$ and $E_{i}$ to
obtain the desired FDD in a subspace of $Z\oplus T_{c,\alpha}$.
\end{proof}

Using Proposition  \ref{P:1.5}  we can derive from Theorem \ref{P:bimonotone}  the following sufficient and necessary condition for the dual
 of a Bourgain-Delbaen space  to be isomorphic to $\ell_1$.

 \begin{cor}\label{C:3.7} Let $Y$ be the Bourgain-Delbaen space associated to a Bourgain-Delbaen sequence $(\Delta_n)$
  satisfying  condition \eqref{E:1.1a}  for some $\theta<1/2$ (and thus the conclusion of  Proposition \ref{P:1.4} with $M\le \max(1/(1-2\theta), 2)$ )
 and  let $\bF=(F_j)$ be the FDD of $Y$ as introduced in Section \ref{S:2} and $\bF^*=(F_j^*)$.
 Define
 $$\cC=\Big\{\cuts(\gamma):\gamma\in\bigcup_{n=1}^\infty \Delta_n\Big\}.$$
 Then $\bF$ is shrinking (and thus $Y^*$ is isomorphic to $\ell_1$) if
 $\cC$ is compact, or equivalently, if $\cC$ does not contain an infinite strictly increasing chain.
\end{cor}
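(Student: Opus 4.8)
The plan is to reduce the statement to showing that the FDD $\bF=(F_j)$ of $Y$ is shrinking, which I would obtain by verifying condition~c) of Theorem~\ref{P:bimonotone} (in the form adapted to non-bimonotone FDDs, Remark~\ref{R:bimonotone}). Once $\bF$ is shrinking we have $Y^*=[\bF^*]$, and since by Proposition~\ref{P:1.4b} the norms $\|\cdot\|_*$ and $\|\cdot\|_{\ell_1}$ are $M$-equivalent on $\coo(\oplus_j F^*_j)$, this identifies $Y^*$ up to isomorphism with $\ell_1(\Gamma)$, hence with $\ell_1$ (the index set $\Gamma$ being countably infinite). So the whole task is to produce, for some appropriate $0<c<1$ (depending on the projection constant $K$ of $\bF$) and some regular family, a norming subset of $B_{Y^*}\cap\coo(\oplus_j F^*_j)$ each of whose elements admits an admissible $c$-decomposition. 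It is convenient first to pass to the bimonotone renorming $\trivert\cdot\trivert$ of $Y$, so that $\bF^*$ is bimonotone for $\trivert\cdot\trivert_*$ and each $P^*_{(k,m]}$ has norm $\le1$.

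For the norming set I would take $D=\{e^*_\gamma:\gamma\in\Gamma\}$. Each $e^*_\gamma$ satisfies $\|e^*_\gamma\|_*\le\|e^*_\gamma\|_{\ell_1}=1$ and, by \eqref{E:1.4.1}, lies in $\ell_1(\Gamma_{\rk\gamma})=\oplus_{j\le\rk\gamma}F^*_j\subseteq\coo(\oplus_j F^*_j)$; and $D$ is norming for $Y$ because $Y$ sits isometrically in $\ell_\infty(\Gamma)$ and $e^*_\gamma(y)=y(\gamma)$, so that $\sup_\gamma|e^*_\gamma(y)|=\|y\|$ (after renorming its norming constant is $1/K$, with $K$ bounded in terms of $\theta$ by Proposition~\ref{P:1.4}). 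Since $\cC$ contains no infinite strictly increasing chain it is relatively compact, hence contained in some regular family $\cB$.

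Next, fixing $\gamma$ I would write out its analysis from Proposition~\ref{P:1.5}: $e^*_\gamma=\sum_{j=1}^a\bigl(\alpha_j d^*_{\xi_j}+\beta_j P^*_{(p_{j-1},p_j)}(b^*_j)\bigr)$ with $\cuts(\gamma)=\{p_1,\dots,p_a\}\in\cC\subseteq\cB$. The terms $\alpha_j d^*_{\xi_j}$ lie in $F^*_{p_j}$ and have one-element $\bF^*$-support. For a term $\beta_j P^*_{(p_{j-1},p_j)}(b^*_j)$, whose $\bF^*$-support lies in the gap $(p_{j-1},p_j)$, I would invoke condition \eqref{E:1.1a} for the relevant type-$1$ functional (the $j=1$ term, coming from $\Delta_1$ or a type-$0$ functional, being treated similarly using the structure of $\Delta^{(0)}$): either $b^*_j=e^*_\eta$ with $c^*_\eta=0$, so $P^*_{(p_{j-1},p_j)}(b^*_j)=d^*_\eta$ again has one-element support, or $\beta_j\le\theta$, so $\trivert\beta_j P^*_{(p_{j-1},p_j)}(b^*_j)\trivert_*\le\theta$, in which case the term may be taken as a single small-norm piece (when $\theta<d/K$) or otherwise replaced by its optimal $c$-decomposition. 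Reading the pieces off in increasing order of $\bF^*$-support gives a block $c$-decomposition of $e^*_\gamma$ whose sequence of minimal supports is $\{p_1,\dots,p_a\}$ together with a bounded number of extra points inside each gap $(p_{j-1},p_j)$; this lies in a regular family $\cA$ obtained from $\cB$ by allowing that many additional points per gap. Feeding $D$, $\cA$ and $c$ into Theorem~\ref{P:bimonotone}(c) gives that $\bF$ is shrinking, and hence $Y^*\cong\ell_1$ as explained.

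I expect the quantitative heart of the argument to be the main obstacle: showing that the small-weight terms $\beta_j P^*_{(p_{j-1},p_j)}(b^*_j)$ contribute only a \emph{uniformly} bounded (in $\gamma$) number of points in each gap, so that the admissibility family $\cA$ is genuinely one fixed regular family built from $\cC$. This is exactly where condition \eqref{E:1.1a} is used (it forces every such term to be either a single $d^*$-vector or of weight $\le\theta$), together with the uniform equivalence $\|\cdot\|_*\sim\|\cdot\|_{\ell_1}$ on $\coo(\oplus_j F^*_j)$ and the elementary fact, recorded after Definition~\ref{D:c-decomposition}, that consecutive pairs in an optimal $c$-decomposition have norm exceeding $c$. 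Once this bound is secured, checking that the associated $\cA$ is regular and that the supports of the constructed $c$-decomposition land in $\cA$ is routine bookkeeping, as is the deduction of ``$Y^*\cong\ell_1$'' from ``$\bF$ shrinking''; and when $\theta$ is small the entire splitting step disappears and the proof is very short.
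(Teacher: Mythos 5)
Your proposal is correct and follows the same route as the paper's own one‑paragraph proof: apply Theorem~\ref{P:bimonotone}(c) to the norming set $D=\{e^*_\gamma:\gamma\in\Gamma\}$ and read a $c$-decomposition of $e^*_\gamma$ directly off its analysis in Proposition~\ref{P:1.5}, using condition~\eqref{E:1.1a} so that each block is either a singleton in $\bF^*$-support or has norm at most $\theta$, with the admissibility family supplied by the compactness of $\cC$. The additional care you take with the regime where $\theta$ is close to $1/2$ (so that $\theta$ may exceed the threshold $d/K$ from Remark~\ref{R:bimonotone} and the small-weight terms must be split further, with a uniform bound on the number of resulting sub-blocks per gap) is not spelled out in the paper, whose argument implicitly treats the small-$\theta$ case; as you note yourself, that is the only case that actually occurs in the constructions.
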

\begin{proof}
  Indeed, assuming \eqref{E:1.1a},  in  the analysis of $\gamma\in\Gamma$
  $$e^*_\gamma=
 \sum_{j=1}^{a}  \alpha_j d^*_{\xi_j} +  \beta_j P^*_{(p_{j-1},p_j)}(b^*_j) .
$$
all the $\beta_j$'s are at most $\theta$, except the ones for which
the support of $P^{\bF^*}_{(p_{j-1},p_j)}(b^*_j)$ (with respect to
$\bF^*$) is at most a singleton. Therefore the analysis  of $\gamma$
represents a $c$-decomposition of $e^*_\gamma$  and, thus,
Theorem  \ref{P:bimonotone} yields that $\bF$ is shrinking.
 \end{proof}

\section{The proof of Theorem A}\label{S:4}

Let $X$ be a separable Banach space. We will follow the  generalized BD construction in Section
\ref{S:2} to embed $X$ into a $\cL_\infty$ space $Y$. Since $X$ can be embedded into a space with basis
 (for example $C[0,1]$), we can assume that $X$ has an FDD,  which we denote by $\bE=(E_i)$, and after a renorming, if
 necessary, we can assume that $\bE$ is bimonotone.
  If $X^*$ is separable then we can assume
  that $\bE$ is shrinking by \cite{Z2}.

 The  Bourgain-Delbaen space $Y$, which we construct to contain $X$, will have $Y^*$ isomorphic
 to $\ell_1$, in the case that $X^*$ is separable.

 To begin we fix $0<c\leq 1/16$ and choose $0<\vp<c$, and  $(\vp_i)_{i=1}^\infty \subset (0,\vp)$ with $\vp_i\downarrow 0$ so that
\begin{align}\label{eq:4.1}
&\sum_{i=1}^\infty \vp_i \!<\! \frac{\vp}8\quad \text{ and} \sum_{i>n} \vp_i \!<\! \frac{\vp_n}2\ \text{ for all }\ n\!\in\! \N\ .
\end{align}

Next, for $i\!\in\! \N$, we choose $ R_i\subset (0,1]$ and $ \tilde
A_i^*\subseteq S_{E_i^*}$ to be $\vp_i/8$, dense in their respective
supersets,  with $1\in R_i$ for all $i\in\N$.
 We then choose an appropriate countable  subset,  $D\subset B_{X^*}\cap \coo(\oplus E^*_i)$, which norms $X$.

 \begin{lem}\label{L:4.1}
 There exists  a  set $D\subset \big(B_{X^*}\setminus \frac12 B_{X^*}\big)\cap \coo(\oplus E^*_i)$  with the following properties.
 \begin{enumerate}
 \item[a)]   $A^*_m:=D\cap E^*_m=\frac1{1+\vp/4} \tilde A^*_m$, for $m\in \N$.
 \item[b)]
   $D\cap\big(\oplus_{j=m}^n E^*_j\big)$  is finite,
   and $(1-\vp)$-norms the elements of $\oplus_{j=m}^nE_j$, for all $m<n$ in $\N$.
 \item[c)]   Every $x^*\in D$   can be written as
 $x^*=\sum_{i=1}^\ell r_i x^*_i$, where $(r_1x^*_1,\ldots,r_\ell x^*_\ell)$, is a  $c$-decomposition of $x^*$  and $x^*_i\in D$,  and $r_i\in R_{\max\supp(x^*_i)}$, for $i=1,\ldots \ell$.
 Moreover
 $$(\supp(x^*_i))_{i=1}^\ell\in\left\{  (\supp(z^*_i))_{i=1}^{\ell} : \begin{matrix}&(z^*_i)_{i=1}^\ell \text{ is the optimal $\frac{c}{1+\vp/4}$-decomposition}\\
                                                                                          &\text{ of some $z^*\in B_{X^*}\cap \coo\big(\oplus_{j=1}^\infty E^*_j\big)$}\end{matrix}\right\}.$$
                                                                                         \end{enumerate}
If $(E_i)$ is 1-uncondtional in $X$  then (a) and (b) can be
replaced by
\begin{enumerate}
\item[a')] $A^*_m:=D\cap E_m=\tilde A^*_m$, for $m\in \N$.
\item[b')]
   $D\cap\big(\oplus_{j\in B} E^*_j\big)$  is finite,
   and $(1-\vp)$-norms the elements of $\oplus_{j\in B}E_j$, for all finite $B\subset\N$.
\end{enumerate}
 \end{lem}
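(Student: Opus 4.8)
The plan is to build $D$ by a simultaneous recursion on the ``level'' $m$ together with an enumeration of all finite candidate configurations, so that properties (a), (b), (c) are forced at each stage. First I would dispose of (a): simply \emph{define} $A^*_m := \frac{1}{1+\vp/4}\tilde A^*_m$ (resp. $\tilde A^*_m$ in the 1-unconditional case), and put $D \cap E^*_m := A^*_m$. Since $\tilde A^*_m \subseteq S_{E^*_m}$ is finite (being an $\vp_m/8$-net of the finite-dimensional sphere $S_{E^*_m}$, which we may take finite), each $A^*_m$ is finite and consists of norm-$\frac{1}{1+\vp/4} \in (\tfrac12,1)$ functionals, so these lie in $B_{X^*}\setminus \tfrac12 B_{X^*}$. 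For (b) I would note that the standard fact ``an $\eta$-net of $S_{E^*_m}$ is $(1-\eta)$-norming for $E_m$'' combined with bimonotonicity of $\bE$ gives that the product set $\bigcup_{j=m}^n A^*_j$ restricted appropriately, and more generally suitable averages of such functionals, will $(1-\vp)$-norm $\oplus_{j=m}^n E_j$; the bookkeeping here is routine, using $\sum \vp_i < \vp/8$ from \eqref{eq:4.1} and the $\vp/4$-rescaling to absorb the loss. (In the 1-unconditional case one uses $P^{\bE}_B$ is a contraction for every $B\subseteq\N$, giving (b$'$).)

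The heart of the matter is (c): $D$ must be \emph{closed} under taking the ``discretized optimal $c$-decomposition''. Here is the recursion. Enumerate $D$ by building, for each $n$, a finite set $D_n \subseteq B_{X^*}\cap\coo(\oplus_{j=1}^n E^*_j)$ with $D_n \subseteq D_{n+1}$ and $D = \bigcup_n D_n$. At stage $n+1$: for every $z^* \in B_{X^*}\cap\coo(\oplus_{j=1}^{n+1}E^*_j)$ whose optimal $\frac{c}{1+\vp/4}$-decomposition $(z^*_i)_{i=1}^\ell$ has pieces supported in $[1,n+1]$, and for every choice of already-constructed functionals $x^*_i \in D_n$ (with $\supp(x^*_i) = \supp(z^*_i)$, chosen to approximate $z^*_i$ closely --- possible because $D_n$ is dense enough in the relevant finite-dimensional pieces by (a) and the net hypotheses) and every choice of $r_i \in R_{\max\supp(x^*_i)}$, add the vector $x^* := \sum_{i=1}^\ell r_i x^*_i$ to $D_{n+1}$, \emph{provided} $\|x^*\| \le 1$ and $\|x^*\| > \tfrac12$ and $(r_i x^*_i)_{i=1}^\ell$ is genuinely a $c$-decomposition of $x^*$. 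The point is that the support pattern $(\supp(z^*_i))_{i=1}^\ell$ ranges over exactly the set described in the ``Moreover'' clause of (c), and $c/(1+\vp/4)$ is chosen precisely so that a $\frac{c}{1+\vp/4}$-decomposition of the model $z^*$, after perturbing each piece by at most $\sum\vp_i < \vp/8$ and rescaling by $r_i \le 1$, remains a bona fide $c$-decomposition of the resulting $x^*$. One checks $D_{n+1}$ is finite (finitely many support patterns in $[1,n+1]$, finitely many choices from the finite sets $D_n$ and $R_j$) and that every newly added $x^*$ again satisfies (c) with the required data $x^*_i\in D$, $r_i \in R_{\max\supp(x^*_i)}$ --- which is automatic since we built it that way. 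Finally, since $1 \in R_j$ for all $j$, every $x^* \in D_n$ with a single piece ($\ell=1$) just reproduces itself, so the original net functionals from (a) all satisfy (c) trivially ($\ell=1$, $r_1=1$), closing the recursion.

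I would then verify the global properties: (a) holds by construction; (b)/(b$'$) because $D_m \cap (\oplus_{j=m}^n E^*_j) \supseteq$ the net functionals, which norm as required, and $D\cap(\oplus_{j=m}^n E^*_j) = D_n\cap(\oplus_{j=m}^n E^*_j)$ is finite since only finitely many elements of $D$ are supported in $[1,n]$; and (c) holds because it was imposed at every stage, and the "Moreover" constraint on support patterns is exactly what the enumeration ranges over.

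\textbf{Main obstacle.} The genuinely delicate step is coordinating the three numerical tolerances: the $\vp/4$-rescaling in (a), the $\vp_i$-perturbations allowed when replacing the model pieces $z^*_i$ by discretized $x^*_i \in D$, and the requirement that $(r_i x^*_i)$ be an honest $c$-decomposition (not just a $\frac{c}{1+\vp/4}$-one). One must check that shrinking from threshold $c$ to $\frac{c}{1+\vp/4}$ in the model, then perturbing each piece by $<\vp_i$ and scaling down by $r_i\le 1$, never pushes a piece of norm $\le \frac{c}{1+\vp/4}$ above $c$ (easy, since $\frac{c}{1+\vp/4} + \vp/8 < c$ for $c \le 1/16$, $\vp < c$) while keeping $\|x^*\| \le 1$ and $> \tfrac12$ (the latter from the model $z^*$ having norm close to $1$ after the $D$-is-norming step, minus the controlled error). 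Getting these inequalities to close simultaneously, uniformly in $n$, is the real content; everything else is routine net-and-closure bookkeeping.
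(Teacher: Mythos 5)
Your plan is in the same spirit as the paper's proof --- both proceed by a recursion that discretizes optimal decompositions and re-uses earlier constructed functionals as pieces --- but it is missing the key structural device that makes the paper's induction close, and two of your steps as stated would in fact break.

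The paper does not build $D$ directly from the nets $A_j^*$. It first forms an intermediate set
$H = \frac{1}{1+\vp/4}\{ (\sum_{i=m}^n a_i x_i^*)/\|\sum_{i=m}^n a_ix_i^*\| : a_i\in R_i,\ x_i^*\in\tilde A_i^*\}$
of \emph{all} normalized finite combinations, observes that $H$ already $(1-\vp)$-norms each $\oplus_{j=m}^n E_j$, and, crucially, that $H$ is stable under passing to normalized restrictions (property \eqref{eq:4.6}). That stability is exactly what guarantees that the pieces $h_i$ of the optimal $\frac{c}{1+\vp/4}$-decomposition of $h\in H$ are again (up to the $1+\vp/4$ factor) in $H$, so that the recursive assignment $h\mapsto\tilde h=\sum r_i\tilde h_i$ is well-defined by induction on $|\textrm{ran}(h)|$, with the quantitative bound $\|\tilde h-h\|\le\sum_{j\in\supp(h)}\vp_j$ transferring the norming of $H$ to $D$. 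Your recursion tries to achieve the same closure "from below," starting from the nets $A_j^*$, but the inductive step relies on $D_n$ being ``dense enough in the relevant finite-dimensional pieces,'' and you offer no mechanism enforcing that density; in particular the claim that the $A_j^*$ ``combined with bimonotonicity'' (plus unspecified ``suitable averages'') give $(1-\vp)$-norming of $\oplus_{j=m}^n E_j$ is not correct as stated --- $\bigcup_j A_j^*$ does not norm sums, and it is precisely the construction of the right averages (the set $H$) that is the nontrivial content you are waving away.

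A second, concrete failure: you let $z^*$ range over all of $B_{X^*}\cap\coo(\oplus E_j^*)$, and then discretize its optimal decomposition as $\sum r_i x_i^*$ with $x_i^*\in D$ and $r_i\in R_{\max\supp(x_i^*)}\subset(0,1]$. But if the optimal decomposition has a singleton piece $z_i^*\in E_j^*$ with $\|z_i^*\|$ close to $1$, the closest you can get is $r_i x_i^*$ with $x_i^*\in A_j^*=\frac1{1+\vp/4}\tilde A_j^*$ (so $\|x_i^*\|=\frac1{1+\vp/4}$) and $r_i\le 1$, i.e.\ $\|r_i x_i^*\|\le\frac1{1+\vp/4}<1$; you cannot reach norm $1$. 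This is the whole point of the $\frac1{1+\vp/4}$-rescaling of $H$: the paper ranges the recursion over $H$, whose elements have norm exactly $\frac1{1+\vp/4}$, so every singleton piece has norm $\le\frac1{1+\vp/4}$ and the factor $s_i=\|z_i^*\|(1+\vp/4)\le 1$ is genuinely achievable by some $r_i\in R_j$. Your recursion over unscaled $B_{X^*}$ runs straight into this wall. Related to this, your ``provided $\|x^*\|\le 1$ and $>\frac12$'' clause silently discards some of the vectors you would need for norming; in the paper these bounds are \emph{proved} (from $\|h\|=\frac1{1+\vp/4}$ and $\|\tilde h - h\|<\vp/8$), not imposed by fiat. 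To repair your argument you would essentially be led to reintroduce $H$ and its restriction-stability, at which point you have the paper's proof.
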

 For  $D$ as in  Lemma \ref{L:4.1} and
 each $x^*\in D$ we pick  such a $c$-decomposition  $(r_1x^*,r_2x^*_2,\ldots r_\ell x^*)$ and call it the {\em special $c$-decomposition of $x^*$.}
 If $x^*\in A^*_j=D\cap E^*_j$, we let $(x^*)$   be its own special $c$-decomposition.

 \begin{proof} We abbreviate $\supp_{\bE^*}(\cdot)$ by $\supp(\cdot)$, and we abbreviate $\textrm{ran}_{\bE^*}(\cdot)$ by $\textrm{ran}(\cdot)$.
Define
$$H= \frac1{1+\vp/4}\left\{
\frac{\sum\limits_{i=m}^n a_i x_i^*}{\|\sum\limits_{i=m}^n a_i
x_i^*\|} :  m\!\leq\! n\ ,\ a_i \!\in\!  R_i\ \text{ and }\
x_i^*\!\in\! \tilde A_i^* \ \text{ for }\ i\!\in\! [m,n]\right\}\\
.$$ We note the following properties of $H$.
\begin{align}
&H\ \text{ is countable.}\label{eq:4.3} \\
&H\cap \oplus_{i=1}^n E_i^* \ \text{ is finite for all }\ n\!\in\! \N\ .\label{eq:4.4}\\
&H\cap \oplus_{i=m}^n E_i^*\ \text{ $(1-\vp)$-norms  $\oplus_{i=m}^n E_i$, for all $m\le n$ in $\N$.}\label{eq:4.5} \\
&\text{If $x^*\!\in\! H$ and }\supp(x^*)\cap [m,n]\ne\phi ,\
  m\!\le\!n,\text{ then } \frac{P_{[m,n]}^{\bE^*}(x^*)}{\|P_{[m,n]}^{\bE^*}(x^*)\|}
\!\in\!(1\!+\!\vp/4) H. \label{eq:4.6}\end{align}

Set $H_n=\{h\in H: |\ran(h)|=n\}$ and thus $H=\bigcup_{n=1}^\infty
H_n$. For each $n\in\N$ we will  inductively define for $h\in H_n$,
an element $\tilde h\in \big(B_{X^*}\setminus\frac12B_{X^*}\big)\cap
\coo(\oplus_{j=1}^\infty E^*_i)$.  We then set $D_n=\{\tilde{h}:h\in
H_n\}$ and $D=\cup_{n\in\N}D_n$.

If $h\in H_1$, let $\tilde{h}=h$.  Let $n>1$ and assume that $D_m$
has been defined for all $m<n$.  Let $h\in H_n$ and
$(z_1^*,\ldots,z_\ell^*)$ be the optimal $c/(1+\vp/4)$-decomposition
of $h$. Note that $\ell\geq2$ since $n>1$ and $\|h\|=1/(1+\vp/4)$.
We write the decomposition as $$(s_i
h_i)_{i=1}^\ell=\left(\|z_i^*\|(1+\vp/4)\frac{z_i^*}{(1+\vp/4)\|z_i^*\|}\right)_{i=1}^\ell.$$
By the definition of $H$, $\|z_i^*\|\leq1/(1+\vp/4)$ and so
$0<s_i=\|z_i^*\|(1+\vp/4)\leq1$ for $i\leq\ell$.  If $h_i\not\in H_1$,
then $\|s_i h_i\|=\|z_i^*\|\leq c/(1+\vp/4)$ and so $s_i\leq c$.

For $i\leq\ell$, choose $r_i\in R_{\max\supp(h_i)}$ with
$|r_i-s_i|\leq\vp_{\max\supp(h_i)}/4$ and $r_i\leq c$ if $h\not\in
H_1$.  We define $\tilde{h}=\sum_{i=1}^\ell r_i \tilde{h}_i$.  By
induction, we will verify the following.
\begin{equation}\label{E:4.6}\supp(\tilde{h})=\supp(h)
\end{equation}
\begin{equation}\label{E:4.7}\|\tilde{h}-h\|\leq\sum_{j\in\supp(\tilde{h})}\vp_j
\end{equation}
\begin{equation}\label{E:4.8}(r_1\tilde{h}_1,..,r_\ell\tilde{h}_\ell)\textrm{
is a $c-$decomp of }\tilde{h},\textrm{ with $r_i\kin
R_{\max\supp(\tilde{h}_i)}$ and
}\tilde{h}_i\kin\cup_{m<n}D_m\textrm{, if }n\!>\!1.
\end{equation}
The condition (\ref{E:4.6}) is clear.  To verify (\ref{E:4.7}) we
note that if $h_i\in H_1$, then
$$\|r_i\tilde{h}_i-s_i
h_i\|\le|r_i-s_i|<\vp_{\max\supp(\tilde{h}_i)}/4.
$$
If $h_i\not\in H_1$, by the induction hypothesis,
\begin{align*}\|r_i\tilde{h}_i-s_i h_i\|&\leq
\|r_i(\tilde{h}_i-h_i)\|+\|(r_i-s_i)h_i\|
\leq
c\sum_{j\in\supp(\tilde{h}_i)}\vp_j+\vp_{\max\supp({h}_i)}/4\leq\sum_{j\in\supp(\tilde{h}_i)}\vp_j.
\end{align*}
Thus $\|h-\tilde{h}\|\leq\sum_{i=1}^\ell\|r_i\tilde{h}_i-s_i
h_i\|<\sum_{j\in\supp(\tilde{h})}\vp_j$, which proves (\ref{E:4.7}). \eqref{E:4.8} holds by construction.
Equation (\ref{E:4.7}) now yields,
\begin{align*}
1/2&\leq1/(1+\vp/4)-\sum_{j\in\supp(\tilde{h})}\vp_j\leq\|h\|-\|h-\tilde{h}\|\\
&\leq\|\tilde{h}\|\leq\|h\|+\|h-\tilde{h}\|\leq1/(1+\vp/4)+\sum_{j\in\supp(\tilde{h})}\vp_j\leq1.
\end{align*}
Thus $D\subset
B_{X^*}\setminus\frac{1}{2}B_{X^*}$.  Properties $a),b),$ and $c)$
of $D$ follow from $(\ref{E:4.6}),(\ref{E:4.7})$, and
$(\ref{E:4.8})$.

If $(E_i)$ is 1-unconditional, as defined, we instead begin with
$$H= \left\{
\frac{\sum\limits_{i\in B} a_i x_i^*}{\|\sum\limits_{i\in B}^n a_i
x_i^*\|} :  \emptyset\neq B\subset\N,\,|B|<\infty  ,\, a_i \!\in\!
R_i\ \text{ and }\
x_i^*\!\in\! \tilde A_i^* \ \text{ for }\ i\!\in\! B\right\}\\
.$$ We then follow the above construction, similarly without the
$(1+\vp/4)$-factors.  These were necessary to ensure that the
$\tilde{h}_j$'s were in $B_{X^*}$.
\end{proof}

Next we define $\Gamma$ and a certain partial order on $\Gamma$ and use
that to define the $\Delta_n$'s.
$$\Gamma = \left\{ (r_1x_1^*,\ldots, r_jx_j^*) :\begin{matrix} \text{$j\ge 1$ and there exists $y^*\!\in\! D$
so that $(r_1x_1^*,\ldots,r_jx_j^*)$}\\
\text{are the first $j$ elements of the
 special $c$-decomposition of $\ds y^*$}\end{matrix}\right\}\,.$$

From Theorem \ref{P:bimonotone}  and Lemma   \ref{L:4.1} we deduce  for $\cG\!=\!\big\{\{\min\supp(x_j^*): j\!\le\!\ell\}: (r_1x^*_1,\ldots r_\ell x^*_\ell)\!\in\!\Gamma\big\}$
\begin{equation}\label{E:4.3a}  \text{ $(E_i)$ is shrinking in $X$ $\iff$  $\cG$ is compact.}
\end{equation}

We first define an order on the bounded intervals in $\N$ by
$[n_1,n_2] < [m_1,m_2]$ if $n_2 \!<\! m_2$ or $n_2 = m_2$ and $n_1> m_1$.
It is not hard to see that this is a well ordering.
It is instructive to list the first few elements in increasing order
(we let $[n,n]=n$):
$$(I_n)_{n=1}^\infty =
(1,\, 2,\, [1,2],\, 3,\, [2,3],\, [1,3],\, 4,\, [3,4],\, [2,4],\, [1,4],\, 5\,
\ldots)$$
If $\gamma = (x_1^*,\ldots, x_\ell^*)\!\in\! \Gamma$ we let
$$\ran_{\bE^*}\Big(\sum_{i=1}^\ell x_i^*\Big) \equiv \ran_{\bE^*} (\gamma)
\quad\text{ and }\quad
\supp_{\bE^*} \Big(\sum_{i=1}^\ell x_i^*\Big) \equiv \supp_{\bE^*} (\gamma)\ .$$

For $\gamma \!\in\!\Gamma$ we define {\em the rank of $\gamma$} by
$\rk(\gamma)=n$ if $\ran\supp_{\bE^*}(\gamma)=I_n$.
We then define a partial order ``$\le$'' on $\Gamma$ by $\gamma \!<\!\eta$
if $\rk(\gamma) \!<\! \rk_{\bE^*}(\eta)$.
 If $\rk(\gamma)=\rk(\xi)$ and $\gamma\not=\eta$ we say that $\gamma$ and $\eta$ are incomparable. We next define an important subsequence $(m_j)_{j=1}^\infty$ of $\N$.
For $j\!\in\!\N$ let $m_j = \rk(x^*)$ for $x^* \!\in\! A_j^*$.  Thus $m_1 =1$, $m_2=2$, $m_3=4$ and more generally $m_{j+1} = m_j + j$.
 Note that
\begin{equation}\label{eq:4.12}
\text{for $\gamma\!\in\!\Gamma$, $i_0 = \max\supp_{\bE^*}(\gamma)$ if and only if }
m_{i_0} \!\le\! \rk(\gamma) \!<\!m_{i_0+1}\ .
\end{equation}

The following proposition is easily verified.
\begin{prop}\label{P:partialorder}
``$\le$'' is a partial order on $\Gamma$.
Furthermore,
\begin{itemize}
\item[a)] Every  natural number  is the rank of some element of
$\Gamma$ and the set of all such elements is finite.
\item[b)] If $j\!\in\! \N$ and
 $(z^*) \!\in\!  \{\gamma:\rk(\gamma)=m_j\}= \{(rx^*)\in \Gamma: r\in R_j, x^*\in A^*_j\}$ , then
\begin{align*}
&\qquad\quad\,\,\,\,\{\gamma \!\in\!\Gamma : \gamma \!<\! z^*\}
= \{ \gamma \!\in\! \Gamma : \max\supp_{\bE^*} (\gamma) \!<\! j\}\qquad
\text{ and}\\
& \begin{matrix}\{\gamma\!\in\!\Gamma : \gamma \!>\!(z^*)\}
  = \{\gamma \!\in\! \Gamma :\max\supp_{\bE^*} (\gamma) \!\ge\!j\textrm{ and }\supp_{E^*}(\gamma)\neq\{j\}\}\ .\end{matrix}
\end{align*}
\end{itemize}
\end{prop}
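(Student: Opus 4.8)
The plan is to verify the three assertions by unwinding the definitions of $\rk$, of $\Gamma$, and of the well ordering $(I_n)$, together with Lemma~\ref{L:4.1} and the relation \eqref{eq:4.12}. That ``$\le$'' is a partial order is immediate from its definition through ranks: reflexivity is built in; if $\gamma\ne\eta$ then $\gamma\le\eta$ forces $\rk(\gamma)<\rk(\eta)$, so $\gamma\le\eta$ and $\eta\le\gamma$ cannot both hold, which gives antisymmetry; and $\rk(\gamma)<\rk(\eta)<\rk(\zeta)$ implies $\rk(\gamma)<\rk(\zeta)$, which gives transitivity.

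For a), I would fix $n$ and write $I_n=[p,q]$. If $p=q$, pick any $x^*\in A_p^*$ (nonempty, since $E_p\ne\{0\}$ and $\tilde A_p^*$ is $\vp_p/8$-dense in $S_{E_p^*}$); then $(x^*)$ is its own special $c$-decomposition, so $(x^*)\in\Gamma$ and $\ran_{\bE^*}\big((x^*)\big)=[p,p]$, i.e.\ $\rk\big((x^*)\big)=n$. If $p<q$, recall from the construction of $D$ in Lemma~\ref{L:4.1} that any $h\in H$ with $\ran(h)=[p,q]$ is a strictly positive combination of vectors $x_i^*\in\tilde A_i^*$, one for each $i\in[p,q]$, and hence has full support $[p,q]$; by \eqref{E:4.6} so does the associated $\tilde h\in D$. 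The special $c$-decomposition $(r_1\tilde h_1,\dots,r_\ell\tilde h_\ell)$ of $\tilde h$ is a block sequence with $\sum_i r_i\tilde h_i=\tilde h$, so the tuple $\gamma=(r_1\tilde h_1,\dots,r_\ell\tilde h_\ell)$ lies in $\Gamma$ and $\ran_{\bE^*}(\gamma)=[p,q]$, i.e.\ $\rk(\gamma)=n$. For finiteness, any $\gamma=(r_1x_1^*,\dots,r_\ell x_\ell^*)\in\Gamma$ of rank $n$ is an initial segment of a special $c$-decomposition, so by Lemma~\ref{L:4.1}(c) each $x_i^*\in D$, each $r_i\in R_{\max\supp(x_i^*)}$, and $(x_i^*)$ is a block sequence with $\ran_{\bE^*}\big(\sum_i r_ix_i^*\big)=[p,q]$; thus $\supp_{\bE^*}(x_i^*)\subseteq[p,q]$, so $x_i^*\in D\cap\big(\oplus_{l=p}^q E_l^*\big)$, a finite set by Lemma~\ref{L:4.1}(a),(b), $r_i$ ranges over the finite set $\bigcup_{l=p}^q R_l$, and $\ell\le q-p+1$; hence only finitely many $\gamma$ have rank $n$.

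For b), I would first note that an $x^*\in A_j^*\subseteq E_j^*\setminus\{0\}$ has $\ran_{\bE^*}(x^*)=[j,j]$, so $I_{m_j}=[j,j]$. Consequently, if $\gamma\in\Gamma$ has $\rk(\gamma)=m_j$, its total support is $\{j\}$; since $\gamma$ is a block sequence with nonzero entries, this forces length one, $\gamma=(r_1x_1^*)$ with $\supp_{\bE^*}(x_1^*)=\{j\}$, and Lemma~\ref{L:4.1}(c) gives $x_1^*\in D\cap E_j^*=A_j^*$ and $r_1\in R_j$; conversely, any $(rx^*)\in\Gamma$ with $r\in R_j$, $x^*\in A_j^*$ has total support $\{j\}$ and hence rank $m_j$ --- this is the displayed equality. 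Now fix such a $z^*$, so $\rk(z^*)=m_j$. By definition $\{\gamma\in\Gamma:\gamma<z^*\}=\{\gamma:\rk(\gamma)<m_j\}$, and by \eqref{eq:4.12} and the monotonicity of $(m_i)$ this equals $\{\gamma:\max\supp_{\bE^*}(\gamma)\le j-1\}$, which is the first identity. For the second, $\{\gamma\in\Gamma:\gamma>z^*\}=\{\gamma:\rk(\gamma)>m_j\}$; by \eqref{eq:4.12} this holds iff either $\max\supp_{\bE^*}(\gamma)>j$ (in which case automatically $\supp_{\bE^*}(\gamma)\ne\{j\}$), or $\max\supp_{\bE^*}(\gamma)=j$ together with $\rk(\gamma)>m_j$, the latter meaning $\ran_{\bE^*}(\gamma)$ is an interval with right endpoint $j$ different from $[j,j]$ (otherwise $\rk(\gamma)=m_j$), i.e.\ $\supp_{\bE^*}(\gamma)\ne\{j\}$. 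The reverse inclusion is the same computation run backwards, and this yields the proposition.

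The only step requiring genuine care is part a): the existence claim for a non-singleton $I_n$ is not visible from the bare definition of $\Gamma$ and must be traced back to the $h\mapsto\tilde h$ construction of Lemma~\ref{L:4.1}, which is precisely what supplies elements of $D$ with a prescribed interval as support; likewise the finiteness statement must be anchored to the finiteness clauses of Lemma~\ref{L:4.1}(a),(b) and to the (tacit) finiteness of the sets $R_i$. The partial-order claim and part b) are purely formal consequences of the definitions and \eqref{eq:4.12}.
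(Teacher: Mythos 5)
Your proof is correct and takes essentially the same route as the paper's, which compresses (a) into a one-line appeal to Lemma~\ref{L:4.1}(b) together with a counting bound and simply asserts that (b) ``follows easily''; you unpack the same ingredients (the block-sequence structure of elements of $\Gamma$, the finiteness clauses of Lemma~\ref{L:4.1}, and \eqref{eq:4.12}). Your observation that the existence half of (a) really comes from the $h\mapsto\tilde h$ construction inside the proof of Lemma~\ref{L:4.1} --- which manufactures an element of $D$ whose support is exactly a prescribed interval --- rather than from the statement of part (b) alone (a finite $(1-\vp)$-norming set need contain no functional of full support) is a fair refinement of the paper's terse citation.
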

\begin{proof}  Lemma \ref{L:4.1}  (b) implies
  that for any $n$
there must be some $\gamma\in\Gamma$ of rank $n$,  and
if we  let $s<t$, so that $I_n=(s,t]$,
  then
  \begin{align*}
\#\{\gamma \kin\Gamma: \,&\rk(\gamma)\!=\!n\}
 \le     \sum_{\ell=1}^{ t-s}\sum_{s=t_0<t_1<\ldots t_\ell=t}\,\prod_{j=1}^\ell \#R_{t_j} \cdot
\# D\cap (\oplus_{j=t_{j-1}}^{t_j} E^*_j),
\end{align*}
which yields (a).  (b) follows easily from the definition of our
partial order.
\end{proof}

For $n\kin\N$, set $\Delta_n=\{\gamma\in\Gamma:\rk(\gamma)=n\}$. We
will next define $c^*_\gamma$ for $\gamma\in\Gamma$ (thus also
defining $e^*_\gamma=c_\gamma^*+d^*_\gamma$). Following this we will
show how the $\Delta_n$'s can be recoded  to fit into
 the framework of Section \ref{S:2}. To begin,
 \begin{enumerate}
\item[i)] we let
  $c^*_\gamma=0$  if $\rk(\gamma)\in\{m_j:j\in\N\}$  (thus, in particular,  $c^*_\gamma=0$ if $\gamma\in \Delta_1$).
\end{enumerate}
 We proceed by induction and assume that $c^*_\gamma$ has been defined for all $\gamma\kin\Gamma_n=\bigcup_{j=1}^n \Delta_n$.
 Assume that $\gamma\in\Delta_{n+1}$ with $n+1\not\in\{m_j:j\in\N\}$. Let $\gamma=(r_1x^*_1,r_2x^*_2, \ldots,r_\ell x^*_\ell)$. There are several cases.
 \begin{enumerate}
 \item[ii)]   $\ell=1$, so $\gamma=(r_1x^*_1)$,  where $|\supp_{\bE^*}(x_1^*)|>1$. Let
 $(s_1y^*_1,s_2 y^*_2,\ldots,s_my^*_m)$ be the special $c$-decomposition of $x^*_1$ and note that $m\ge 2$, since $\|x^*_1\|\ge \sfrac12>c$.
  Put $\xi=(s_1y^*_1,s_2y^*_2,\ldots, s_{m-1}y^*_{m-1})$ and let $\eta$ be the special $c$-decomposition of $y^*_m$.
Define $c^*_\gamma=r_1e^*_\xi+r_1s_me^*_\eta$.
 \item[iii)] $\ell=2$ and $|\supp_{\bE^*}(x^*_1)|=1$. Let $\xi=(x_1^*)$ and let $\eta$ be the special $c$-decomposition of $x^*_2$ and set
 $c^*_\gamma=r_1e^*_\xi+r_2 e^*_\eta$.

 \item[iv)] $\ell>2$ or $\ell=2$ and $|\supp_{\bE^*}(x_1^*)|>1$. Let $\xi=(r_1x^*_1,r_2x^*_2, \ldots r_{\ell-1} x^*_{\ell-1})$ and let $\eta$ be the special $c$-decomposition of $x^*_\ell$.
  Define $c^*_\gamma=e^*_\xi+r_\ell e^*_\eta$.
 \end{enumerate}

 Note that in the  cases (ii), (iii) and (iv)
  $k:=\rk(\xi)<\rk(\eta)\le n$ and, furthermore, as can be shown inductively
\begin{equation}\label{E:4.9}
\min\supp_{\bF^*}  (e^*_\gamma)  \ge m_{\min\ran_{\bE^*}(\gamma)} \text{ for all $\gamma\in\Delta_n$}.
\end{equation}
For the recoding we proceed as follows. We will identify $\Delta_n$
with new sets $\tilde \Delta$  conforming to Definition \ref{D:1.1}.
Set $\tilde\Delta_1=\Delta_1=\{(rx^*):r\in R_1, x^*\in A^*_1\}$. For
$n\ge 2$ we
 will identify $\Delta_n$ with  $\tilde\Delta_n=\tilde\Delta_j^{(0)}\cup\tilde\Delta_j^{(1)}$.
Assume this has be done for $j\le n$. We let $\gamma\in\Delta_{n+1}$ and define $\tilde\gamma$ in the four cases  above.
\begin{enumerate}
\item[i)] If $\gamma=(rx^*)$ with $r\in R_j$ and $x^*\in A^*_j$ for some $j\in\N$, and thus $\rk(\gamma)=m_j$, we let
 $\tilde\gamma=(m_j,0,0,rx^*)$, i.e. we choose $\beta=0$, $b^*=0$ and $(rx^*)$ to be the free variable.
 \end{enumerate}
 In the next three cases let $\xi$, $\eta$ and $k=\rk(\xi)$, $\ell$,$ m$, $r_j$, $j\le\ell$, and $s_j$, $j\le m$, be as above in (ii), (iii) and (iv), and let $\tilde \xi$ and $\tilde \eta$ be the recodings of $\xi$ and $\eta$.
 \begin{enumerate}
 \item[ii)] If $\gamma=(r_1x^*_1)$, with $|\supp_{\bE^*}|>1$, we let $\tilde\gamma=(n+1,2r_1,\frac12(e^*_{\tilde\xi}+s_me^*_{\tilde\eta}))$.
 \item[iii)]   If $\gamma=(r_1x^*_1,r_2x^*_2)$, with $|\supp_{\bE^*}(x_1^*)|=1$,  let  $\tilde\gamma=(n+1,r_1,k,\tilde\xi, r_2,e^*_{\tilde\eta})$.
 \item[iv)]   If $\gamma=(r_1x^*_1,r_2x^*_2, \ldots, r_\ell x^*_\ell)$,  with $\ell>2$  or $|\supp_{\bE^*}(x_1^*)|>1$,
  let  $\tilde\gamma=(n+1,1,k,\tilde\xi, r_\ell,e^*_{\tilde\eta})$.
\end{enumerate}
In cases (i) and (ii), $\tilde\gamma$ is of type 0, while in the
other cases it is of type 1. In cases (ii),(iii) and (iv) the set of
free variables is a singleton and we have thus suppressed it.
Definition \ref{D:1.3} yields that the Bourgain-Delbaen space
corresponding to the $\tilde\Delta_n$'s
 is exactly the same as the one obtained from  the $\Delta_n$'s above. Indeed, in (ii), (iii) and (iv) the definition
  of $c^*_{\tilde\gamma}$ involves  the projections $P^{\bF^*}_{(k,n]}$. But
 $P^{\bF^*}_{(k,n]}(e^*_\eta)=e^*_\eta$ by Proposition \ref{P:partialorder} and \ref{E:4.9}.
Also, from our construction, we note that \eqref{E:1.1a}  is
satisfied for  the $\tilde\Delta_n$'s since the  factors $r$
involved are all at most $2c\le \sfrac18$, unless
 the relevant  $b^*=e^*_\eta$ and $c^*_\eta=0$, for some $\eta\kin\Gamma$. It follows as in  Remark  \ref{R:1.4a}, that $\bF^*=(F_j^*)$ is an FDD for $\ell_1$,
  whose decomposition constant $M$ does not exceed $2$.

Let $\gamma=(r_1x^*_1,\ldots, r_\ell x^*_\ell)\in\Gamma$, $\ell\ge
2$. Then by iterating case (iv) we can compute the analysis of
$e^*_\gamma$. Namely
 $e^*_\gamma=\sum_{j=3}^\ell (d^*_{\gamma_j}+ r_j e^*_{\eta_j}) +e^*_{\gamma_2}$,
where $\gamma_j=(r_1x^*_1,\ldots,r_\ell x^*_\ell)$, for $2\le j\le
\ell$, and $\eta_j$ is the special $c$-decomposition of $x^*_j$, for
$3\le j\le \ell$. By considering the different cases where
$|\supp_{\bE^*}(x_1^*)|$  has one or more elements we have

\begin{equation}\label{E:4.10} e^*_\gamma=
\begin{cases} \sum_{j=1}^\ell d^*_{\gamma_j}+ r_j e^*_{\eta_j} &\text{if $|\supp_{\bE^*}(x_1^*)|=1$}\\
 \sum_{j=2}^\ell (d^*_{\gamma_j}+ r_j e^*_{\eta_j} )+d^*_{\gamma_1} + r_1 e^*_{\xi'}+r_1s_m e^*_{\eta'}        &\text{if $|\supp_{\bE^*}(x_1^*)|>1$,}
 \end{cases}
\end{equation}
where in the bottom displayed formula, using case (ii),
$\xi'_1=(s_1y^*_1,\ldots,s_{m-1}y^*_{m-1})$, where
  $(s_1y^*_1,\ldots,s_{m-1}y^*_{m-1}, s_m y^*_m )$
is the  special $c$-decomposition  of $x^*_1$) and $\eta'$ is the
special $c$-decomposition of $y^*_m$.

From \ref{E:4.10}, Corollary  \ref{C:3.7} and our construction using special $c$-decompositions of elements of $D$, it follows that $(F_i)$ is a shrinking FDD, if $(E_i)$ is a shrinking FDD.
Indeed, then the set $\{(\min\supp_{\bE^*} x^*_i)_{i=1}^\ell : (r_1x^*_1,\ldots ,r_\ell x^*_\ell)\kin\Gamma\}$ is compact. From the analysis \eqref{E:4.10}
 we see that $\cC=\{\cuts(\gamma):\gamma\in\Gamma\}$ is also compact.

 To complete the proof of Theorem~A it remains only to show that $X$ embeds into $Y$, the Bourgain-Delbaen space associated to $(\Delta_n)$.
  As in Section \ref{S:2} we let $J_m:\ell_\infty(\Gamma_m)\to Y\subset \ell_\infty(\Gamma)$ be the extension operator, for $m\in\N$.

\begin{defn}\label{D:4.4}
For $i\!\in\! \N$, define
$\phi_i :E_i \to \ell_\infty (\Delta_{m_i})$
by $\phi_i (x) (rx^*) = rx^* (x)$.
Define
$\phi : c_{00} (\bigoplus_{i=1}^\infty E_i) \to Y = \overline{\bigcup_m Y_m}
\subseteq \ell_\infty (\Gamma)$
by $\phi (x) = \sum_i J_{m_i} \circ \phi_i (P_i^{\bE} x)\!\in\! c_{00}
(\bigoplus_{i=1}^\infty F_{m_i})$.
\end{defn}

In proving that $X$ embeds into $Y$ we will use the following connection between
the functionals $e^*_\gamma$ and the elements $\gamma\in\Gamma$
 deriving from the  elements of $D$.
 \begin{align}\label{E:4.13a}
 &\text{If $n\not\in\{m_j:j\in\N\}$ and $\gamma=(r_1x^*_1,\ldots, r_\ell x^*_\ell)\in\Delta_n$, then $c_\gamma^*=\alpha e^*_\xi+\beta e^*_\eta$,}\\
  &\text{where $\xi=(s_1y_1^*,s_2y_2^*,\ldots, s_k y^*_k)$ and  $\eta=(t_1z^*_1,\ldots, t_mz^*_m)$ are in $\Delta_{n-1}$, such that} \notag \\
  &\qquad \sum_{i=1}^\ell r_i x^*_i=\alpha \sum_{i=1}^\ell s_i y^*_i+\beta\sum_{i=1}^\ell t_i z^*_i.\notag
 \end{align}
This is easily verified using (ii), (iii) and (iv).
 Note that, since $A^*_i\subset B_{E_i^*}$ is $(1-\vp/4)$-norming  $E_i$,
 $(1-\vp/4)\|x\|\le \|\phi_i(x)\|\le \|x\|$ for all $x\in E_i$.
\begin{prop}\label{P:map-phi}
The map $\phi$ extends to a  isomorphism of $X$ into $Y$,
and
 $$(1-\vp)\|x\|\le \|\phi(x)\|\le \|x\| \text{ for all $x\kin X$}.$$
\end{prop}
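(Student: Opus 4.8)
The plan is to establish the two-sided estimate $(1-\vp)\|x\|\le\|\phi(x)\|\le\|x\|$ for $x\in c_{00}(\oplus E_i)$ and then extend by density and boundedness below to all of $X$. Since each $J_{m_i}$ has norm at most $M$, the map $\phi$ is already bounded, so the real content is the lower estimate together with the (somewhat surprising) upper estimate $\|\phi(x)\|\le\|x\|$, which is sharper than $M\|x\|$.

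For the \textbf{upper estimate}, I would show by induction on $\rk(\gamma)$ that $|e^*_\gamma(\phi(x))|\le\|x\|$ for every $\gamma\in\Gamma$ and every $x\in B_X\cap c_{00}(\oplus E_i)$; taking the supremum over $\gamma$ (recalling that $(e^*_\gamma)$ is a norming system for $Y\subseteq\ell_\infty(\Gamma)$, indeed $\|y\|=\sup_\gamma|e^*_\gamma(y)|$) gives $\|\phi(x)\|\le\|x\|$. The base case $\gamma\in\Delta_{m_j}$ is immediate: $e^*_\gamma(\phi(x))=rx^*(P_j^{\bE}x)$ with $r\in R_j\subseteq(0,1]$ and $x^*\in A_j^*\subseteq B_{E_j^*}$, and by bimonotonicity $\|P_j^{\bE}x\|\le\|x\|$. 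For the inductive step, use the key identity \eqref{E:4.13a}: if $\gamma=(r_1x_1^*,\dots,r_\ell x_\ell^*)\in\Delta_n$ with $n\notin\{m_j\}$, then $e^*_\gamma=d^*_\gamma+c^*_\gamma=d^*_\gamma+\alpha e^*_\xi+\beta e^*_\eta$ with $\xi,\eta$ of lower rank and, crucially, $\sum r_ix_i^*=\alpha\sum s_iy_i^*+\beta\sum t_iz_i^*$ as functionals on $X$; moreover $d^*_\gamma(\phi(x))=0$ since $\phi(x)$ lies in the span of the $F_{m_i}$'s while $n\notin\{m_j\}$ means $d^*_\gamma\in F_n^*$ with $n\ne m_j$ for any $j$ (one must check $\phi(x)$ has zero $d^*_\gamma$-coordinate for such $\gamma$; this follows from $\phi(x)\in c_{00}(\oplus F_{m_i})$ and the block structure). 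Then $e^*_\gamma(\phi(x))=\alpha e^*_\xi(\phi(x))+\beta e^*_\eta(\phi(x))$, and the induction hypothesis bounds both terms by $\|x\|$; the identity on functionals forces $\alpha+\beta\le 1$ (equivalently $\bigl\|\sum r_ix_i^*\bigr\|\le 1$ since each piece of the convex decomposition has norm at most one and they are "aligned"), yielding $|e^*_\gamma(\phi(x))|\le(\alpha+\beta)\|x\|\le\|x\|$. One must be a little careful about how $\alpha,\beta$ arise in the three cases (ii)--(iv), but in each case they are exactly the scaling factors $r_1$, $r_2$, $r_\ell$ etc., and the relation $\sum r_ix_i^*=\alpha(\cdots)+\beta(\cdots)$ records precisely that $e^*_\gamma$ "reconstitutes" the functional $\sum r_ix_i^*$ when evaluated on the image of $X$.

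For the \textbf{lower estimate}, given $x\in S_X\cap c_{00}(\oplus E_i)$ with $\ran_{\bE}(x)\subseteq[1,N]$, pick $y^*\in D$ with $y^*(x)\ge 1-\vp/4$ (possible since $D$ is $(1-\vp/4)$-norming after the rescaling in Lemma \ref{L:4.1}(a)), and let $\gamma\in\Gamma$ be the element whose special $c$-decomposition has $y^*$ as its "full" decomposition, i.e. $\gamma=(r_1x_1^*,\dots,r_\ell x_\ell^*)$ with $\sum r_ix_i^*=y^*$. I would then show $e^*_\gamma(\phi(x))$ is close to $y^*(x)$: by the functional identity propagated through the analysis \eqref{E:4.10}, evaluating $e^*_\gamma$ on $\phi(x)$ recovers $\sum_i (r_ix_i^*)(P_{?}^{\bE}x)$ up to the error introduced by the perturbation $\|\tilde h-h\|\le\sum_{j\in\supp}\vp_j$ from Lemma \ref{L:4.1}(c) (equations \eqref{E:4.6}--\eqref{E:4.8}) and by the $\vp_i/8$-density of $R_i$ and $\tilde A_i^*$. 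Summing these errors against \eqref{eq:4.1} gives a total error below $\vp/2$, so $\|\phi(x)\|\ge e^*_\gamma(\phi(x))\ge y^*(x)-\vp/2\ge 1-\vp$.

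The \textbf{main obstacle} I anticipate is bookkeeping in the lower estimate: tracking exactly how the special $c$-decomposition of $y^*$ unwinds through the recursive definition of $c^*_\gamma$ in cases (ii)--(iv) and verifying that $e^*_\gamma(\phi(x))$ genuinely equals $\sum_i(r_ix_i^*)(x)$ modulo the controlled $\vp$-errors — this requires carefully matching the coordinate projections $P^{\bF^*}_{(k,n]}$ appearing in the definition of $c^*_\gamma$ with the supports $\supp_{\bE^*}(x_i^*)$, using \eqref{E:4.9} and Proposition \ref{P:partialorder}(b) to see that all relevant projections act as the identity on the pieces $e^*_{\eta_j}$. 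The upper estimate, by contrast, is a clean induction once the identity \eqref{E:4.13a} is in hand. Finally, extending $\phi$ from $c_{00}(\oplus E_i)$ to $X$ is routine: the lower estimate shows $\phi$ is bounded below hence extends to an isomorphic embedding of the closure, which is all of $X$.
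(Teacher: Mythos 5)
Your overall strategy — use the density argument, reduce to a two-sided bound on $c_{00}(\oplus E_i)$, and lean on the interaction between $e^*_\gamma$ and $\phi$ — is the right one, and you correctly identify \eqref{E:4.13a} as the engine of the proof. But there is a real gap in your upper estimate, and a misconception about the lower estimate.

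For the \textbf{upper estimate}, your induction tries to deduce $|e^*_\gamma(\phi(x))|\le\|x\|$ directly, via $|e^*_\gamma(\phi(x))|\le\alpha|e^*_\xi(\phi(x))|+\beta|e^*_\eta(\phi(x))|\le(\alpha+\beta)\|x\|$, and you assert $\alpha+\beta\le 1$. This is false in cases (iii) and (iv). In case (iv), $c^*_\gamma=e^*_\xi+r_\ell e^*_\eta$, so $\alpha=1$ and $\beta=r_\ell>0$, giving $\alpha+\beta>1$. In case (iii), $c^*_\gamma=r_1e^*_\xi+r_2e^*_\eta$ with $r_1,r_2\in(0,1]$ unconstrained by $c$ when both supports are singletons, so again $\alpha+\beta$ can exceed $1$. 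Your parenthetical remark that the real point is $\|\sum r_ix_i^*\|\le 1$ is the correct observation, but it is not equivalent to $\alpha+\beta\le 1$, and the inductive inequality as you set it up does not close. What the paper actually proves by induction on $\rk(\gamma)$ is the \emph{exact} scalar identity $e^*_\gamma(\phi(x))=\sum_{j=1}^\ell r_j x_j^*(x)$; once that identity is in hand, the inequality is a single non-inductive step: $\sum_{j=1}^\ell r_j x_j^*$ is the restriction $P^{\bE^*}_{[a,b]}(y^*)$ of some $y^*\in D\subset B_{X^*}$ to an interval of coordinates (because $(r_1x_1^*,\ldots,r_\ell x_\ell^*)$ is an initial segment of a $c$-decomposition of $y^*$), and bimonotonicity of $\bE$ gives $\|\sum r_jx_j^*\|\le\|y^*\|\le 1$.

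For the \textbf{lower estimate}, you anticipate having to ``track $\vp$-errors'' propagating through the analysis of $\gamma$. There are none: the identity $e^*_\gamma(\phi(x))=\sum r_jx_j^*(x)$ is exact, and the only appearance of $\vp$ is the norming constant of $D$ from Lemma \ref{L:4.1}(b). So the lower bound is simply: choose $y^*\in D$ with $|y^*(x)|\ge(1-\vp)\|x\|$, note $y^*=\sum r_jx_j^*$ is itself the full special $c$-decomposition of some $\gamma\in\Gamma$, and then $\|\phi(x)\|\ge|e^*_\gamma(\phi(x))|=|y^*(x)|\ge(1-\vp)\|x\|$, with no error-summation needed. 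Your ``main obstacle'' dissolves once you realise the perturbations from Lemma \ref{L:4.1} are already absorbed into $D$ and $\Gamma$ themselves — they were used to construct $D$, not to relate $e^*_\gamma$ to $D$.

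In short: replace ``prove the inequality $|e^*_\gamma(\phi(x))|\le\|x\|$ by induction'' with ``prove the identity $e^*_\gamma(\phi(x))=\sum r_jx_j^*(x)$ by induction, then deduce both inequalities in one line each from bimonotonicity and the norming property of $D$.''
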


\begin{proof}
Using \eqref{E:4.13a}  and the definition of $\phi_j$, $j\in\N$, we
deduce, by induction on the rank of $\gamma\in\Gamma$, that for all
$\gamma=(r_1x^*_1,\ldots, r_\ell x^*_\ell)\in\Gamma$ and all $x\in
\coo(\oplus_{j=1}^\infty E_j)$,
$$e^*_\gamma(\phi(x))=\sum_{j=1}^\ell r_j x^*_j(x).$$
Using the bimonotonicity of $\bE$ in $X$, and the properties of the set $D\subset B_{X^*}$ as
 listed in Lemma \ref{L:4.1} we obtain for $x\in\coo(\oplus_{j=1}^\infty E_j)$
 $$(1-\vp)\|x\|\le \sup_{x^*\in D} |x^*(x)|=\sup_{\gamma=(r_1x^*_1,\ldots, r_\ell x^*_\ell)\in \Gamma}
\Big|\sum_{i=1}^\ell r_jx^*_j(x)\Big|=\sup_{\gamma\in \Gamma}\big| e^*_\gamma(\phi(x))\big|\le \|x\|,$$
which implies our claim.\end{proof}

We will be using the construction of $Y$ and all the terminology and
notation of that construction in the next two sections. In the proof
of Theorems~B and C we will also be using the construction for $V$
replacing $X$ where $V$ has a normalized bimonotone basis
$(v_i)_{i=1}^\infty$. In this case the $v_i$'s play the role of the
$E_i$'s, more precisely $E_i$ is replaced by $\text{span}(v_i)$. To
help distinguish things we will write $BD_X$ and $BD_V$ for the
respective $\cL_\infty$ spaces containing isomorphs of $X$ and $V$.

Finally, it is perhaps worth noting that, in the $V$ case we could
alter the proof slightly by allowing the scalars  $R_i$ to be
negative and $\vp_i/8$-dense in $[-1,1]\setminus \{0\}$ and take
$A_j^* = \{\frac1{1+\vp/4}v_j^*\}$. In the case that $(v_i)$ is also
1-unconditional we can use $A_j^* = \{v_j^*\}$ (see the second part
of Lemma \ref{L:4.1}). We would then obtain

\begin{cor}\label{C:alteredproof}
Let $V$ be a Banach space with a normalized bimonotone shrinking
basis $(v_i)_{i=1}^\infty$. Then $W$ embeds into a $\cL_\infty$
space $Z$, with a shrinking basis $(z_i)_{i=1}^\infty$ so that
$(v_i)_{i=1}^\infty$ is equivalent to some subsequence of
$(z_i)_{i=1}^\infty$.
\end{cor}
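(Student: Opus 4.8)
The plan is to specialize the Theorem A construction to the case where the "coordinate spaces" $E_i$ are one-dimensional, spanned by the basis vectors $v_i$, and then to observe that the FDD $(F_j)$ of $Y=BD_V$ can be refined into a genuine Schauder basis. First I would carry out the construction of Section~\ref{S:4} verbatim with $X=V$, $E_i=\operatorname{span}(v_i)$, choosing $\tilde A_i^*=\{v_i^*\}$ (using the $1$-unconditional version, or $A_j^*=\{\frac1{1+\vp/4}v_j^*\}$ in the merely bimonotone case, with the $R_i$ taken $\vp_i/8$-dense in $[-1,1]\setminus\{0\}$). Since $(v_i)$ is shrinking, \eqref{E:4.3a} and the analysis \eqref{E:4.10} together with Corollary~\ref{C:3.7} show that the resulting FDD $\bF=(F_j)$ of $Z:=BD_V$ is shrinking; hence $Z^*\cong\ell_1$ and $Z$ is $\cL_\infty$. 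The embedding $\phi$ of Proposition~\ref{P:map-phi} maps $V$ isomorphically into $Z$, and because $\phi(v_i)=J_{m_i}\circ\phi_i(v_i)\in F_{m_i}$ with $\phi_i(v_i)$ a single basis-like vector of $\ell_\infty(\Delta_{m_i})$ of norm in $[1-\vp/4,1]$, the images $\phi(v_i)$ form a block sequence of $\bF$ equivalent to $(v_i)$.

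Next I would promote $\bF$ to a basis. Each $F_j=J_j(\ell_\infty(\Delta_j))$ is, by Proposition~\ref{P:1.4a}, isometric to $\ell_\infty(\Delta_j)$, hence has a monotone basis given by the (images under $J_j$ of the) unit vector basis $(e_\gamma:\gamma\in\Delta_j)$ of $\ell_\infty(\Delta_j)$; concatenating these finite bases in the order of the FDD gives a Schauder basis $(z_i)_{i=1}^\infty$ of $Z$ (since $\bF$ is an FDD with finite decomposition constant $M\le 2$, a standard argument shows the concatenation of monotone bases of the $F_j$ is a basis, with basis constant at most $M$ times the internal constants, i.e.\ $\le 2$). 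This basis is shrinking because $\bF$ is shrinking and each $F_j$ is finite-dimensional. Finally, for the special index $m_i$, the set $\Delta_{m_i}=\{(rv_i^*):r\in R_i\}$ and $J_{m_i}$ is an isometry onto $F_{m_i}$, and I would arrange the construction so that the vector $\phi(v_i)$ is (a fixed scalar multiple of) one of the chosen basis vectors $z_{k_i}$ of $F_{m_i}$ — concretely, one can add the constantly-$1$ element or simply note that $\phi_i(v_i)=(r)_{r\in R_i}\in\ell_\infty(\Delta_{m_i})$ and choose the ordering of the basis of $F_{m_i}$ so that this specific vector appears. Then $(z_{k_i})_{i=1}^\infty$ is a subsequence of $(z_i)$ equivalent to $(v_i)$, which is what is claimed.

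The main obstacle is the last point: making $\phi(v_i)$ literally a member of the chosen basis of $F_{m_i}$, rather than merely a vector living in $F_{m_i}$. Since $\phi_i(v_i)=(r)_{r\in R_i}$ is not a unit vector $e_\gamma$ of $\ell_\infty(\Delta_{m_i})$ but a full-support vector (its coordinates range over all of $R_i$), one cannot just pick it out of the unit vector basis. The clean fix is to enlarge $\Delta_{m_i}$ (or rather to choose a different monotone basis of $\ell_\infty(\Delta_{m_i})$) so that this particular vector is a basis element: any vector of norm one in a finite-dimensional $\ell_\infty^n$ can be completed to a monotone basis of a renorming, but we need it without renorming $Z$. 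A safer route, and the one I would actually adopt, is to observe that the claim only requires $(z_{k_i})$ to be \emph{equivalent} to $(v_i)$ for \emph{some} subsequence $(z_{k_i})$; so it suffices that $\phi(v_i)$ be a \emph{constant multiple} of a basis vector of $F_{m_i}$. By the remark at the end of Section~\ref{S:4} we are free to take $A_i^*=\{v_i^*\}$ (the $1$-unconditional case) and to take $R_i$ to be a fixed finite symmetric $\vp_i/8$-net of $[-1,1]\setminus\{0\}$ containing $1$; then refine $F_{m_i}\cong\ell_\infty(\Delta_{m_i})=\ell_\infty^{|R_i|}$ using the Auerbach/James basis adapted so that $\phi_i(v_i)$ is its first vector. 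This is a finite-dimensional renorming-free fact about $\ell_\infty^n$ that one can verify directly, and it completes the argument.
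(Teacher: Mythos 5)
Your proof is correct and fills in the details behind the paper's brief remark preceding the corollary (the paper only sketches the alteration — singleton $A_i^*$, symmetric $R_i$ — and asserts ``we would then obtain''). You correctly identify the one non-obvious step, namely that $\phi_i(v_i)\in\ell_\infty(\Delta_{m_i})$ is a full-support vector rather than a coordinate unit vector, and your fix is sound: since $\phi_i(v_i)$ attains its norm at the coordinate indexed by $r=1$, the system consisting of $\phi_i(v_i)$ followed by the unit vectors $e_{rv_i^*}$ with $r\neq 1$ is a monotone basis of $\ell_\infty(\Delta_{m_i})$ (the $r=1$ coordinate of any initial partial sum is $a_0$, which dominates $|a_0 r|$ for every other $r$), so these bases, together with the unit-vector bases of the remaining $F_j$'s, patch across the shrinking FDD $\bF$ into a shrinking basis of $Z$ containing the subsequence $(\phi(v_i)/\|\phi(v_i)\|)$ equivalent to $(v_i)$, exactly as required.
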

In  case that $V$ is the   Tsirelson space  $T_{c,\alpha}$   the
construction of a Bourgain-Delbaen space containing $V$ becomes
simpler.

\begin{rem}\label{R:4.5}  Let $X$ be the Tsirelson space $T_{c,\alpha}$,
where $\alpha<\omega_1$ and $c\le \sfrac1{16}$.
 In $T_{c,\alpha}^*$ there is a natural choice  for  the set $D$  satisfying the conditions of  Lemma \ref{L:4.1} ($1$-unconditional case). Indeed, we
  let
 $D=\bigcup_{n=0}^\infty D_n$, where $D_n$, $n\ge 0$ is defined
 by induction
 \begin{align}
 D_0&=\{\pm e^*_j: j\in\N\}\text{ and assuming $D_0,D_1\ldots D_n$ have been defined we let }\\
 D_{n+1}&=\left\{ c\sum_{i=1}^k x^*_i :
  \begin{matrix} k\!\ge\!2,\, x^*_i\in \bigcup_{j=0}^n D_j, \text{ for $i\le k$},\quad \{\min\supp(x^*_i):i\le k\}\in S_\alpha, \\
 \text{and }\max \supp(x^*_i)<\min\supp(x^*_{i+1}), \text{if $i<k.$}
   \end{matrix}
  \right\}.\notag
 \end{align}
 In that case $D$ 1-norms $T_{c,\alpha}$
     and  $\Gamma$ also has a simple form in this case:
  \begin{equation*}
  \Gamma_{\alpha,c}=\left \{(cx^*_1,cx^*_2, \ldots,c x_\ell^*): \begin{matrix} \ell\!\ge\!2, x^*_i\in D,  \text{ for $i\le \ell$,}\, \{\min\supp(x^*_i):i\le \ell\}\in S_\alpha, \\
 \text{and }\max \supp(x^*_i)<\min\supp(x^*_{i+1}), \text{if $i<\ell,$}
   \end{matrix}
  \right\}\cup  D_0.
  \end{equation*}
   Our construction  in  Theorem A  leads  then to a Bourgain-Delbaen space containing
   isometrically $T_{c,\alpha}$  and it is very similar  (but simpler) than the construction in \cite{AH}
   where a {\em mixed Tsirelson space}  was used instead of  $T_{c,\alpha}$.
\end{rem}
In summary, our proof of Theorem A, then yields the following
theorem.

\begin{thm}\label{T:4.9}  Let $X$ be a Banach space with a bimonotone FDD $\bE=(E_j)$ and let $\vp>0$. Then $X$ embeds into a Bourgain-Delbaen space $Z$ having an FDD $\bF=(F_j)$, such that
\begin{enumerate}
\item[a)] For $n\in\N$, there are embeddings
$\phi_n: E_n\to F_{m_n}$, so that
$$\phi:\coo\big(\oplus_{n=1}^\infty E_n\big)\to Z,\quad \sum x_n\mapsto \sum \phi_n(x_n)$$
extends to an isomorphism from $X$ into $Z$ with $(1-\vp)\|x\|\le
\|\phi(x)\|\le \|x\|$ for $x\in X$.
\item[b)] $\bF$ is shrinking (in $Z$) if $\bE$ is shrinking (in $X$).
\end{enumerate}
\end{thm}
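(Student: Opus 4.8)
The plan is essentially to collect the pieces assembled throughout Section~\ref{S:4}; no new construction is needed. First I would fix $0<c\le\sfrac1{16}$ and a sequence $(\vp_i)\downarrow 0$ in $(0,\vp)$ satisfying \eqref{eq:4.1}, choose the $\sfrac{\vp_i}8$-dense sets $R_i\subset(0,1]$ (with $1\in R_i$) and $\tilde A_i^*\subset S_{E_i^*}$, and invoke Lemma~\ref{L:4.1} to produce the countable norming set $D\subset(B_{X^*}\setminus\sfrac12 B_{X^*})\cap\coo(\oplus E_i^*)$ together with, for each $x^*\in D$, its special $c$-decomposition. From $D$ one builds $\Gamma$, the well-order on bounded intervals of $\N$, the rank function, the partial order $\le$, and the subsequence $(m_j)$ with $m_{j+1}=m_j+j$, exactly as in the text; then the $c^*_\gamma$ are defined by the case analysis (i)--(iv) and the recoding $\gamma\mapsto\tilde\gamma$ puts $(\Delta_n)$ into the format of Definition~\ref{D:1.1}.

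Next I would check that this recoded sequence is a legitimate Bourgain-Delbaen sequence of the type we need. Here one verifies that every $\tilde\gamma$ has the shape prescribed in \eqref{E:1.1.1} or \eqref{E:1.1.2}, that the redundant scaling is consistent (using $P^{\bF^*}_{(k,n]}(e^*_\eta)=e^*_\eta$, which holds by Proposition~\ref{P:partialorder} and \eqref{E:4.9}), and that condition \eqref{E:1.1a} holds with $\theta=\sfrac18$: every scalar $r$ appearing is $\le 2c\le\sfrac18$ unless the relevant $b^*$ equals some $e^*_\eta$ with $c^*_\eta=0$. By Remark~\ref{R:1.4a} this makes $\bF^*=(F_j^*)$ an FDD for $\ell_1$ with decomposition constant $M\le 2$, so the associated Bourgain-Delbaen space $Z=Y$ is $\cL_{\infty,2}$ and carries the FDD $\bF=(F_j)$ from Section~\ref{S:2}.

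For part (a) I would take $\phi_n:=\phi_i$ from Definition~\ref{D:4.4}, so that $\phi_n(x)(rx^*)=rx^*(x)$ lands in $\ell_\infty(\Delta_{m_n})$, identified isometrically with $F_{m_n}$ via $J_{m_n}$ by Proposition~\ref{P:1.4a}. The estimate $(1-\vp)\|x\|\le\|\phi(x)\|\le\|x\|$ is then precisely Proposition~\ref{P:map-phi}: one first shows by induction on $\rk(\gamma)$ that $e^*_\gamma(\phi(x))=\sum_j r_j x^*_j(x)$ for $\gamma=(r_1x_1^*,\dots,r_\ell x_\ell^*)$, using \eqref{E:4.13a}, and then uses that $D$ is $(1-\vp)$-norming together with the bimonotonicity of $\bE$. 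For part (b), assuming $\bE$ shrinking, \eqref{E:4.3a} gives that the family $\cG$ is relatively compact, equivalently that $\{(\min\supp_{\bE^*}x_i^*)_{i=1}^\ell:(r_1x_1^*,\dots,r_\ell x_\ell^*)\in\Gamma\}$ has no infinite strictly increasing chain. Reading the analysis of $e^*_\gamma$ off \eqref{E:4.10}, one sees that $\cuts(\gamma)$ is, up to the shift $i\mapsto m_i$ recorded in \eqref{E:4.9} and at most one extra point coming from the type-(ii) splitting, a spread of such a set, so $\cC=\{\cuts(\gamma):\gamma\in\Gamma\}$ is also relatively compact; then Corollary~\ref{C:3.7} yields that $\bF$ is shrinking, i.e. $Z^*\cong\ell_1$.

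The hard part is not a single deep step but the bookkeeping in part (b): one must track carefully how $\cuts(\gamma)$ is generated by iterating case (iv) and how the single extra cut in case (ii) is controlled, and then invoke the shift \eqref{E:4.9} to pass from relative compactness of $\cG$ to that of $\cC$. Everything else is a direct appeal to Lemma~\ref{L:4.1}, Proposition~\ref{P:map-phi}, Remark~\ref{R:1.4a}, and Corollary~\ref{C:3.7}.
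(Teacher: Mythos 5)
Your proposal is correct and follows exactly the paper's own route: the paper states Theorem~\ref{T:4.9} with the preamble ``In summary, our proof of Theorem A, then yields the following theorem,'' and your write-up collects precisely the same ingredients in the same order — Lemma~\ref{L:4.1} and the special $c$-decompositions, the recoding into Definition~\ref{D:1.1} and verification of \eqref{E:1.1a} with Remark~\ref{R:1.4a} giving $M\le 2$, Definition~\ref{D:4.4} and Proposition~\ref{P:map-phi} for part (a), and \eqref{E:4.3a}, \eqref{E:4.10}, \eqref{E:4.9} plus Corollary~\ref{C:3.7} for part (b). No discrepancies of substance; the one point worth tightening in a final write-up is the spread-plus-one-extra-cut argument passing from compactness of $\cG$ to compactness of $\cC$, which the paper also leaves at the same level of detail.
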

From Theorem \ref{T:4.9} and \cite[Corollary 3.5]{FOSZ} we obtain

\begin{cor}\label{C:4.10} There exists a collection $\{Y_\alpha:\alpha<\omega_1\}$ of
$\cL_{\infty,2}$ spaces such that
 $Y^*_\alpha$ is $2$-isomorphic to $\ell_1$, and
 $Y_\alpha$ is universal for the class
 $\mathcal D_\alpha=\big\{X: X\text{ separable and }S_z(X)\le \alpha\big\},$
 for all $\alpha<\omega_1$.
\end{cor}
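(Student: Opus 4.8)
The plan is to combine Theorem~\ref{T:4.9} with the uniform embedding result of \cite{FOSZ}. Fix $\alpha<\omega_1$. If $X$ is separable with $S_z(X)\le\alpha$, then $S_z(X)<\omega_1$, so $X^*$ is separable. Since $\sup\{\omega^{\beta\cdot\omega}:\beta<\omega_1\}=\omega_1$, choose $\beta=\beta(\alpha)<\omega_1$ with $\alpha\le\omega^{\beta\cdot\omega}$, so that every $X\in\mathcal D_\alpha$ satisfies $S_z(X)\le\omega^{\beta\cdot\omega}$. By \cite[Corollary 3.5]{FOSZ} there is a single separable Banach space $Z_\alpha$ with a shrinking {\rm FDD} into which every such $X$ isomorphically embeds (one may take the {\rm FDD} to satisfy subsequential $T_{c,\beta}$-upper estimates for some $0<c<1$, which forces it to be shrinking by the remark following the definition of the Tsirelson spaces). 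Renorming $Z_\alpha$ by $\NORM{x}=\sup_{m\le n}\|P^{\bE}_{[m,n]}x\|$, we may assume its {\rm FDD} $\bE$ is bimonotone; this preserves both the shrinking property and the fact that each $X\in\mathcal D_\alpha$ embeds into $Z_\alpha$.

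Next, apply Theorem~\ref{T:4.9} to $(Z_\alpha,\bE)$. This yields a Bourgain-Delbaen space $Y_\alpha$, built exactly as in Section~\ref{S:4}, with an {\rm FDD} $\bF=(F_j)$ and an isomorphic embedding $\phi$ of $Z_\alpha$ into $Y_\alpha$. The Bourgain-Delbaen construction of Section~\ref{S:4} has decomposition constant $M\le2$, so $Y_\alpha$ is a $\cL_{\infty,2}$ space. Since $\bE$ is shrinking, Theorem~\ref{T:4.9}(b) gives that $\bF$ is shrinking in $Y_\alpha$, so $Y_\alpha^*$ is identified as a vector space with $\ell_1(\Gamma)$; Proposition~\ref{P:1.4b}, used with $M\le2$, then shows that the identity map between $\ell_1(\Gamma)$ with its $\ell_1$-norm and $Y_\alpha^*$ with its dual norm is a $2$-isomorphism, i.e. $Y_\alpha^*$ is $2$-isomorphic to $\ell_1$.

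Finally, universality follows by transitivity of embeddings: for $X\in\mathcal D_\alpha$ we have $X\hookrightarrow Z_\alpha$ by the choice of $Z_\alpha$ and $Z_\alpha\hookrightarrow Y_\alpha$ by Theorem~\ref{T:4.9}(a), so $X\hookrightarrow Y_\alpha$. Hence $\{Y_\alpha:\alpha<\omega_1\}$ is the desired collection.

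I do not expect a genuine obstacle here; the only points demanding care are the ordinal bookkeeping that translates the hypothesis $S_z(X)\le\alpha$ into the index $\omega^{\beta\cdot\omega}$ needed to invoke \cite[Corollary 3.5]{FOSZ}, and the tracking of the Bourgain-Delbaen constants together with the constant in Proposition~\ref{P:1.4b}, which is exactly what delivers the quantitative conclusions ``$\cL_{\infty,2}$'' and ``$2$-isomorphic to $\ell_1$''.
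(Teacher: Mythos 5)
Your proof is correct and is exactly the argument the paper has in mind: the paper gives no explicit proof, stating only that the corollary ``follows from Theorem~\ref{T:4.9} and \cite[Corollary 3.5]{FOSZ},'' and you have simply spelled out the implicit chain — invoke \cite[Corollary 3.5]{FOSZ} to get a universal space $Z_\alpha$ with a (bimonotone, after renorming) shrinking FDD for the class $\mathcal D_\alpha$, feed it into Theorem~\ref{T:4.9}, and read off the constants $M\le 2$ from the Section~\ref{S:4} construction together with Proposition~\ref{P:1.4b}. The ordinal bookkeeping ($\alpha\le\omega^{\beta\cdot\omega}$ for suitable $\beta<\omega_1$) and the constant-tracking are handled correctly.
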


\section{The proof of Theorems B and C}\label{S:5}

 The constructions which will be used to prove Theorems B and C  are {\em augmentations} of  sequences of  Bourgain-Delbaen sets as introduced in
 Section \ref{S:2}.
\begin{defn}\label{D:augmentation}
Assume that $(\Delta_n)$  is a sequence of Bourgain-Delbaen  sets,
and assume
  that $(\Delta_n)$ satisfies the assumptions of Proposition $\ref{P:1.4}$ with $C<\infty$, and hence $M<\infty$. We denote the Bourgain-Delbaen space associated with
     $(\Delta_n)$ by $Y$ and its FDD by $\bF=(F_n)$.
      Since we will deal with different Bourgain-Delbaen spaces
     we denote from now on the projections $P_A$ of $Y$ onto $\oplus_{j\in A} F_j$, $A\subset \N$ finite or cofinite,
     by $P^{\bF}_A$.

 An {\em augmentation of $(\Delta_n)$}, is then a sequence of finite, possibly empty,  sets $(\Theta_n)$ having the property
 that $(\Deltab_n):=(\Delta_n\cup \Theta_n)$ is again a sequence of Bourgain-Delbaen sets. More concretely, this means the following.
 $\Theta_1$ is a finite set and assuming that for some $n\in\N$, $(\Theta_j)_{j=1}^n$  have been chosen, we let
 $\Deltab_j=\Delta_j\cup \Theta_j$,  $\Lambda_j=\bigcup_{i=1}^j \Theta_i$, and $\Gammab_j=\bigcup_{i=1}^j \Deltab_i$, for $j\le n$, where
 $\Theta_{n+1}$ is the union of two sets, $\Theta_{n+1}^{(0)}$ and $\Theta_{n+1}^{(1)}$, which
  satisfy the following conditions.

 $\Theta_{n+1}^{(0)}$ is  finite and
 \begin{align}\label{E:6.2.1}
 \Theta_{n+1}^{(0)}\subset \big\{(n+1,\beta,b^*,f): \beta\kin[0,1], b^*\kin B_{\ell_1(\Gammab_n)}, \text{ and }f\kin W_{(n+1,\beta,b^*)}\big\},
 \end{align}
 where $W_{(n+1,\beta,b^*)}$ is a finite set for $\beta\kin[0,1]$ and $b^*\kin B_{\ell_1(\Gammab_n)}$.

$\Theta^{(1)}_{n+1}$ is finite and
 \begin{equation}\label{E:6.2.2}
\Theta_{n+1}^{(1)} \subset \left\{ (n+1, \alpha, k,\xib, \beta, b^*,f):
 \begin{matrix} \alpha,\beta\kin[0,1], k\kin\{1,2,\ldots n-1\}, \xib\kin \Deltab_k, \\
b^*\kin B_{\ell_1(\Gammab_n\setminus \Gammab_k)} \text{ and } f\kin
W_{(n+1,\alpha, k,\xib,\beta,b^*) }\end{matrix}
\right\},\end{equation} where $W_{(n+1,\alpha, k,\xib,\beta,b^*)}$ is
a finite set for $\alpha\kin[0,1]$, $k\kin\{1,2,\ldots, n-1\}$,
$\xib\kin\Deltab_k$,  $\beta \kin[0,1]$, and  $b^*\kin
B_{\ell_1(\Gammab_n\setminus\Gammab_k)}$.

We denote the
 corresponding functionals (see Definition \ref{D:1.3}) by $c_{\gammab}^*$ for $\gammab\in\Gammab$.   We require also that
  $(\Deltab_n)$ satisfies  the conditions of Proposition \ref{P:1.4},
 so that $\bFb^*=(\Fb_n^*)$, with $\Fb_n^*=\spa(e^*_{\gammab}:\gammab\in \Deltab_n)$ is an FDD of $\ell_1(\Gammab)$ whose decomposition constant
   $\Mb$   can be estimated as in Proposition \ref{P:1.4}.  We denote then the associated Bourgain-Delbaen space by $Z$, and its FDD by $\bFb=(\Fb_n)$.
   As in Section \ref{S:2}, we denote the projections from $Z$ onto $\oplus_{i=k}^m \Fb_i$, by $P^{\bFb}_{[k,m]}$, if $k<m$,  or by  $P^{\bFb}_k$,  if $k=m$.
   The restriction operator from $\ell_\infty(\Gammab)$ onto $\ell_\infty(\Gammab_n)$ or   $\ell_1(\Gammab)$ onto $\ell_1(\Gammab_n)$ is denoted by
   $\Rb_n$ and the extension operator from $\ell_\infty(\Gammab_n)$ to $\oplus_{j=1}^m \Fb_j\subset Z\subset \ell_\infty(\Gammab)$ is denoted by $\Jb_m$.

   Note that by Corollary \ref{C:3.7}, under assumption \eqref{E:1.1a},  $\bFb$  is shrinking in $Z$ if  $\{\textrm{cuts}(\gamma):\gamma\in\Gamma\}$ is compact.

  \end{defn}

 \begin{rem}\label{R:5.2}
   In general $Y$ is not a subspace of $Z$. Nevertheless it follows from Proposition \ref{P:1.4a}  that
   $F_m$ is  naturally isometrically embedded into $\Fb_m$
   for $m\kin\N$. Indeed, the map
   $$\psi_m: F_m\to \Fb_m,  \quad x\mapsto \Jb_m J_m^{-1} (x)=\Jb_m(x|_{\Delta_m}),$$
   is an isometric embedding (where we consider $\ell_\infty(\Delta_m)$  to be naturally embedded  into $\ell_\infty(\Deltab_m)$ and   $\ell_\infty(\Deltab_m)$ naturally
   embedded into  $\ell_\infty(\Gammab_m)$).
   We put
   \begin{equation}\label{E:5.2}
   \psi:\coo\big(\oplus_{j=1}^\infty F_j\big)\mapsto \coo\big(\oplus_{j=1}^\infty \Fb_j\big),
    \quad (x_j)\mapsto (\psi_j(x_j)).   \end{equation}
    We define $\psi$ on $(\oplus_{j=1}^\infty F_j)_{\ell_\infty}$ by $\psi((x_j)_{j=1}^\infty)=(\psi_j(x_j))_{j=1}^\infty\in\prod_{j=1}^\infty
    \Fb_j$, a sequence in $(\Fb_j)_{j=1}^\infty$.  Note that
    if $\gammab\in\Lambda_n$ then we can regard, for $x=(x_j)\in(\oplus
    F_j)_{\ell_\infty}$, $c^*_\gammab(\psi(x))=c^*_\gamma(\sum_{j=1}^n
    \psi_j(x_j))$.  It is worth noting that for $y\in\coo\big(\oplus_{j=1}^\infty F_j\big)$,
    $\psi(y)|_\Gamma=y$. Thus $\psi$ extends such elements to elements of $Z$. However this extension  is not necessarily bounded on $Y$. In any event, if we define $\pi(z)=z|_\Gamma$ for
    $z\in Z$ then  $\pi:Z\to Y$.
\end{rem}
 The following provides a sufficient criterium for a subspace of $Y$ to also embed into the augmented space  $Z$.

  \begin{prop}\label{P:5.2}   Assume that $X$ is a subspace of the Bourgain-Delbaen space $Y$
with FDD $\bF=(F_j)$ and   which  is associated to a
Bourgain-Delbaen sequence $(\Delta_n)$.
  Assume moreover that $\coo(\oplus_{j=1}^\infty F_j)\cap X$ is dense in $X$.

 Let $(\Theta_n)$
 be an augmentation of $(\Delta_n)$ with an associated space $Z$,
 and assume that    $|c_{\gammab}^*(\psi(x))|\le c_X\|x\|$ for all  $\gammab\in\Lambda=\bigcup_{j\in\N} \Lambda_j$ and all $x\in X$.
   Then $\psi$ embeds $X$ into $Z$ and $\|x\|\le \|\psi(x)\|\le
   \max(1,c_X)\|x\|$.  Furthermore, for $x\in X$, $\pi(\psi(x))=x$.
   Thus $\pi:\psi(X)\rightarrow X$ is the inverse isomorphism of
   $\psi|_X$.
  \end{prop}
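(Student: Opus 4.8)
The plan is to prove the two norm estimates and the identity $\pi\circ\psi=\Id$ first on the dense subspace $X_0:=X\cap\coo\big(\oplus_{j=1}^\infty F_j\big)$, and then to pass to all of $X$ by continuity. So I would begin by fixing $x\in X_0$ and writing $x=\sum_j x_j$ with $x_j=P^{\bF}_j(x)\in F_j$ (a finite sum). Then $\psi(x)=\sum_j\psi_j(x_j)$ lies in $\coo\big(\oplus_{j=1}^\infty\Fb_j\big)\subseteq Z$, and by the remark preceding the proposition $\psi(x)|_\Gamma=x$, i.e.\ $\pi(\psi(x))=x$. Since the restriction map $\pi\colon Z\to Y$, $z\mapsto z|_\Gamma$, is norm-decreasing (it merely forgets coordinates, both spaces carrying the sup norm), this already gives the lower estimate $\|x\|=\|\pi(\psi(x))\|\le\|\psi(x)\|$.

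The substantive point is the upper estimate, i.e.\ bounding $\|\psi(x)\|_{\ell_\infty(\Gammab)}=\sup_{\gammab\in\Gammab}|e^*_{\gammab}(\psi(x))|$, and I would split the supremum according to whether $\gammab\in\Gamma$ or $\gammab\in\Lambda$. If $\gammab\in\Gamma$, then $e^*_{\gammab}(\psi(x))=\psi(x)(\gammab)=x(\gammab)$, so $|e^*_{\gammab}(\psi(x))|\le\|x\|$. If $\gammab\in\Lambda$, say $\gammab\in\Theta_n$, the claim is that $d^*_{\gammab}(\psi(x))=0$; granting this, $e^*_{\gammab}(\psi(x))=c^*_{\gammab}(\psi(x))$, and the hypothesis gives $|e^*_{\gammab}(\psi(x))|=|c^*_{\gammab}(\psi(x))|\le c_X\|x\|$. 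Combining the two cases yields $\|\psi(x)\|\le\max(1,c_X)\|x\|$. To prove the claim I would argue as follows: since $\gammab\in\Deltab_n$, the functional $d^*_{\gammab}$ sits in the $n$-th component $\Fb^*_n$ of the FDD dual to $\bFb$, hence $\langle d^*_{\gammab},z\rangle=0$ for every $z\in\Fb_j$ with $j\ne n$; applying this to $z=\psi_j(x_j)$ and summing gives $d^*_{\gammab}(\psi(x))=d^*_{\gammab}(\psi_n(x_n))$. Now $\psi_n(x_n)=\Jb_n(x_n|_{\Delta_n})$, and $d^*_{\gammab}=e^*_{\gammab}-c^*_{\gammab}$ is, as an element of $\ell_1(\Gammab)$, supported in $\Gammab_n$, so $\Rb_n\circ\Jb_n=\Id$ yields $d^*_{\gammab}(\psi_n(x_n))=\langle d^*_{\gammab},\Rb_n\Jb_n(x_n|_{\Delta_n})\rangle=\langle d^*_{\gammab},x_n|_{\Delta_n}\rangle$. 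This last pairing vanishes: $x_n|_{\Delta_n}$ is supported on $\Delta_n$, whereas $\gammab\in\Theta_n\subseteq\Gammab_n\setminus\Delta_n$ and $\supp(c^*_{\gammab})\subseteq\Gammab_{n-1}$ is disjoint from $\Delta_n$, so $\langle e^*_{\gammab},x_n|_{\Delta_n}\rangle=0$ and $\langle c^*_{\gammab},x_n|_{\Delta_n}\rangle=0$. This proves the claim.

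Finally, since $X_0$ is dense in $X$, $Z$ is complete, and we have shown $\|y\|\le\|\psi(y)\|\le\max(1,c_X)\|y\|$ for all $y\in X_0$, the map $\psi|_{X_0}$ extends uniquely to an isomorphic embedding of $X$ into $Z$ with the same two-sided estimate; a short coordinatewise check shows this extension agrees with $\psi$ as defined on $\big(\oplus_{j=1}^\infty F_j\big)_{\ell_\infty}$ in the preceding remark. For arbitrary $x\in X$, choosing $x^{(k)}\to x$ in $X_0$ and using the continuity of $\pi$ gives $\pi(\psi(x))=\lim_k\pi(\psi(x^{(k)}))=\lim_k x^{(k)}=x$, so $\pi\circ\psi=\Id_X$; in particular $\pi|_{\psi(X)}$ is the inverse isomorphism of $\psi|_X$. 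The one genuinely non-formal step is the identity $d^*_{\gammab}(\psi(x))=0$ for $\gammab\in\Lambda$ --- this is where the hypothesis $|c^*_{\gammab}(\psi(x))|\le c_X\|x\|$ is used --- and I expect that to be the crux; the rest is bookkeeping with the restriction/extension operators $\Rb_n,\Jb_n$ together with a standard density argument.
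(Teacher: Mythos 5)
Your proof is correct and follows essentially the same line as the paper's: split $\Gammab$ into $\Gamma$ and $\Lambda$, observe that on $\Gamma$-coordinates $\psi(x)$ agrees with $x$ (giving the lower bound and $\pi\circ\psi=\Id$), use the hypothesis on $c^*_{\gammab}$ for $\gammab\in\Lambda$ (giving the upper bound), and invoke density of $X\cap\coo(\oplus F_j)$ to get $\psi(X)\subset Z$. The only organizational difference is that you work on the dense subspace first and extend by continuity, and you explicitly verify $d^*_{\gammab}(\psi(x))=0$ for $\gammab\in\Lambda$ (so $e^*_{\gammab}(\psi(x))=c^*_{\gammab}(\psi(x))$), a step the paper leaves implicit in the preceding remark; note only that this identity is purely structural and does not itself use the bound on $c^*_{\gammab}$, contrary to the phrasing of your closing sentence.
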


  \begin{rem}\label{R:5.3} In \cite[Lemma 3.1]{OS} and \cite{JRZ} it was shown that every separable Banach space
  $X$ can be embedded into  a  Banach space  $W$ with FDD $\bE=(E_j)$,
   so that $X\cap \coo(\oplus_{j=1}^\infty E_j)$ is dense in $X$.  Moreover, $(E_j)$ can be chosen to be shrinking if $X^*$ is separable. Using the construction of Theorem A, we can therefore  embed $W$ into a Bourgain-Delbaen space $Y$ which has an FDD $\bF=(F_j)$
   so that $E_j$ embeds into $F_{m_j}$ for  some increasing sequence $(m_j)$.
   It follows therefore that the image of $X$ under the embedding into $Y$ has the property
   needed in Proposition \ref{P:5.2}.
  \end{rem}

  \begin{proof}[Proof of Proposition \ref{P:5.2}]
   For $x\kin X$ and $\gammab\kin\Gammab$ we first estimate $e^*_{\gammab}(\psi(x))$.
 If $\gamma\kin \Gamma$ then
  $e^*_\gamma(\psi(x))\! =\!e^*_\gamma(x)$, and thus it follows that $\|\psi(x)\|\!\ge\! \|x\|_{\ell_\infty(\Gamma)}=\|x\|$ for all $x\kin X$ and $\pi(\psi(x))=x$. If $\gammab\kin\Lambda$ it follows that
  $$|e^*_{\gammab}(\psi(x))|=|c^*_{\gammab}(\psi(x))|\le c_X\|x\|$$
   and therefore the restriction of  $\psi$ to $X$ is a bounded operator, still denoted by $\psi$, from $X$ to $\ell_\infty(\Gammab)$,
   and  $\|\psi\|\le \max(c_X, 1) $.

   We still need to show that the image of $X$ under $\psi$ is contained in $Z$.
However $\psi(X\cap\coo(\oplus_{j=1}^\infty F_j))\subset Z$ since
$\psi(X\cap F_j)\subset\psi(F_j)\subset \Fb_j\subset Z$ for all
$j\in\N$.  Thus the image of $\psi$ on a dense subspace of $X$ is
contained in $Z$, and hence $\psi(X)\subset Z$.
  \end{proof}

     \begin{thm}\label{T:6.1}
  Let $Y$ be the Bourgain-Delbaen space associated to  a sequence of sets $(\Delta_n)$ and let  $\bF=(F_n)$ be the FDD of  $Y$.
   Let $X$ be a subspace of $Y$ and assume that $\coo(\oplus_{j=1}^\infty F_j)\cap X$ is dense in $X$ and
 let  $V$ be a space with a 1-unconditional,  and normalized basis
  $(v_n)$.

  Then there is an augmentation $(\Theta_n)$ of $(\Delta_n)$ with an associated
  space $Z$ and with   FDD  $\bFb=(\Fb_n)$ so that the following hold.
  \begin{enumerate}

  \item[a)] $X$ embeds isometrically into $Z$ via $\psi$.
\item[b)]
 If $\bf{F}$ and $(v_i)$ are shrinking, then $\bFb$ is also shrinking  and, thus,  $Z^*$ is
    isomorphic to $\ell_1$. Furthermore,
  if $(z_n)$ is a  normalized block basis in $Z$, with the property
  that
  $$\delta_0=\inf_{n\in N} \dist(z_n,\psi(X))>0$$
  then $(z_n)$ has a subsequence $(z'_n)$ which dominates $(v_{k_n})$
   where $k_n\!=\!\max\supp_{\bFb}(z'_{n})\!+\!1$, for $n\in\N$.
 \item[c)] If $X$ has an FDD $\bE=(E_n)$, with the property that $E_n\subset F_n$, for $n\in\N$,
  then in this case  we can choose    $(\Theta_n)$ so that
 $$c^*_{\gammab}(\psi(x))=0,
 \text{ whenever, $\gammab\in\Lambda=\bigcup_{j=1}^\infty \Theta_j$ and $x\kin X$}.$$
  Moreover
    every normalized block sequence $(z_n)$ satisfying
    \begin{align}\label{E:6.1.1a}
   \max\supp_{\bFb}(z_{n})+n+2< \min\supp_{\bFb}(z_{n+1}) \text{ and }
       \delta_0=\inf_{n\in N} \dist(z_n,\psi(X))>0,
   \end{align}
   dominates $(v_{k_n})$, where $k_n=\max\supp_{\bFb}(z_{n})+1$.
     \end{enumerate} \end{thm}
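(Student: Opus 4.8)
The plan is to build the augmentation $(\Theta_n)$ so that it records, in a Tsirelson-like way, the $V$-norms of block sequences that stay away from $\psi(X)$. The model is exactly the construction of Theorem A applied to $V$ (via $BD_V$) grafted onto $(\Delta_n)$: for each finite admissible block sequence of the already-built $\bFb$-FDD, with a bounded gap condition, we add a type-$1$ element $\gammab$ whose associated functional $c^*_{\gammab}$ is, roughly, $c\sum_i d^*_{\xi_i}$, with $\xib_i$ recording the pieces of a $\psi(X)$-far block. The weight $\beta=c$ will be $\le\theta<1/2$, so condition \eqref{E:1.1a} holds and Corollary~\ref{C:3.7} governs shrinkingness.

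First I would set up the augmentation recursively so that $\Lambda=\bigcup\Theta_j$ carries a copy of the relevant $\Gamma_{\alpha,c}$-type structure for $V$; here one uses that $(v_n)$ is $1$-unconditional and normalized, so by Corollary~\ref{C:alteredproof} / Remark~\ref{R:4.5} one has a norming set $D_V\subset B_{V^*}$ of the Tsirelson form, and the cuts-family $\cC$ built from both $\Delta$ and $\Theta$ remains compact provided the original $\{\cuts(\gamma):\gamma\in\Gamma\}$ is compact and the $V$-part has compact cuts-family (true when $(v_n)$ is shrinking, e.g.\ for $T_{c,\alpha}$, since then $D_V$ is generated along a regular family). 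Then part~b) (first assertion) and part~c) (first assertion) follow from Corollary~\ref{C:3.7}: $\bFb$ is shrinking, hence $Z^*\simeq\ell_1$ with the decomposition constant $\Mb\le 2$ as in Remark~\ref{R:1.4a}. The isometric embedding in a) is Proposition~\ref{P:5.2}, once we check $|c^*_{\gammab}(\psi(x))|\le c_X\|x\|$ for $\gammab\in\Lambda$ and $x\in X$; in the setting of c) we will in fact arrange $c^*_{\gammab}(\psi(x))=0$, which gives $c_X=0$ and hence an isometry, while in the general b) case one estimates $c^*_{\gammab}(\psi(x))$ using that $c^*_{\gammab}$ is a small ($\le c$) multiple of a sum of $d^*$'s with gaps, against which $\psi(x)$ — being in the image of $X$ — has controlled mass.

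For the domination statements, the key point is the standard Tsirelson-norm lower estimate. Given a normalized block sequence $(z_n)$ of $\bFb$ with $\delta_0=\inf_n\dist(z_n,\psi(X))>0$ (and, in case c), the gap $\max\supp_{\bFb}(z_n)+n+2<\min\supp_{\bFb}(z_{n+1})$), pass to a subsequence $(z'_n)$ and, for each $n$, pick a norming functional — here we use that in $Z^*$ the functionals $e^*_{\gammab}$, $\gammab\in\Gammab$, form a norming set, and more precisely for a finite admissible collection of the $z'_n$'s the corresponding $\gammab\in\Theta$ was placed into the augmentation with $c^*_{\gammab}$ testing exactly $\sum_i \pm (\text{coefficient}) z'_i$ up to the $d^*$-corrections and up to the error coming from the fact that $z'_i$ is only \emph{approximately} a $\psi(X)$-far normalized block with support aligned to the cut points. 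The bounded-gap hypothesis in c) ensures there is room in the index set $\N$ (recall ranks of $\Gamma$-elements exhaust $\N$ with finitely many per level, Proposition~\ref{P:partialorder}a) to have reserved a $\gammab$ for every admissible choice; in b) one only gets this after passing to a subsequence, which is why only a subsequence is claimed there. Iterating the defining Tsirelson inequality $\|\cdot\|\ge \sup\sum_i c\|A_i\cdot\|$ down the tree of admissible partitions, and absorbing the $d^*$-errors and the $\delta_0$ into the constant, yields $\|\sum a_i z'_i\|_Z\gtrsim \|\sum a_i v_{k_i}\|_V$ with $k_i=\max\supp_{\bFb}(z'_i)+1$ (the shift by $1$ being the usual subsequential-estimate bookkeeping, cf.\ Definition~\ref{D:OSZ1} and Theorem~\ref{P:bimonotone}).

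The main obstacle — and where the real work lies — is the \emph{coherence of the augmentation}: one must define $(\Theta_n)$ \emph{once and for all} by a legitimate recursion (each $\Theta_{n+1}$ finite, depending only on $\Gammab_n$), yet rich enough that for \emph{every} eventual $\psi(X)$-far normalized block sequence with the prescribed gaps there is, at each admissible stage, an element $\gammab\in\Theta$ whose $c^*_{\gammab}$ genuinely sees that block up to a controlled error. This forces a careful bookkeeping: we cannot test against the actual (infinitely many possible) blocks, only against a $\vp_n$-net of the finitely many $b^*\in B_{\ell_1(\Gammab_n)}$ of the right support structure, exactly as the $R_i$ and $\tilde A_i^*$ nets were used in Section~\ref{S:4}. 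Showing that the net is fine enough that the accumulated errors (which enter multiplicatively down the admissibility tree, as in the proof of $c)\To d)$ in Theorem~\ref{P:bimonotone} with the $\sum\vp_i<\vp/8$ bound) stay summable, and that $\dist(z_n,\psi(X))\ge\delta_0$ survives the perturbation, is the crux; the gap condition $+n+2$ in \eqref{E:6.1.1a} is precisely what buys the room to do this without passing to a subsequence. Everything else — shrinkingness via Corollary~\ref{C:3.7}, $Z^*\simeq\ell_1$, the isometric embedding via Proposition~\ref{P:5.2}, and the $V$-lower estimate via the Tsirelson recursion — is then routine given the machinery already assembled in Sections~\ref{S:2}--\ref{S:4}.
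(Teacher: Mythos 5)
Your overall strategy matches the paper's: graft a $BD_V$-type coding onto $(\Delta_n)$ via an augmentation $(\Theta_n)$ built from $\varepsilon_n$-nets of the relevant $b^*$'s, control shrinkingness through Corollary~\ref{C:3.7}, embed via Proposition~\ref{P:5.2}, and obtain lower $V$-estimates by exhibiting, for each $\psi(X)$-far block, a witness $\gammab\in\Lambda$. Two of your steps, however, diverge from anything that would actually close the argument.

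First, in case b) you propose to \emph{estimate} $|c^*_\gammab(\psi(x))|$ from the shape of $c^*_\gammab$ (``a small multiple of a sum of $d^*$'s with gaps, against which $\psi(x)$ has controlled mass''). That estimate has no reason to hold when $X$ is an arbitrary subspace with no FDD compatibility, and the paper does not attempt it. Instead the definitions of $\Theta^{(1,1)}_{n+1}$ and $\Theta^{(1,2)}_{n+1}$ include the bound $|c^*_\gammab(\psi(x))|\le\|x\|$ for all $x\in X$ as a \emph{membership condition}: the augmentation is filtered so that the hypothesis of Proposition~\ref{P:5.2} is imposed by fiat, never derived. The resulting subtlety, which your sketch does not address, is that one must then check in Lemma~\ref{L:6.3} that the recursively built $\gammab_j$'s actually pass this filter; the paper does so by carrying along, through the induction, the quantitative bound $|e^*_{\gammab_j}(\psi(x))|\le\sum_{n\le N_j}\delta_n\|x\|\le\|x\|$.

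Second, ``iterating the defining Tsirelson inequality $\|\cdot\|\ge\sup\sum_i c\|A_ix\|$'' is not available: $V$ here is an \emph{arbitrary} space with a normalized $1$-unconditional basis, not a Tsirelson space, and $D^V$ from Lemma~\ref{L:4.1} is not in general generated by the Tsirelson recursion. The paper's mechanism is Lemma~\ref{L:6.3}: given $w^*=\sum_{j=1}^N\beta_j v^*_{q_j}\in D^V$ and a $\psi(X)$-far block family $(z^*_n)$ coming from the nets $B_{(p_n,q_n)}$, an induction on the length of the special $c$-decomposition of $w^*$ produces one $\gammab\in\Lambda$ with $P^{\bFb^*}_{(p_n,q_n)}(e^*_\gammab)=c\beta_n z^*_n$ for all $n\le N$. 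Then $e^*_\gammab\bigl(\sum_j\alpha_j z_j\bigr)=c\sum_j\alpha_j\beta_j z^*_j(z_j)$, and the $(1-\varepsilon)$-norming property of $D^V$ delivers the domination directly, with no norm recursion. Your observation that the gap condition $q_n+n<p_{n+1}$ is what lets the ranks of the intermediate $\gammab_j$'s fit in between blocks, and that the net fineness $\varepsilon_n/(2M+4)$ must be chosen before the blocks are known, is accurate and is exactly the bookkeeping the paper carries.
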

     \begin{rem} In case (c)  we allow some $E_n$ to be the nullspace $\{0\}$.
      As noted in the introduction, this will be convenient. In the case of Theorem A, we actually
       had $E_j\subset F_{m_j}$, but we choose to simplify the notation  in the arguments below.
     \end{rem}
   \begin{proof}[Proof of Theorem \ref{T:6.1}]
      The construction of $(\Theta_n)$ will differ slightly  depending on whether $X$ has an FDD or not.

      We   use the construction   of Section \ref{S:4} for the space
      $V$ with $c\leq\sfrac{1}{16}$
using as an FDD for $V$ the basis $(v_i)_{i=1}^\infty$ and
$A^*_j=\{\pm v^*_j\}$ for all $j\in\N$. We write $D^V,
\Delta_n^V,\Gamma_n^V,\ldots$ to distinguish these sets from
$\Delta_n,\Gamma_n,\ldots$ which came from the
 construction of $Y$.
Thus we obtain a $\cL_\infty$ space $Y^V$ and a
$\frac1{1-\vp}$-embedding (see Proposition \ref{P:map-phi}) $\phi^V
:V\to Y^V$.The numbers
 $\vp<c$ and $(\vp_n)\subset(0,c)$ satisfy, as in Section \ref{S:4}, the  condition
 \eqref{eq:4.1}.

Now $D^V=D$ is as defined in the unconditional case of  Lemma
\ref{L:4.1} for the space $V$.
 We also note  that in the case that $V$ is the Tsirelson space, $T_{c,\alpha}$ with $\alpha<\omega_1$ and
 $c\le \sfrac1{16}$  we could  use  $D^V$ and  $\Gamma^V=\Gamma_{c,\alpha}$ as defined in
 Remark \ref{R:4.5}.

 We define by induction for all $n\in\N$  the sets  $\Theta_n$  and  the sets $\Theta_n^{(0)}$ and  $\Theta_n^{(1)}$, if $n\ge 2$, satisfying \eqref{E:6.2.1} and \eqref{E:6.2.2}. Moreover,
 we  also define  a map $\Theta_n\to\Gamma^V$, $\gammab\mapsto \gammab^V$ so that
 \begin{align}\label{E:6.1.1}
 &\cuts(\gammab) \text{ is a spread of $\{\min\supp_{V^*}(x^*_1),\min\supp_{V^*}(x^*_2),\ldots,\min\supp_{V^*}(x^*_\ell)\}$,}\\
 &\text{where $\gammab^V=(x^*_1,x^*_2,\ldots, x^*_\ell)\kin\Gamma^V$,
 for $\gammab\in\Theta_n$, and $\max\supp_{V^*}(\gammab^V)\leq n$.}\notag
 \end{align}
 The set of free variables will be a singleton,  and $\alpha$  will always  be chosen to be $1$ in (\ref{E:6.2.2}), so we suppress the free variable and $\alpha$, in the definition
 of the elements of $\Theta_n$.

 To start the recursive construction we put $\Theta_1=\emptyset$, and assuming $\Theta_j^{(0)}$ and $\Theta_j^{(1)}$
 have been chosen for all $j\leq n$, we proceed as follows.
  $\Lambda_j$, and $\Gammab_j$, $j\le n$, $\Fb^*_j$ and $P^{\bFb^*}_{(k,j]}$, $0\le k<j\le n$, are given as  in Definition \ref{D:augmentation}.
  Since $Y$ is a subspace of $\ell_\infty(\Gamma)$,  and since $\Gamma_n\subset \Gammab_n$,
      $e^*_\gammab$, $ \gammab\in \Gammab_n$, is a well defined functional on $Y$ (and thus on $X$).
The map $\psi:X\rightarrow\prod_{j=1}^\infty \Fb_j$ will be
defined ultimately as in (\ref{E:5.2}).  At this point for $x\in X$,
$\psi(x)|_{\Gammab_n}$ is defined and so
$e^*_\gammab(\psi(x))=c^*_\gammab(\psi(x))$ is defined for
$\gammab\in\Gammab_n$.
  Thus we can choose for $0\leq k<n$, finite  sets
 $$B_{(k,n]}\subset
 \begin{cases}  \{ b^*\in B_{\ell_1(\Gammab_n\setminus\Gammab_k)}: P^{\bFb*}_{(k,n]}(b^*)|_{\psi(X)}\equiv 0 \} &\text{, assuming $X$ has an FDD}\\
                  B_{\ell_1(\Gammab_n\setminus\Gammab_k)}  &\text{, no assumptions on }X \end{cases} $$
which are  symmetric and $\vp_{n+1}/(2M+4)$ dense in their
respective supersets. Then we put
\begin{align*}
\Theta_{n+1}^{(0)}=&\,\Theta_{n+1}^{(0,1)}\cup \Theta_{n+1}^{(0,2)} \text{ with }\\
\Theta_{n+1}^{(0,1)}&=\{(n+1,rc,b^*): ( rv_{n+1}^*)\in\Gamma^V
\text{ and }
 b^*\kin B_{(0,n]}  \}\\
 \Theta_{n+1}^{(0,2)}&=\left\{(n+1,r,e^*_\etab): \begin{matrix} &\etab\in\Lambda_n, \exists
x^*\kin D^V\text{ so that }  (rx^*)\in\Gamma^V\text{ with }|\supp_{V^*}(x^*)|>1 \\
   & \text{ and }\etab^V\text{ is the special c-decomposition of }x^*  \end{matrix}\right\},
 \end{align*}
 and
 \begin{align*}
 \Theta_{n+1}^{(1)}=& \Theta_{n+1}^{(1,1)}\cup \Theta_{n+1}^{(1,2)} \text{ with }\\
 \Theta_{n+1}^{(1,1)}&=\left\{\gammab=(n+1,k,\xib,rc, b^*):  \begin{matrix}    k<n,\, \xib\kin \Theta_k,   b^*\kin B_{(k,n]}, (\xib^V,r v^*_{n+1})\in \Gamma_{n+1}^V,\\
        \text{ with } |c^*_\gammab(\psi(x))|\le \|x\|\text{ for all }x\in X  \end{matrix}  \right\}    \\
 \Theta_{n+1}^{(1,2)}&=\left\{\gammab=(n+1,k,\xib,r, e^*_{\etab}):    \begin{matrix} k\!<\!n,\, \xib\kin \Theta_k,    \eta\kin\Lambda_n, \exists x^*\kin D^V\text{ with }|\supp(x^*)|\!>\!1, \text{ so}\\
   \text{that } (\xib^V,rx^*)\kin\Gamma_{n+1}^V,\text{ and } \etab^V\text{ is the special $c$-decom-}\\
   \text{position of $x^*$ with } |c^*_\gammab(\psi(x))|\le \|x\|\text{ for all }x\in X      \end{matrix}\right\}.
 \end{align*}
Note that for $(n+1,r,e^*_{\etab})\in\Theta^{(0,2)}_{n+1}$ or
$(n+1,k,\xib,r,e_\etab^*)\in\Theta^{(1,2)}_{n+1}$ we have that
$r\leq c$ since $|\supp(x^*)|>1$.
 We define for $\gammab\in\Lambda_n$, $n\ge 2$,
 $$\gammab^V=\begin{cases}(rv^*_{n+1}) &\text{\!\!if $\gammab=(n+1,rc,b^*)\in \Theta_{n+1}^{(0,1)}$,}\\
                                           (rx^*) &\text{\!\!if  $\gammab=(n+1,r,e^*_\etab)\kin \Theta_{n+1}^{(0,2)}$,}
                                                                                                                  \text{ where $\etab^V$ is the  special c-decomposition  of $x^*$,}\\
                                                     (\xib^V,rv^*_{n+1})     &\text{\!\!if  $\gammab=(n+1,k,\xib,rc, b^*)\in \Theta_{n+1}^{(1,1)}$,}\\
                                                   (\xib^V,rx^*)   &\text{\!\!if  $\gammab\!=\!(n+1,k,\xib,r, e^*_\eta)\kin\Theta_{n+1}^{(1,1)}$,}
                                                                        \text{where $\etab^V$ is the special c-decomposition of $x^*$.}
                         \end{cases}$$

        Then  condition (\ref{E:6.1.1}) follows immediately for the elements of $\Theta_{n+1}^{(0)}$, while an easy induction argument proves it also for the elements of
       $\Theta_{n+1}^{(1)}$.  It is worth pointing out that $\{\gammab^V:\gammab\in\Lambda\}$ is a proper subset of $\Gamma^V$, but nevertheless is sufficiently large for our purposes.

       Proposition \ref{P:1.4}   yields that  $(\Deltab_n)$ admits an associated Bourgain-Delbaen space $Z$ with FDD $\bFb=(\Fb_j)$
       whose decomposition\ constant $\Mb$ is  not larger than  $\max(M, 1/(1-2c))\le \max (M,2)$, where $M$ is the decomposition constant of $(F_j)$.
       If $(F_j)$ and
       $(v_n)$ are both shrinking in $V$, and thus, the optimal $c$-decompositions of elements of $B_{V^*}$  are admissible with respect to some
        compact subset of $[\N]^{<\omega}$, our
                  condition (\ref{E:6.1.1}) together with Theorem \ref{P:bimonotone} and Corollary \ref{C:3.7}  yield that the FDD $\bFb=(\bFb)$  is shrinking in
        $Z$.  The definition of  $\Theta_n^{(1)}$  together with Proposition \ref{P:5.2} imply that $\psi$ isomorphically embeds $X$ into $Z$.

        To verify parts (b) and (c) of our Theorem and will need the following
\renewcommand{\qedsymbol}{}
             \end{proof}

         \begin{lem}\label{L:6.3}
       Let $(z^*_j)$ be a   block basis in $Z^*$ with respect to $\bFb^*$ and  $(\delta_j)\subset [0,1]$ with $\sum_{j\in \N} \delta_j\le 1$.
       Assume that $|z^*_j(\psi(x))|\le \delta_j$ for all $j\kin\N$ and $x\in B_X$.
Define for $n\in\N$  $p_n=\min\supp_{\bFb^*}(z^*_n) -1$ and
$q_n=\max\supp_{\bFb^*}(z^*_n)+1$ (thus
$\supp_{\bFb^*}(z^*_n)\subset(p_n,q_n)$) and assume  that
        \begin{align}
        \label{E:6.3.2} &z^*_n=P^{\bFb*}_{(p_n,q_n)}(\tilde z_n^*)\text{
for some $\tilde z_n^*\in B_{(q_n,p_n)}$,}\text{ and }
           q_n+n< p_{n+1}.
         \end{align}
           Then for any  sequence  $(\beta_j)_{j=1}^N$ with $w^*=
\sum_{j=1}^N\beta_j v^*_{q_j} \in D^V$ there exists
        $\gammab\in \Lambda_{N+ q_N }$  so that
       \begin{equation}  \label{E:6.3.0a} P^{\bFb^*}_{(p_n,q_n)}(e^*_\gammab) =c \beta_n z^*_n,  \text{ for all $n\!\le\!N$, and } P^{\bFb^*}\!\!(e^*_\gammab)(\psi(x))= \sum_{n=1}^N c \beta_n z^*_n(\psi(x))
       \text{ if $x\in X$}  . \end{equation}
              \end{lem}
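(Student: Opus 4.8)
The plan is to build $\gammab$ by recursion, inserting one augmentation node for each index $n\le N$ so that the associated functional picks up exactly the term $c\beta_n z^*_n$. Since $w^*=\sum_{j=1}^N\beta_j v^*_{q_j}\in D^V$, it carries a special $c$-decomposition; I will first describe the argument in the generic situation where this decomposition is $(\beta_1 v^*_{q_1},\dots,\beta_N v^*_{q_N})$, and then indicate how the recursive shape of the construction handles the general case. In the generic case I proceed by induction on $N$, the base case $N=1$ being a single node of type $0$ (an element of $\Theta^{(0,1)}_{q_1}$) with free data $b^*=\tilde z^*_1$. For the inductive step, assume we have produced $\xib\in\Lambda_{(N-1)+q_{N-1}}$ with $\rk(\xib)=k$, $\xib^V=(\beta_1 v^*_{q_1},\dots,\beta_{N-1}v^*_{q_{N-1}})$, $P^{\bFb^*}_{(p_n,q_n)}(e^*_\xib)=c\beta_n z^*_n$ for $n<N$, and $e^*_\xib(\psi(x))=\sum_{n<N}c\beta_n z^*_n(\psi(x))$ for $x\in X$. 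Because $\xib^V$ has $V$-support reaching $q_{N-1}$, \eqref{E:6.1.1} gives $q_{N-1}\le k$, while the gap condition $q_{N-1}+(N-1)<p_N$ gives $k\le(N-1)+q_{N-1}<p_N<q_N-1$; hence $k$ is an admissible middle coordinate for an element of $\Theta^{(1,1)}_{q_N}$, and $(\xib^V,\beta_N v^*_{q_N})\in\Gamma^V$ since it is the initial segment of the special $c$-decomposition of $w^*$. I then set $\gammab=(q_N,k,\xib,\beta_N c,\tilde z^*_N)$, with $\tilde z^*_N$ the density-set element furnished by \eqref{E:6.3.2}.

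The crucial point is verifying that $\gammab\in\Theta^{(1,1)}_{q_N}$, i.e. that $|c^*_\gammab(\psi(x))|\le\|x\|$ for all $x\in X$. By Definition \ref{D:1.3}, $c^*_\gammab=e^*_\xib+\beta_N c\,P^{\bFb^*}_{(k,q_N-1]}(\tilde z^*_N)$; since $k<p_N$ and $(p_N,q_N)\subset(k,q_N-1]$, the hypothesis $z^*_N=P^{\bFb^*}_{(p_N,q_N)}(\tilde z^*_N)$ together with the block structure (and, in the FDD case of (c), the annihilation $P^{\bFb^*}_{(k,q_N-1]}(\tilde z^*_N)|_{\psi(X)}\equiv0$) yields
$$c^*_\gammab(\psi(x))=\sum_{n=1}^N c\beta_n z^*_n(\psi(x)).$$
Since $\|w^*\|_{V^*}\le1$ and $(v_n)$ is $1$-unconditional we have $|\beta_n|\le1$; combining this with $|z^*_n(\psi(x))|\le\delta_n\|x\|$ and $\sum_n\delta_n\le1$ gives
$$|c^*_\gammab(\psi(x))|\le c\|x\|\sum_{n=1}^N|\beta_n|\,\delta_n\le c\|x\|\le\|x\|,$$
as required (in case (c) the right-hand side of the first display is $0$).

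Granting $\gammab\in\Theta_{q_N}\subset\Lambda_{q_N}\subset\Lambda_{N+q_N}$, the two displayed conclusions follow from a support computation on $e^*_\gammab=d^*_\gammab+e^*_\xib+\beta_N c\,P^{\bFb^*}_{(k,q_N-1]}(\tilde z^*_N)$. For $n<N$: the term $d^*_\gammab\in\Fb^*_{q_N}$ is supported off $(p_n,q_n)$ (as $q_n\le q_{N-1}<q_N$) and so is $P^{\bFb^*}_{(k,q_N-1]}(\tilde z^*_N)$ (as $q_n\le q_{N-1}\le k$), so $P^{\bFb^*}_{(p_n,q_n)}(e^*_\gammab)=P^{\bFb^*}_{(p_n,q_n)}(e^*_\xib)=c\beta_n z^*_n$ by induction; for $n=N$: $e^*_\xib\in\oplus_{j\le k}\Fb^*_j$ with $k<p_N$, $d^*_\gammab$ sits at coordinate $q_N$, and $(p_N,q_N)\subset(k,q_N-1]$, so $P^{\bFb^*}_{(p_N,q_N)}(e^*_\gammab)=\beta_N c\,P^{\bFb^*}_{(p_N,q_N)}(\tilde z^*_N)=c\beta_N z^*_N$. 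Finally, as $\gammab\in\Lambda$ the free coordinate $d^*_\gammab$ vanishes on $\psi(X)$ (this is the identity $e^*_\gammab(\psi(x))=c^*_\gammab(\psi(x))$ used in the proof of Proposition \ref{P:5.2} and in the construction of Theorem \ref{T:6.1}), so the second identity (with $P^{\bFb^*}$ taken over the interval $(p_1,q_N)$) is precisely the computation of $c^*_\gammab(\psi(x))$ above.

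The main obstacle is the bookkeeping just sketched: tracking ranks and $\bFb^*$-supports so the relevant intervals line up — this is exactly where the growing gaps $q_n+n<p_{n+1}$ are essential — and matching the density set $B_{(k,q_N-1]}$ used in the construction with the element $\tilde z^*_N$ supplied by \eqref{E:6.3.2}. The other delicate point is the non-generic case, where the special $c$-decomposition of $w^*$ nests; there one replaces the induction on $N$ by an induction on the number of levels in the derivation of $w^*$ inside $D^V$, attaching to each multi-support piece an auxiliary node $\etab$ of type $\Theta^{(1,2)}$ (whose functional is $e^*_\xib+r\,e^*_{\etab}$, with $P^{\bFb^*}_{(k,\cdot]}$ acting as the identity on $e^*_{\etab}$, cf. \eqref{E:4.9} and Proposition \ref{P:partialorder}), and checking that the product of the scalars along each root-to-leaf branch of the resulting tree equals $c\beta_n$ — which is forced because $\beta_n=w^*(v_{q_n})$ unfolds as exactly that product through the successive special $c$-decompositions. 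The slack in $\Lambda_{N+q_N}$, as opposed to $\Lambda_{q_N}$, absorbs the ranks consumed by these internal nodes.
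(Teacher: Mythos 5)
Your plan — build $\gammab$ recursively by threading one augmentation node per coefficient $\beta_n v^*_{q_n}$, using the $\Theta^{(1,1)}$/$\Theta^{(1,2)}$ architecture and the gap condition $q_n+n<p_{n+1}$ to keep the ranks straight — is the right one and is what the paper does. But the way you have organized it leaves the bulk of the argument unproven.

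The case you fully write out (the ``generic'' case, where the special $c$-decomposition of $w^*$ is exactly $(\beta_1 v^*_{q_1},\dots,\beta_N v^*_{q_N})$) is in fact the degenerate sub-case $\ell=N$ of the paper's induction, where every piece of the special $c$-decomposition is a singleton. In that situation every $\gammab_n$ can be a $\Theta^{(1,1)}$-node, and the identity $(\xib^V,\beta_n v^*_{q_n})\in\Gamma^V$ that you invoke is automatic. In the general situation it is not: if the special $c$-decomposition of $w^*$ is $(r_1w^*_1,\dots,r_\ell w^*_\ell)$ with some $|\supp(w^*_j)|>1$, then the tuples $(\beta_1v^*_{q_1},\dots,\beta_nv^*_{q_n})$ need not lie in $\Gamma^V$ at all, because $\Gamma^V$ consists of initial segments of special $c$-decompositions, not of arbitrary prefixes of the coefficient list. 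So the one-node-per-$n$ construction you run does not go through, and the verification that $\gammab_n$ lands in $\Theta^{(1,1)}_{q_n}$ breaks precisely where it matters. Your ``non-generic'' sketch correctly identifies what must happen — a tree of $\Theta^{(1,2)}$ and $\Theta^{(0,2)}$ nodes attached to the multi-support pieces, with scalars multiplying down each branch and the slack $+N$ in $\Lambda_{N+q_N}$ absorbing the extra ranks — but this is where essentially all the work of the lemma lives, and it is left at the level of a description.

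The paper avoids the generic/non-generic split entirely. It inducts on $N$, and the inductive step does \emph{not} peel off a single node: it first takes the special $c$-decomposition $(r_1w^*_1,\dots,r_\ell w^*_\ell)$ of $w^*$ (with $\ell\ge2$ whenever $N\ge 2$, by Lemma \ref{L:4.1}), rescales each $w^*_j$ so the induction hypothesis applies to it over its own sub-block of $(p_n,q_n)$'s, obtains $\etab_j\in\Lambda_{q_{N_j}+N_j-N_{j-1}}$ for each piece, and then chains $\gammab_1,\dots,\gammab_\ell$ so that a singleton $w^*_j$ gives a $\Theta^{(1,1)}$ node with data $\tilde z^*_{N_j}$ while a multi-support $w^*_j$ gives a $\Theta^{(1,2)}$ node with data $e^*_{\etab_j}$. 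The bound $|c^*_{\gammab_j}(\psi(x))|\le\sum_{n\le N_j}\delta_n\|x\|\le\|x\|$, the rank estimate giving $\gammab_\ell\in\Lambda_{N+q_N}$, and the membership checks in $\Theta^{(1,1)}$/$\Theta^{(1,2)}$ are all carried out on the $\gammab_j$ chain at each inductive step. You should fold your two cases into one such induction; your ``generic'' argument then becomes the $\ell=N$ special instance, and your sketch becomes the actual proof. One additional small point: the weight in a $\Theta^{(1,1)}$ tuple must be nonnegative (it is $rc$ with $r\in(0,1]$), so the node should be $(q_N,k,\xib,|\beta_N|c,\sign(\beta_N)\tilde z^*_N)$, using that the density sets $B_{(k,n]}$ are symmetric; writing $\beta_N c$ for the weight as you did is not in the allowed form.
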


       \begin{proof}
       We prove  our claim by induction on $N\in\N$.  If
$N=1$ then $w^*=\pm  v^*_{q_1}$, and we
        let $\gammab=(q_n,c, \pm\tilde z^*_{1}) \in
\Theta^{(0,1)}_{q_1}$. Then
       $e^*_\gammab=d^*_\gammab\pm c \tilde z^*_1$ and
$P^{\bFb^*}_{(p_1,q_1)} (e^*_\gammab)=\pm c z^*_1$, depending on
whether
       $\beta_1=\pm1$. Since  $d^*_\gammab(\psi(x))=0$ for $x\in X$ we also deduce the second part of \eqref{E:6.3.0a}.

       Assume that our claim holds true for  $N$ and let
       $w^*= \sum_{j=1}^{N+1}\beta_j v^*_{q_j} \in D^V$. Then, by our choice of
$D^V$ (see Lemma \ref{L:4.1}),
       $w^*$ has a special $c$-decomposition $(r_1w^*_1, \ldots, r_\ell
w^*_\ell)$, and we
      write  $w^*_{j}$ as
$w_{j}^*=\sum_{i=N_{j-1}+1}^{N_j}{\beta^{(j)}_i}v_{q_i}^*$
       with $\beta_i^{(j)}=\beta_i/r_j$,  for $j\le \ell$ and $N_{j-1}+1\le
i\le N_j$ and $N_0=0<N_1<\ldots N_\ell=N+1$.
  Since  $\ell\ge 2$, we can apply
       the induction hypothesis to each  $w^*_j$  and obtain $\etab_j\in
\Lambda_{q_{_{_{N_j}}}+ N_j-N_{j-1}}$, $j=1,2\ldots \ell$,
       so that $P^{\bFb^*}_{(p_n,q_n)}(e^*_{\etab_j}) = c\beta_n^{(j)} z^*_n$
if  $N_{j-1}< n\le N_j$.
       Now let
       $$\gammab_1=\begin{cases}(q_{1},cr_1,\sign(\beta_1)\tilde z^*_1)\} &\text{if  $|\supp(w^*_1)|=1$}\\
       (p_{_{N_1+1}},r_1,e^*_{\etab_1})                                                                     &\text{if $|\supp(w^*_1)|>1$}. \end{cases}$$
         Note that,  in the second   case, by assumption
\eqref{E:6.3.2}   $q_{_{N_1}}+N_1<  p_{N_1+1}$ and thus $\etab_1\in
\Lambda_{p_{_{N_1+1}}-1}$.
         Assuming we have chosen $\gammab_{j-1}$, for $2\le j\le\ell$ we
let
         $$\gammab_j=\begin{cases}(q_{_{N_j}},\gammab_{j-1},cr_j,\sign(\beta_{_{N_j}})
\tilde z^*_{_{N_j}})&\text{if  $|\supp(w^*_1)|=1$}\\
             (q_{_{N_j}}+
N_j-N_{j-1}+1,\gammab_{j-1},\rk(\gamma_{j-1}),r_j,e^*_{\etab_j})
&\text{if $|\supp(w^*_1)|>1$.}
\end{cases}$$
Using the induction hypothesis on the $\etab_j$'s, we deduce by
induction on   $j=1,\ldots \ell$ that for $x\in X$
$$e^*_{\gammab_j}(\psi(x))=c^*_{\gammab_j}(\psi(x))\le  \sum_{n=1}^{N_j} |c\beta_n z^*_n(\psi(x))|\le \sum_{n=1}^{N_j} \delta_n\|x\|\le\|x\|,$$
and thus $\gammab_1\in \Theta_{q_1}^{(0,1)}$, if $|\supp(w^*_1)|=1$,
and $\gammab_1\in \Theta_{p_{_{N_1+1}}}^{(0,2)}$, if
$|\supp(w^*_1)|>1$, and
  $\gammab_j\in \Theta_{q_{_{N_j}}}^{(1,1)}$, if  $|\supp(w^*_1)|=1$,  and $\gammab_j\in \Theta_{q_{_{N_j}}+N_j-N_{j-1}+1}^{(1,2)}$, if  $|\supp(w^*_1)|>1$, if $j=2,3\ldots \ell$

        Finally we choose $\gammab=\gammab_\ell$ which in both cases  is an element of $ \Lambda_{q_{N+1}+ N+1}$.
                         It follows for $n\le N$, and $1\le j\le\ell$
such that $N_{j-1}<n\le N_j$ that
                 $$P^{\bFb^*}_{(p_n,q_n)}(e^*_\gammab)=P^{\bFb^*}_{(p_n,q_n)}(e^*_{\gammab_j})=\left.\begin{cases}
c r_j \sign(\beta_j)    z^*_n
&\text{ if $|\supp(w^*_j)|=1$}\\

r_jP^{\bFb^*}_{(p_n,q_n)}(e^*_{\etab_j})      &\text{ if
$|\supp(w^*_j)|>1$}   \end{cases} \right\}   =\beta_n c z^*_n,$$
       which finishes  the verification of the first part of \eqref{E:6.3.0a}, while the second part follows from the induction hypothesis applied to the $\etab_j$'s.       \end{proof}

          \begin{proof}[Continuation of  the Proof of Theorem \ref{T:6.1}] To finish the proof
           we consider a normalized block basis  $(z_n)$ in $Z$, with
           $\delta_0=\inf_n \text{dist}(z_n, \psi(X))>0 $ and the additional property
           \eqref{E:6.1.1a} in the case where $X$ has an FDD.
           Let $p_n=\min\supp_{\bFb}(z_n)-1$ and $q_n=\max\supp_{\bFb}(z_n)+1$.
           It follows  that $q_n+n< p_{n+1}$, for $n\in\N$.
   In this case ($X$ has an FDD) we choose $z^*_n\in\oplus_{j\in(p_n,q_n)} \Fb^*_j$, with
   $\|z^*_n\|\le1$, $      z^*_n(z_n)\ge \frac{\delta_0}{2\Mb}$ and $z_n^*|_{\psi(X)}=0$.

     In the case (b)    we proceed as follows.
     We choose $y^*_n\in Z^*$, $\|y^*_n\|\le1$, so that $y^*_n(z_n)\ge \delta_0$ and $y^*_n|_{\psi(X)}\equiv 0$.
           After passing to subsequence and using the fact that  $(z_k)$ is weakly null, we can assume that $y^*_n$ is $w^*$-converging, and after subtracting
            its $w^*$ limit and possibly replacing $\delta_0$ by a smaller number
            we can assume that $(y^*_n)$ is $w^*$ null.

           After passing again to subsequences, we
           can assume that there exist $p_n$'s and $q_n$'s with
           $$\|P^{\bFb^*}_{(p_n,q_n)}(y_n^*)-y^*_n\|\le \vp_n$$
           and $q_n+n< p_{n+1}$ for $n\in\N$.
           Then we let $z^*_n=P^{\bFb^*}_{(p_n,q_n)}(y_n^*)/(1+\vp)$,  and deduce that
            $\|z^*_n\|\le 1$ and
           $z^*_n(z_n)\ge \delta_0/(1+\vp))=:\delta_0'$.

           In both cases we found  $z^*_n\in \oplus_{p_n+1}^{q_n-1} F^*_j$, with
           $\|z_n^*\|\le 1$, $z^*_n(z_n)\ge \delta_0'$
           and $z^*_n|_{\psi(X)}=0$ in the first case and
           $\|z^*_n|_{\psi(X)}\|\le \vp_n$ in the second.

           By Proposition \ref{P:1.4b} we find
           $b^*_n\in \ell_1(\Gammab_{q_n-1}\setminus \Gammab_{p_n})$, for $n\in\N$
           so that $\|b^*_n\|_{\ell_1}\le \Mb$ and  $z^*_n=P_{(p_n,q_n)}^{\bFb^*}(b^*_n)$.

           Using now the density assumption of $B_{(p_n,q_n)}$  we can choose
           $\tilde b_n^*\in B_{(p_,q_n)}$ with $\|\tilde b_n^*-\frac{1}{\Mb}b^*_n\|\le \vp_{q_n}/(2M+4)\le\vp_{q_n}/2\Mb$, since $\Mb\le M\vee2$.
           So if we let $\tilde z^*_n=P_{(p_n,q_n)}^{\bFb^*}(\tilde b^*_n)$, we deduce
           that $\|z^*_n/\Mb-\tilde z^*_n\|\leq 2\Mb\vp_{q_n}/2\Mb=\vp_{q_n}$ and hence $\tilde z^*_n(z_n)\ge z^*_n(z_n)/\Mb -\|z^*_n/\Mb-\tilde z^*_n\|\ge \delta_0'/\Mb- \vp_n$, for all $n\in\N$.

            Let $n_0\in\N$ be such that  $\delta_0'\ge  2\vp_{n_0}\Mb$.
            It is enough to show that $(z_n)_{n\ge n_0}$
            has lower $(v_{q_n})_{n\ge n_0}$ estimates.
           We can therefore assume without loss of generality that $n_0=1$.
            Let  $(\alpha_j)_{j=1}^N\subset \R$ with $\|\sum_{j=1}^N \alpha_j v_{q_j}\|=1$
            and using Lemma \ref{L:4.1} (in the unconditional case)  we can choose
            $(\beta_ j)_{j=1}^N\subset \R$ with $\sum_{j=1}^N \beta_j v^*_{q_j}\in D^V$
             so that
             $$\sum_{j=1}^N \beta_j v^*_{q_j}\Big(\sum_{j=1}^N \alpha_j v_{q_j})= \sum_{j=1}^N \alpha_j\beta_j\ge ( 1-\vp).$$

           Since $(p_n)$ and $(q_n)$ satisfy the assumptions of Lemma \ref{L:6.3}, we can choose
             $\gammab\in\Lambda$ so that
           $$ e^*_\gammab \Big(\sum_{j=1}^N \alpha_j  z_j\Big)=
            \sum_{j=1}^N  \alpha_j \beta_j P^{\bFb^*}_{(p_j,q_j)} (e^*_\gammab)(z_j)=
                     c\sum_{j=1}^N  \alpha_j   \beta_j z^*_j(z_j)\ge c(1-\vp)\delta_0'/2\Mb,
      $$
           which finishes the proof of (b) and (c) and thus Theorem
           \ref{T:6.1} in full.
                  \end{proof}
 We now prove Theorem B.
    \begin{proof}[Proof of Theorem B]
  Let $X$ and $U$ be totally incomparable  spaces with separable duals.

     By Theorem \ref{thm:FOSZ} $U$ embeds into a space $W$ with an FDD which  satisfies subsequential $T_{c,\alpha}$-upper
       estimates for some $\alpha<\omega_1$ and some $0<c<1$.
      As noted before we can assume that, after possibly replacing $\alpha$ by one of its  powers, we can assume that $c\le \sfrac1{16}$.
We also noted that Proposition 7 in \cite{OSZ2} calculates the
Szlenk index of $T_{\alpha,c}$ to be
$Sz(T_{\alpha,c})=\omega^{\alpha\omega}$. We may thus choose
$\beta>\alpha$ so that $Sz(T_{\beta,c})>Sz(T_{\alpha,c})$.
Furthermore, any infinite dimensional subspace of $T_{\alpha,c}$ has
the same Szlenk index as $T_{\alpha,c}$.  We immediately have that
$T_{\alpha,c}$ and $T_{\beta,c}$ are totally incomparable, that is
no infinite dimensional subspace of $T_{\alpha,c}$ is isomorphic to
a subspace of $T_{\beta,c}$.  This idea can be refined further to
give that no normalized block sequence in $T_{\alpha,c}$ dominates a
normalized block sequence in $T_{\beta,c}$.

  Using Theorem A and    Remark \ref{R:5.3} we can embed $X$ into a Bourgain-Delbaen space $Y$ with shrinking FDD
  $\bF=(F_j)$ so that $X\cap\coo(\oplus_{j=1}^\infty F_j)$ is dense in $X$.
We apply now Theorem \ref{T:6.1} to $Y$,  with  $(v_j)$ being the
unit vector basis of $T_{c,\beta}$, to obtain a Bourgain-Delbaen
space $Z$, and an embedding $\psi$ of $X$ into $Z$, so that every
normalized  block sequence,  which has a positive distance to
$\psi(X)$, has a subsequence  $(z_i)$ which dominates some
subsequence of $(v_j)$.  If $(z_i)$ is equivalent to a basic
sequence in $U$, then $(z_i)$ is dominated by a subsequence of the
unit vector basis for  $T_{c,\alpha}$.  Thus a subsequence of the
unit vector basis for $T_{\alpha,c}$ must dominate a subsequence of
$(v_i)$ (the unit vector basis for $T_{\beta,c}$), which is a
contradiction.  Thus no normalized block sequence in $Z$,  which has
a positive distance to $\psi(X)$, is equivalent to a subsequence in
$U$.

Now  any normalized sequence in $Z$ has a subsequence which is
equivalent to a sequence in $X$ or has a subsequence which has a
positive distance to $\psi(X)$.  In both cases it follows that
 the sequence is not equivalent to a sequence in $U$.  Theorem B
 follows.
  \end{proof}

\begin{proof}[Proof of Theorem C]
Assume that $X$ is reflexive. Using Theorem \ref{thm:OSZ2} we can assume that $X$ has an FDD $(E_i)$ which satisfies
 for some $\alpha<\omega_1$ both  subsequential   $T_{\alpha,c}$-upper and subsequential $T^*_{\alpha,c}$-lower
  estimates. As noted before we can assume that $c\le \sfrac1{16}$.

  By Theorem \ref{T:4.9}  we can embed $X$ into a Bourgain-Delbaen space $Y$ with a shrinking FDD $\bF=(F_j)$,
  associated to a sequence of Bourgain-Delbaen sets $(\Delta_n)$,
  via the mapping $\psi$ given in (\ref{E:5.2}).

  Now we apply Theorem \ref{T:6.1} (b) to the unit vector basis $(v_j)$  of $T^*_{\alpha,c}$ and obtain  an augmentation $(\Theta_n)$ of $(\Delta_n)$ generating
   a Bourgain-Delbaen space $Z$ having an FDD $\bFb\!=\!(\Fb_j)$, so that every  normalized block basis $(z_n)$ in $Z$
    has a subsequence which is either equivalent to a block sequence in $X$, or which dominates   a subsequence of
    $(v_j)$.  Moreover, the later case holds for all normalized
    block bases of $(z_n)$.
    In both cases it follows that this subsequence is boundedly complete, and since it is  shrinking it follows that
    it must span a reflexive space.
\end{proof}

Similarly we can show the following result, whose proof we ommit.
\begin{thm}\label{T:6.4} Let $X$ be a Banach space with separable dual and let $(u_j)$ be a shrinking basic sequence,
none of whose subsequences is equivalent to a sequence in $X$. Then
$X$ embeds into a Bourgain-Delbaen space $Z$ whose dual is
isomorphic to $\ell_1$, and which does not contain any sequence
which is equivalent to any subsequence of $(u_j)$.
\end{thm}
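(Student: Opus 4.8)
The plan is to follow the blueprint of the proof of Theorem~B, replacing the incomparable space $U$ there by $U:=[u_j]$ and replacing total incomparability by the present (formally weaker, but here sufficient) hypothesis on the subsequences of $(u_j)$. First, since $(u_j)$ is shrinking, $U^*=[u_j^*]$ is separable, so by Theorem~\ref{thm:FOSZ} the space $U$ embeds into a space $W$ carrying an FDD that satisfies subsequential $T_{c,\alpha}$-upper estimates for some $\alpha<\omega_1$ and $0<c<1$; replacing $\alpha$ by a suitable power and $c$ by the corresponding power (using that $T_{\alpha,c}$ and $T_{\alpha^n,c^n}$ are naturally isomorphic) we may assume $c\le\sfrac1{16}$. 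Fix $\beta>\alpha$. As recalled in the proof of Theorem~B, $S_z(T_{c,\beta})>S_z(T_{c,\alpha})$, hence $T_{c,\alpha}$ and $T_{c,\beta}$ are totally incomparable and, more precisely, no normalized block sequence of $T_{c,\alpha}$ dominates a normalized block sequence of $T_{c,\beta}$.

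Next, using Theorem~A together with Remark~\ref{R:5.3}, embed $X$ into a Bourgain-Delbaen space $Y$ with a shrinking FDD $\bF=(F_j)$ for which $X\cap\coo(\oplus_{j=1}^\infty F_j)$ is dense in $X$. Then apply Theorem~\ref{T:6.1}(b) with $(v_j)$ the unit vector basis of $V:=T_{c,\beta}$, which is normalized, $1$-unconditional and shrinking. This produces an augmentation $(\Theta_n)$ of the defining sequence $(\Delta_n)$ of $Y$, an associated Bourgain-Delbaen space $Z$ with shrinking FDD $\bFb$ — whence $Z^*$ is isomorphic to $\ell_1$ — and an embedding $\psi\colon X\to Z$ such that every normalized block sequence $(z_n)$ of $\bFb$ with $\inf_n\dist(z_n,\psi(X))>0$ has a subsequence dominating a subsequence of $(v_j)$.

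It remains to show that $Z$ contains no sequence equivalent to a subsequence of $(u_j)$. As in the proof of Theorem~B, since $Z$ has separable dual, every normalized sequence in $Z$ has a subsequence that is either equivalent to a sequence in $\psi(X)$ (hence to a sequence in $X$) or, after a standard perturbation, a normalized block basis of $\bFb$ with positive distance to $\psi(X)$. Suppose some normalized $(z_n)$ in $Z$ were equivalent to a subsequence of $(u_j)$. In the first case the corresponding subsequence of $(u_j)$ is equivalent to a sequence in $X$, contradicting the hypothesis. In the second case Theorem~\ref{T:6.1}(b) provides a subsequence $(z'_n)$ dominating a subsequence of $(v_j)$; since $(z'_n)$ is equivalent to a subsequence of $(u_j)$, which is a normalized shrinking basic sequence inside $U\hookrightarrow W$, a further subsequence of it is equivalent to a normalized block sequence of the FDD of $W$, and hence is dominated by a subsequence of the unit vector basis of $T_{c,\alpha}$. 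Restricting all these nested subsequences to one common index set yields a subsequence of the unit vector basis of $T_{c,\alpha}$ that dominates a subsequence of the unit vector basis of $T_{c,\beta}$, contradicting the choice of $\beta$. Hence no such $(z_n)$ exists, and Theorem~\ref{T:6.4} follows.

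Every step that invokes an already-established result (Theorems~A and~\ref{T:6.1}, the Tsirelson facts) is purely formal. The only point requiring care is the bookkeeping in the last step: one must produce a single subsequence along which a subsequence of $(v_j)$ is dominated by $(z'_n)$, which in turn is equivalent to a subsequence of $(u_j)$ that is dominated by a subsequence of the $T_{c,\alpha}$-basis; and one must observe that the passage from the image of $(u_j)$ in $W$ to an equivalent normalized block sequence of the FDD of $W$ may be carried out on every subsequence of $(u_j)$, since that image is again a seminormalized basic sequence. No obstacle beyond those already handled in Theorems~A, B and~\ref{T:6.1} arises.
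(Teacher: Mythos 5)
The paper does not actually give a proof of this theorem; it only remarks that the argument is ``similar'' to those of Theorems B and C and omits it. Your reconstruction is correct and follows precisely the Theorem B blueprint the authors had in mind: embed $[u_j]$ via Theorem~\ref{thm:FOSZ} into a space with subsequential $T_{c,\alpha}$-upper estimates, run Theorem~\ref{T:6.1}(b) with $V=T_{c,\beta}$ for $\beta>\alpha$, and derive the contradiction by chaining the dominations through a common subsequence, with the weak nullity of the shrinking sequence $(u_j)$ supplying the gliding-hump perturbation needed to reduce to normalized block bases.
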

Using a construction similar to one in the proof of Theorem \ref{T:6.1} we can show the following
 embedding result for spaces with an FDD satisfying subsequential lower estimates.
\begin{thm}\label{T:6.5}
Let $V$ be a Banach space with a  normalized unconditional
 basis  $(v_i)$,  having the following  property.
 \begin{align}\label{E:6.4.1} &\text{There is a constant $C>0$ so that for any two sequences $(p_n)$ and $(q_n)$ in $\N$,}\\
  &\text{with $p_1<q_1<p_2<q_2<\ldots $, $(v_{p_n})$ $C$-dominates $(v_{q_n})$.}\notag
 \end{align}
 Let $X$ be a Banach space with an
FDD $(E_i)$ which satisfies  subsequential $V$-lower
estimates. Then $X$ embeds into a $\cL_\infty$ space $Z$ with an FDD
$(\Fb_i)$ which satisfies skipped subsequential $V'$-lower estimates
where $V'$ is some subsequence of $V$. Furthermore, if $(E_i)$ and
$(v_i)$ are both shrinking, then $(\Fb_i)$ can be chosen to be
shrinking too.
\end{thm}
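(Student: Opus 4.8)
The plan is to construct $Z$ as an augmentation of a Bourgain--Delbaen space containing $X$, following the recipe of Theorem~\ref{T:6.1} verbatim, but tuning the augmenting sets $\Theta_n$ so that they \emph{force lower} estimates (rather than upper estimates) on blocks lying at a positive distance from the image of $X$; the hypothesis \eqref{E:6.4.1} then supplies the index shift needed to convert these into genuine subsequential lower estimates along skipped blocks.

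First I would, using Theorem~\ref{T:4.9} and Remark~\ref{R:5.3}, embed $X$ via $\phi$ into a Bourgain--Delbaen space $Y$ with FDD $\bF=(F_j)$ so that $X\cap\coo(\oplus_j F_j)$ is dense in $X$ and, after relabelling and allowing zero spaces, $E_j\subseteq F_j$ for all $j$, with $\bF$ shrinking whenever $\bE$ is. I would also run the Section~\ref{S:4} construction on $V$ itself, with $c\le\sfrac1{16}$, using $(v_i)$ as the FDD and $A^*_j=\{\pm v^*_j\}$, producing $D^V$, $\Gamma^V$, etc. The augmentation $(\Theta_n)$ is then defined by exactly the recursion in the proof of Theorem~\ref{T:6.1}: one adds type-$0$ elements $(n\!+\!1,rc,b^*)$ and $(n\!+\!1,r,e^*_{\etab})$ and type-$1$ elements $(n\!+\!1,k,\xib,rc,b^*)$, $(n\!+\!1,k,\xib,r,e^*_{\etab})$, where the $V$-label $\gammab^V\in\Gamma^V$ encodes the special $c$-decomposition and the $b^*$'s range over $\vp_{n+1}/(2M+4)$-dense symmetric subsets of $\{b^*:P^{\bFb^*}_{(k,n]}(b^*)|_{\psi(X)}\equiv0\}$. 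By Proposition~\ref{P:1.4} this yields a Bourgain--Delbaen space $Z$ with FDD $\bFb=(\Fb_j)$; since the new $c^*_{\gammab}$ vanish on $\psi(X)$, Proposition~\ref{P:5.2} shows $\psi$ embeds $X$ isometrically into $Z$. If $(E_i)$ and $(v_i)$ are shrinking, then $\bF$ is shrinking and the optimal $c$-decompositions of elements of $D^V$ are admissible with respect to a compact family, so condition \eqref{E:6.1.1} forces $\{\cuts(\gammab):\gammab\in\Gammab\}$ to be compact and Corollary~\ref{C:3.7} gives that $\bFb$ is shrinking in $Z$ (so $Z^*\cong\ell_1$).

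Next I would establish the lower estimate. Let $(z_n)$ be a normalized skipped block sequence of $\bFb$, with gaps as in \eqref{E:6.1.1a}, and put $p_n=\min\supp_{\bFb}(z_n)$, $q_n=\max\supp_{\bFb}(z_n)+1$; skippedness gives $p_n<q_n<p_{n+1}$. After passing to a subsequence and a small perturbation, either $(z_n)$ is a block sequence of $\psi(X)$ or $\inf_n\dist(z_n,\psi(X))>0$. In the first case write $z_n=\psi(x_n)$ with $x_n$ a block of $\bE$ and $a_n=\min\supp_{\bE}(x_n)$; the subsequential $V$-lower estimates of $\bE$ in $X$ give $\|\sum c_nx_n\|\ge C_X^{-1}\|\sum c_n v_{a_n}\|$, and since $a_1<q_1<a_2<q_2<\cdots$ (here skippedness is used), \eqref{E:6.4.1} gives $\|\sum c_n v_{a_n}\|\ge C^{-1}\|\sum c_n v_{q_n}\|$; as $\psi$ is isometric, $(z_n)$ $C_XC$-dominates $(v_{q_n})$. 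In the second case one chooses, as in Theorem~\ref{T:6.1}, $z^*_n\in\oplus_{j\in(p_n,q_n)}\Fb^*_j$ with $z^*_n(z_n)$ bounded below and $z^*_n|_{\psi(X)}=0$, and an analogue of Lemma~\ref{L:6.3} produces, for every $w^*=\sum\beta_jv^*_{q_j}\in D^V$, an element $\gammab\in\Lambda$ with $e^*_{\gammab}\bigl(\sum\alpha_jz_j\bigr)=c\sum\alpha_j\beta_j z^*_j(z_j)$; choosing $w^*$ nearly norming $\sum\alpha_j v_{q_j}$ yields the same domination. A general finite combination $\sum_{n\in F}a_nz_n$ is treated by splitting $F$ according to whether $z_n$ is close to or far from $\psi(X)$ and recombining via the $1$-unconditionality of $(v_i)$; after fixing a suitable sparse subsequence $V'$ of $(v_i)$ this shows $\bFb$ satisfies skipped subsequential $V'$-lower estimates, and hence $X$ embeds into the $\cL_\infty$ space $Z$ as required.

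I expect the main obstacle to be making the last step work for \emph{all} skipped block sequences with a single constant: blocks that are partly close to and partly far from $\psi(X)$ are witnessed by disjoint families of functionals (those in $\Gamma$, which see $\psi(X)$ through $\pi$, versus those in $\Lambda$, which vanish on $\psi(X)$), so the perturbation-and-splitting bookkeeping and the matching of the indices $a_n$ and $q_n$ against the fixed subsequence $V'$ must be organised with care. It is precisely here that the skippedness and the linear gap condition \eqref{E:6.1.1a} are essential, since they simultaneously provide the room required by the augmenting functionals and make \eqref{E:6.4.1} applicable for the passage from $a_n$ to $q_n$.
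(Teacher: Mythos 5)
Your proposal reproduces the scaffold of Theorem~\ref{T:6.1} but makes the wrong choice at the step that the whole theorem hinges on: you take the $B_{(k,n]}$'s from $\{b^*:P^{\bFb^*}_{(k,n]}(b^*)|_{\psi(X)}\equiv 0\}$ (the ``$X$ has an FDD'' option), whereas the paper's proof of Theorem~\ref{T:6.5} explicitly uses the ``no assumptions on $X$'' option: $B_{(k,n]}$ is a dense symmetric subset of the \emph{full} ball $B_{\ell_1(\Gammab_n\setminus\Gammab_k)}$. If the augmenting functionals vanish on $\psi(X)$, the $\Lambda$-functionals can only witness lower estimates for blocks far from $\psi(X)$; for blocks close to $\psi(X)$ you fall back on the $\bE$-lower estimates and $\pi$. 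Your plan is then to split a general block sequence into ``close'' and ``far'' indices, estimate each half separately, and recombine via the unconditionality of $(v_i)$. But the recombination step forces you to bound $\|\sum_{n\in F_i}a_n z_n\|$ above by a constant times $\|\sum_{n\in F}a_n z_n\|$ for a \emph{non-interval} set $F_i\subset F$, and $\bFb$ is only bimonotone, not unconditional, so these projections are unbounded. You flag exactly this as ``the main obstacle'', and indeed as stated it is not a bookkeeping issue but a genuine gap: the dichotomy argument cannot produce a single constant valid for all skipped block sequences, which is what ``$(\Fb_i)$ satisfies skipped subsequential $V'$-lower estimates'' requires.

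The paper's proof avoids the dichotomy entirely. The $b^*$'s range over dense subsets of the full ball, the $\Theta$-sets retain the filtering condition $|c^*_\gammab(\psi(x))|\le\|x\|$, and the analogue of Lemma~\ref{L:6.3} (Lemma~\ref{L:6.5}) is reproved \emph{without} assuming $|z^*_j(\psi(x))|\le\delta_j\|x\|$. The point is that the filtering condition is now verified for the $\gammab_j$'s produced by the induction, via the chain
\begin{equation*}
|c^*_{\gammab_j}(\psi(x))|\le\sum_n c|\beta_n|\,\big\|P^{\bFb}_{(m_{p_n},m_{q_n})}(\psi(x))\big\|
\le c\Big\|\sum_n\big\|P^{\bFb}_{(m_{p_n},m_{q_n})}(\psi(x))\big\|\,\tilde v_{m_{q_n}}\Big\|
\le cK\|x\|,
\end{equation*}
which uses the $1$-unconditionality of $(\tilde v_i)$, the index condition \eqref{E:6.4.1} (to insert skipped terms at the $\tilde v_{m_{p_{n+1}}}$), and the subsequential $V$-lower estimate \eqref{E:6.4.2} for $\bE$ in $X$. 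You do use \eqref{E:6.4.1} in your Case~1, but you miss that \eqref{E:6.4.1}-plus-the-$X$-lower-estimate is precisely what licenses taking arbitrary $b^*$'s; that observation is what makes the argument uniform over all skipped blocks and is the content of Theorem~\ref{T:6.5}. A smaller discrepancy: rather than relabelling so that $E_j\subset F_j$, the paper works with $\tilde V\cong V\oplus c_0$, $\tilde v_{m_i}=v_i$, to line the $m_i$-grid up with the $\tilde V$-indices; this matters because the skipping in \eqref{E:6.4.1} and the passage from $p_n$ to $q_n$ have to be matched to the $m_i$'s, and the $c_0$ padding makes that bookkeeping painless while preserving unconditionality and (when needed) shrinkingness.
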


\begin{proof}
After renorming, we may assume that the FDD $\bE=(E_i)$ is bimonotone
and that the basis $(v_i)$ is 1-unconditional. We use the
construction of Section \ref{S:4} to define a $\cL_\infty$ space $Y$
with an FDD $\bF=(F_i)$ and an embedding $\phi:X\rightarrow Y$ such that
$\phi(E_i)\subset F_{m_i}$ for some sequence
$(m_i)\in[\N]^{\omega}$. For convenience, we will refer to the space
$\phi(X)$ as $X$. As the FDD $(E_i)$ satisfies  subsequential
$V$-lower estimates, there exists
$K\geq1$, so that
\begin{align}\label{E:6.4.2}
&\text{if $(x_i)\subset X$ is a normalized  block sequence
such that $x_i\in\oplus_{j=m_{p_i}}^{m_{q_{i}}}F_{j}$,}\\
&\text{ with $1=p_1<q_1<p_2,\ldots$, then $(x_i)$
$K$-dominates $(v_{q_i})$.}\notag
\end{align}
  We now define the Banach space
$\tilde{V}\cong V\oplus c_0$ with basis $(\tilde{v}_i)$ given by
 $\tilde{v}_{m_i}=v_{i}$  and $\tilde v_i=e_i$ if $i\not\in\{m_j\}$,
where $(e_i)$ is the unit vector basis of $c_0$.  It is clear that
$(\tilde{v}_i)$ is a 1-unconditional normalized basic sequence, and
that $(\tilde{v}_i)$ is shrinking if $(v_i)$ is
shrinking.

 We denote
the projection constant of $(F_i)$ by $M$. The sets $(\Deltab_n)$,
$\Theta^{(0,1)}$, $\Theta^{(0,2)}$, $\Theta^{(1,1)}$, and
$\Theta^{(1,2)}$ are defined as in Theorem \ref{T:6.1} for
some constant $c<1/{K}$, the  basic sequence $(\tilde v_i)$, and some inductively
chosen   $\vp_{n+1}/(2M+4)$-dense sets
$B_{(k,n]}\subset B_{\ell_1(\Gammab_n\setminus\Gammab_k)}$
(i.e. we are using the case "no assumptions on $X$").  This
construction  yields that $(\Deltab_n)$ admits an associated
Bourgain-Delbaen space $Z$ with FDD $\bFb=(\Fb_j)$
       whose decomposition\ constant $\Mb$ is  not larger than  $\max(M, 1/(1-2c))\le \max (M,2)$.
       If $(F_j)$ and
       $(v_n)$ are both shrinking in $V$, and thus, the optimal $c$-decompositions of elements of $B_{\tilde{V}^*}$  are admissible with respect to some
        compact subset of $[\N]^{<\omega}$, we have that the FDD $\bFb=(\bFb)$  is shrinking in
        $Z$.  Furthermore, we have an isometric embedding $\psi: X\rightarrow Z$.

Before continuing, we need the following lemma which is analogous to
Lemma \ref{L:6.3}.
\renewcommand{\qedsymbol}{}
             \end{proof}

         \begin{lem}\label{L:6.5}
       Let $(z^*_j)$ be a   block basis in $Z^*$ with respect to
       $\bFb^*$ such that there exist integers $p_1<q_1<p_2<q_2...$
       with
$\supp_{\bFb^*}(z^*_n)\subset(m_{p_{n}},m_{q_{n}})$ for all
$n\in\N$. Assume that
$$
 z^*_n=P^{\bFb*}_{(m_{p_n},m_{q_n})}(\tilde z_n^*)\text{
for some $\tilde z_n^*\in B_{(m_{p_{n}},m_{q_n})}$,}, \text{ for $n\in\N$}.
$$
           Then for any  sequence  $(\beta_j)_{j=1}^N$ with $w^*=
\sum_{j=1}^{N}\beta_j v^*_{{q_j}} \in D^V$, there exists
        $\gammab\in \Lambda_{N+ k_N }$  so that
       \begin{equation}  \label{E:6.5.0a} P^{\bFb^*}_{(m_{p_n},m_{q_n})}(e^*_\gammab)\!=\!c \beta_n z^*_n,  \text{ if $n\!\le\!N$, and } P^{\bFb^*}\!\!(e^*_\gammab)(\psi(x))\!=\!\sum_{n=1}^N c \beta_n z^*_n(\psi(x))
       \text{ if $x\kin X$}  .
        \end{equation}
              \end{lem}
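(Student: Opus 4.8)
The plan is to prove Lemma~\ref{L:6.5} by induction on $N$, following the proof of Lemma~\ref{L:6.3} step by step and making the substitutions dictated by the construction in the proof of Theorem~\ref{T:6.5}: the basis $(v_i)$ is replaced throughout by the basis $(\tilde v_i)$ of $\tilde V$ (so that $v^*_{q_j}$ is literally $\tilde v^*_{m_{q_j}}$, and the set denoted $D^V$ in the statement is the set $D$ of Lemma~\ref{L:4.1} built for $\tilde V$), and the indices $p_n,q_n$ appearing in Lemma~\ref{L:6.3} are replaced everywhere by $m_{p_n},m_{q_n}$, where $(m_i)$ is the sequence of Theorem~\ref{T:4.9} along which $\phi(E_i)\subset F_{m_i}$. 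One first observes that the separation condition $q_n+n<p_{n+1}$ which had to be assumed in Lemma~\ref{L:6.3} is now automatic: since $m_{i+1}=m_i+i$ and $q_n\ge 2n$, the inequality $q_n<p_{n+1}$ already forces $m_{q_n}+n<m_{q_n+1}\le m_{p_{n+1}}$, which is exactly the nesting $\etab_j\in\Lambda_{m_{p_{N_j+1}}-1}$ needed in the chaining below.

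For $N=1$ we have $w^*=\pm\tilde v^*_{m_{q_1}}$, hence $(\pm\tilde v^*_{m_{q_1}})\in\Gamma^{\tilde V}$; set $\gammab=(m_{q_1},\pm c,\pm\tilde z^*_1)$, which lies in $\Theta^{(0,1)}_{m_{q_1}}$ because $\tilde z^*_1\in B_{(m_{p_1},m_{q_1})}$. Then $e^*_\gammab=d^*_\gammab\pm c\,\tilde z^*_1$, so $P^{\bFb^*}_{(m_{p_1},m_{q_1})}(e^*_\gammab)=c\beta_1 z^*_1$, and since $d^*_\gammab$ vanishes on $\psi(X)$ by Proposition~\ref{P:5.2}, the second assertion of \eqref{E:6.5.0a} follows. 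For the inductive step, take the special $c$-decomposition $(r_1w^*_1,\ldots,r_\ell w^*_\ell)$ of $w^*$ (so $\ell\ge 2$), write $w^*_j=\sum_{i=N_{j-1}+1}^{N_j}(\beta_i/r_j)\,\tilde v^*_{m_{q_i}}$ with $0=N_0<N_1<\cdots<N_\ell=N+1$, apply the induction hypothesis to each $w^*_j$ (of length $\le N$) to get $\etab_j\in\Lambda$ with $P^{\bFb^*}_{(m_{p_n},m_{q_n})}(e^*_{\etab_j})=c(\beta_n/r_j)z^*_n$ for $N_{j-1}<n\le N_j$, and then chain these into a sequence $\gammab_1<\gammab_2<\cdots<\gammab_\ell$ exactly as in Lemma~\ref{L:6.3}: $\gammab_j$ is a weight-$cr_j$ element of $\Theta^{(0,1)}$ or $\Theta^{(1,1)}$ carrying $\pm\tilde z^*_{N_j}$ when $|\supp(w^*_j)|=1$, and an element of $\Theta^{(0,2)}$ or $\Theta^{(1,2)}$ carrying $e^*_{\etab_j}$ with scalar $r_j\le c$ otherwise; finally $\gammab=\gammab_\ell$. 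An induction on $j$ then gives $P^{\bFb^*}_{(m_{p_n},m_{q_n})}(e^*_{\gammab_j})=c\beta_n z^*_n$ for $n\le N_j$ and $e^*_{\gammab_j}(\psi(x))=\sum_{n\le N_j}c\beta_n z^*_n(\psi(x))$ for $x\in X$, and the rank estimate $\gammab\in\Lambda_{N+k_N}$ results from the same counting as in Lemma~\ref{L:6.3}.

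The step I expect to be the main obstacle is certifying that each $\gammab_j$ really belongs to $\Theta^{(1,1)}$ or $\Theta^{(1,2)}$, that is, verifying the side condition $|c^*_{\gammab_j}(\psi(x))|\le\|x\|$ for all $x\in X$ that is built into those sets. In Lemma~\ref{L:6.3} this was handed over by the extra hypothesis $\sum_j\delta_j\le 1$ with $|z^*_j(\psi(x))|\le\delta_j\|x\|$, which is absent here; instead one has to exploit the subsequential $V$-lower estimates on $(E_i)$. Concretely: $c^*_{\gammab_j}(\psi(x))$ telescopes to $c\sum_{n\le N_j}\beta_n z^*_n(\psi(x))$; since $z^*_n$ is supported on the $\bFb^*$-interval $(m_{p_n},m_{q_n})$ while $\psi(E_i)\subset\Fb_{m_i}$, the functional $g_n:=z^*_n\circ\psi|_X$ factors through $P^{\bE}_{(p_n,q_n)}$, so the $(g_n)$ form a uniformly bounded skipped block sequence of $(E^*_i)$ whose minimal supports $r_n$ satisfy $r_1<q_1<r_2<q_2<\cdots$; by Proposition~\ref{P:OSZ1} the subsequential $V$-lower estimates on $(E_i)$ translate into subsequential $V^*$-upper estimates on $(E^*_i)$, whence $\bigl\|\sum_{n\le N_j}\beta_n g_n\bigr\|$ is controlled by a constant multiple of $\bigl\|\sum_n\beta_n v^*_{r_n}\bigr\|$, and by the dual form of \eqref{E:6.4.1} this is at most $C\bigl\|\sum_n\beta_n v^*_{q_n}\bigr\|=C\|w^*\|\le C$; the choice $c<1/K$, with $K$ taken large enough to absorb the constant from Proposition~\ref{P:OSZ1}, the uniform bound on $\|g_n\|$, and this $C$, then yields $|c^*_{\gammab_j}(\psi(x))|\le\|x\|$. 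Lining up these constants is the delicate point; everything else is a faithful transcription of the proof of Lemma~\ref{L:6.3}.
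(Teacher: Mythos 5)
Your proof is correct and follows the same inductive scaffolding as the paper's (base case in $\Theta^{(0,1)}$, special $c$-decomposition, chaining the $\gammab_j$'s, same bookkeeping with the $\etab_j$'s), and your observation that the separation condition $m_{q_n}+n<m_{p_{n+1}}$ is automatic from $m_{i+1}=m_i+i$ and $q_n<p_{n+1}$ is exactly right. Where you genuinely diverge from the paper is the one step you correctly flagged as the crux, namely certifying $|c^*_{\gammab_j}(\psi(x))|\le\|x\|$ so that $\gammab_j$ lands in $\Theta^{(1,1)}$ or $\Theta^{(1,2)}$. The paper stays on the $X$ side: it bounds each $|z^*_n(\psi(x))|$ by $\|P^{\bFb}_{(m_{p_n},m_{q_n})}(\psi(x))\|$, pairs these coefficients against $\sum\beta_n v^*_{q_n}\in B_{\tilde V^*}$, uses $1$-unconditionality of $(\tilde v_i)$ to insert the missing filler blocks $P^{\bFb}_{[m_{q_n},m_{p_{n+1}}]}(\psi(x))$ so that the resulting partition is a full block decomposition of $\psi(x)$, and then invokes the lower estimate \eqref{E:6.4.2} directly to get $\le cK\|x\|$. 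You instead dualize: factor $g_n=z^*_n\circ\psi|_X$ through $\oplus_{i\in(p_n,q_n)}E^*_i$, pass from subsequential $V$-lower estimates on $(E_i)$ to subsequential $V^*$-upper estimates on $(E^*_i)$ via Proposition~\ref{P:OSZ1}, control $\|\sum\beta_n g_n\|$ by $\|\sum\beta_n v^*_{r_n}\|$ with $r_n=\min\supp_{\bE^*}(g_n)\in(p_n,q_n)$, and finish with the dual form of \eqref{E:6.4.1} (the interleaving $r_1<q_1<r_2<q_2<\cdots$ is exactly the hypothesis needed). Both routes are sound. The paper's argument is more hands-on and yields the specific constant $K$ of \eqref{E:6.4.2}; yours is conceptually cleaner (everything happens in $X^{(*)}$, no filler blocks), at the cost of stacking two constants (Proposition~\ref{P:OSZ1} plus the $C$ of \eqref{E:6.4.1}) that must be folded into the choice of $c$, which you explicitly acknowledge. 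That re-bookkeeping is harmless since the proof of Theorem~\ref{T:6.5} only requires \emph{some} $c$ small relative to the ambient constants, chosen before the augmentation is built.
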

Since  parts of the proof  are essentially the same as the proof of Lemma \ref{L:6.3} we will only sketch it and point out where
both proofs differ.
       \begin{proof}[Sketch]We will prove our claim by induction on $N$ and the case $N=1$ is exactly like in the proof of Lemma \ref{L:6.3}
       (with $p_j$ and $q_j$ being replaced by $m_{p_j}$ and $m_{q_j}$, respectively). To show the claim
        for $N+1$, assuming the claim to be true for $N$, we let
         $w^*=\sum_{j=1}^{N+1} \beta_j \tilde v_{m_{q_j}}\!=\!
         \sum_{j=1}^{N+1} \beta_j  v_{{q_j}}\kin D^{\tilde V}$, and
         define $\ell\kin\N$, $\ell\ge 2$ and $\gammab_j$ and $\etab_j$, $j\!=\!1,2\ldots,\ell$, as in Lemma \ref{L:6.3}. We need only to show
         by induction on $j=1,2\ldots \ell$, that  $|e^*_{\gammab_j}(\psi(x))|\le \|x\|$ for $x\in X$ (without the assumption of
         Lemma \ref{L:6.3} that
         $|z^*_j(\psi(x))|\le\delta_j\|x\|$, for $j\le \ell$).
          Using the induction hypothesis on the $\etab_j$'s, we deduce by
induction on   $j=1,\ldots \ell$ that for $x\in X$
\begin{align*}
|e^*_{\gammab_j}(\psi(x))|=&|c^*_{\gammab_j}(\psi(x))|\\
 &\leq \sum_{n=1}^{N_j}
|c\beta_n z^*_n(\psi(x))|\\
 &\leq \sum_{n=1}^{N_j}
c|\beta_n| \big\|P^{\bFb}_{(m_{p_{n}},m_{q_{n}})}(\psi(x))\big\|\\
 &= c\left(\sum_{n=1}^{N_j}
\beta_n v^*_{q_n}\right)\left(\sum_{n=1}^{N_j}\|P^{\bFb}_{(m_{p_{n}},m_{q_{n}})}(\psi(x))\|v_{q_n}\right)\\
&\leq c\Big\|\sum_{n=1}^{N_j}\|P^{\bFb}_{(m_{p_{n}},m_{q_{n}})}(\psi(x))\|\tilde v_{m_{q_n}}\Big\|\\
&\leq  c\Big\|\sum_{n=1}^{N_j}\big(\|P^{\bFb}_{(m_{p_{n}},m_{q_{n}})}(\psi(x))\|\tilde v_{m_{q_n}} + \|P^{\bFb}_{[m_{q_{n}},m_{p_{n+1}}]}(\psi(x))\| \tilde v_{m_{p_{n+1}}}  \big)   \Big\|\\
&\le cK\|x\|\le\|x\|
\end{align*}
(in the penultimate line we use  the 1-unconditionality of $(\tilde v_j)$ and in the case of $j=\ell$ we
put $p_{_{N_\ell+1}}=m_{q_{_{N_{\ell}+1}}}$,
for the last line  we use \eqref{E:6.4.2})
and thus $\gammab_1\in \Theta_{m_{q_1}}^{(0,1)}$, if
$|\supp(w^*_1)|=1$, and $\gammab_1\in
\Theta_{m_{p_{_{N_1+1}}}}^{(0,2)}$, if $|\supp(w^*_1)|>1$, and
  $\gammab_j\in \Theta_{m_{q_{_{N_j}}}}^{(1,1)}$, if  $|\supp(w^*_1)|=1$,  and $\gammab_j\in \Theta_{m_{q_{_{N_j}}}+N_j-N_{j-1}+1}^{(1,2)}$, if  $|\supp(w^*_1)|>1$, if $j=2,3\ldots \ell$.
  We put then $\gammab=\gammab_\ell$, and the rest of the proof follows again like in Lemma \ref{L:6.3}        .
       \end{proof}

       \begin{proof}[Continuation of  the Proof of Theorem \ref{T:6.4}] To finish the proof
           we consider a normalized block basis  $(z_n)$ in $Z$ such that there exists sequences $p_1<q_1<p_2<q_2\ldots$
       with
$\supp_{\bFb}(z_n)\subset(m_{p_{n}},m_{q_{n}})$ for all $n\in\N$.
 We choose $z^*_n\in\oplus_{j\in(p_n,q_n)} \Fb^*_j$, with
   $\|z^*_n\|\le1$, $      z^*_n(z_n)\ge \frac{1}{2\Mb}$.

           By Proposition \ref{P:1.4b} there exists
           $b^*_n\in \ell_1(\Gammab_{q_n-1}\setminus \Gammab_{p_n})$, for $n\in\N$
           so that $\|b^*_n\|_{\ell_1}\le \Mb$ and  $z^*_n=P_{(p_n,q_n)}^{\bFb^*}(b^*_n)$.
           Using the density assumption of $B_{(p_n,q_n)}$,  we choose
           $\tilde b_n^*\in B_{(p_,q_n)}$ with $\|\tilde b_n^*-\frac{1}{\Mb}b^*_n\|\le \vp_{q_n}/(2M+4)\le\vp_{q_n}/2\Mb$, since $\Mb\le M\vee2$.
           So if we let $\tilde z^*_n=P_{(p_n,q_n)}^{\bFb^*}(\tilde b^*_n)$, we deduce
           that $\|z^*_n/\Mb-\tilde z^*_n\|\leq 2\Mb\vp_{q_n}/2\Mb=\vp_{q_n}$ and hence $\tilde z^*_n(z_n)\ge z^*_n(z_n)/\Mb -\|z^*_n/\Mb-\tilde z^*_n\|\ge 1/\Mb- \vp_n$, for all $n\in\N$.

            Let  $(\alpha_j)_{j=1}^N\subset \R$ with $\|\sum_{j=1}^N \alpha_j v_{q_j}\|=1$
            and using Lemma \ref{L:4.1} (in the unconditional case)  we can choose
            $(\beta_ j)_{j=1}^N\subset \R$ with $\sum_{j=1}^N \beta_j v^*_{q_j}\in D^V$
             so that
             $$\sum_{j=1}^N \beta_j v^*_{q_j}\Big(\sum_{j=1}^N \alpha_j v_{q_j})= \sum_{j=1}^N \alpha_j\beta_j\ge ( 1-\vp).$$

           Since $(p_n)$ and $(q_n)$ satisfy the assumptions of Lemma \ref{L:6.3}
           (recall that $m_{j+1}=j+m_j$), we can choose
             $\gammab\in\Lambda$ so that
           $$ e^*_\gammab \Big(\sum_{j=1}^N \alpha_j  z_j\Big)=
            \sum_{j=1}^N  \alpha_j \beta_j P^{\bFb^*}_{(p_j,q_j)} (e^*_\gammab)(z_j)=
                     c\sum_{j=1}^N  \alpha_j   \beta_j z^*_j(z_j)\ge c(1-\vp)(1/\Mb-\vp),
      $$
           which gives that $(z_n)$
dominates $(v_{q_n})$.  Thus we may block the FDD $(\Fb_i)$ to
achieve the theorem.
 \end{proof}

\end{document}